\font\tenmsb=msbm5    \textfont\msbfam=\tenmsb \font\sevenmsb=msbm5
\font\fivemsb=msbm5
\font\tenbig=msbm5 scaled \magstep2   \textfont\bigfam=\tenbig
\font\sevenbig=msbm7 scaled \magstep2 \scriptfont\bigfam=\sevenbig
\font\fivebig=msbm5 scaled \magstep2
\numberwithin{equation}{section}
\newtheorem{theorem}{Theorem}[section]
\newtheorem{lemma}{Lemma}[section]
\newtheorem{remark}{Remark}[section]
\newtheorem{definition}{Definition}[section]
\begin{document}
	\title{\bf  New Multilinear Littlewood--Paley  $g_{\lambda}^{*}$ Function and Commutator on  Weighted Lebesgue Spaces}
	\author{\bf Huimin Sun, Shuhui  Yang and Yan Lin$^*$}
	\renewcommand{\thefootnote}{}
	\date{}
	\maketitle
	\footnotetext{2020 Mathematics Subject Classification. 42B25, 42B35}
	\footnotetext{Key words and phrases. New multilinear Littlewood--paley function, $A_{\vec p}^{\theta }(\varphi )$ weight, weighted Lebesgue 
		space, multilinear commutator, multilinear iterative commutator. }
	\footnotetext{This work was partially supported by the National 
		Natural Science Foundation of China No. 12071052.}
	\footnotetext{$^*$Corresponding author, E-mail:linyan@cumtb.edu.cn}
	\begin{minipage}{13.5cm}
		{\bf Abstract}
		\quad
		Via  the new weight function $A_{\vec p}^{\theta }(\varphi )$, the authors  introduce a new class of multilinear Littlewood--Paley $g_{\lambda}^{*}$ functions and establish the boundedness on weighted Lebesgue spaces. In addition, the authors obtain the boundedness of the multilinear commutator and multilinear iterated commutator generated by the multilinear Littlewood--Paley $g_{\lambda}^{*}$ function and the new $BMO$ function on weighted Lebesgue spaces. The results in this article include the known results in \cite{XY2015,SXY2014}. When $m=1$, that is, in the case of one-linear, our conclusions are also new, further extending the results in \cite{S1961}.
	\end{minipage}
	
\section{Introduction}\label{sec1}
  \quad\quad Weighted inequalities play an important role in harmonic analysis by providing important estimates for nonlinear partial differential equations and vector-valued inequalities. Recall that the classical Muckenhoupt’s $A_{p}$ weight functions originated in Muckenhoupt’s work in \cite{M1972}. Many scholars have shown great interest in this topic. We refer the reader also to \cite{CMP2004,GR1985,D2000,G1983,J1986,R1984} for some studies on the Muckenhoupt’s $A_{p}$ weight functions. 
    
Let $\omega(x) \in L_{loc}^1(\mathbb{R}^n)$ and be nonnegative. A weight $\omega$ is said to be of class  
$A_p$ with $1<p<\infty$, if and only if 
there exists a positive constant $C$ such that for any cube $Q$,
\begin{align*}
	\left(\frac{1}{|Q|}\int_Q\omega(y)dy\right)^{\frac{1}{p}}
	\left(\frac{1}{|Q|}\int_Q\omega(y)^{\frac{-1}{p-1}}dy\right)^
	{\frac{1}{p'}}\leq C,
	\end {align*}
	where $p'$ satisfies $ 1/p+1/p'=1$.
	When $ p=1 $, $\omega \in A_1$ if and only if there exists a positive constant 
	$C$ such that
	\begin{align*}
	\left(\frac{1}{|Q|}\int_Q\omega(y)dy\right)\leq C\mathop{\inf}\limits_{x\in 
	{\rm Q}}\omega(x).
	\end {align*}
  	
  Based on the classical Muckenhoupt’s class weight functions, many authors have studied the extrapolation of weights. $A_{p}$ extrapolation theorem was first proved by Rubio de Francia in \cite{R1984}. Later, the $A_{p}$ extrapolation theorem is further reduced and generalized by Cruz-Uribe \cite{CMP2004} et al. in 2004. 
  In 2012, Tang \cite{T2012} derived weighted norm inequalities for smooth symbolic pseudo-differential operators and their commutators by introducing a new weighted function $A_{p}^{\infty}(\varphi)$. For some other studies of $A_{p}^{\infty}(\varphi)$, one can see \cite{GZ2019,HZ2018,LZCZ2016,T2014} and related references.
  	
  	We say that $\omega \in A_p^{\theta}(\varphi)$ with $ 1< p< \infty $  and $\theta\geq0$, if and only if there exists a 
  	positive constant $C$ such that
  	\begin{align*}
  	\left(\frac{1}{\varphi(Q)^{\theta}|Q|}\int_Q\omega(y)dy\right)^{\frac{1}{p}}
  	\left(\frac{1}{\varphi(Q)^{\theta}|Q|}\int_Q\omega(y)^{\frac{-1}{p-1}}dy\right)^
  	{\frac{1}{p'}}\leq C,
  	\end {align*}
  	for all cubes $Q$. Especially, when $p=1$, we obtain
  	\begin{align*}
  	\left(\frac{1}{\varphi(Q)^{\theta}|Q|}\int_Q\omega(y)dy\right)\leq 
  	C\mathop{\inf}\limits_{x\in {\rm Q}}\omega(x),
  	\end {align*}
  	where $Q=Q(x,r)$ is a cube with $x$ as the center and $r$ as the sidelength,  $\varphi(Q):=1+r$, $|E|$ denotes the Lebesgue measure of $E$ and $\chi_E$ represents the characteristic function of $E$. Moreover, $f_Q$ is used to denote the average, given by $f_Q:=\frac{1}{|Q|} \int_Qf(y)dy$.
  	\begin{remark}
  		Let $A_p^{\infty}(\varphi):= {\textstyle \bigcup_{\theta \ge 0}} 
  		A_p^{\theta}(\varphi)$ and $A_{\infty}^{\infty}(\varphi):= {\textstyle \bigcup_{p\ge 1}} A_p^{\infty}(\varphi)$. The class $A_p^{\theta}(\varphi)$ is 
  		strictly larger than the class $A_p$ for all $1\leq p<\infty$. Particularly, when $ \theta =0$, $A_p^0(\varphi)$ is equivalent to the Muckenhoupt's class $A_p$ for all $1\leq p<\infty$. Moreover, for $ \omega (x):=1+|x|^\gamma $, $ \gamma>n(p-1)$, then $\omega (x)\in A_p^{\infty}(\varphi)$, but $\omega (x)\notin A_{p}$.
  	\end{remark}
  		
   In \cite{1GT2002}, Grafakos and Torres raised the question:“Is there a multiple weight theory? The most appropriate multilinear maximal function and multiple weights to work with in this direction have not been yet clear.” In 2009, Lerner \cite{LOP2009} et al. gave the following definition of multiple weights in order to answer the question. Let $1\leq p_{j} < \infty$, $j=1,\cdots ,m$, and $1/p=1/p_{1}+\ldots+1/p_{m}$. Given $\vec{\omega}=(\omega_1,\cdots,\omega_m)$, set $v_{\vec{\omega}}=\prod_{j=1}^{m}\omega_{j}^{\frac{p}{p_{j}}}$, $\vec{\omega}\in A_{\vec{p}}$ if and only if
    \begin{align*} 
	\sup_{Q}\left(\frac{1}{|Q|}\int_{ Q}\nu_{\vec\omega}(y)dy\right)^{\frac{1}{p}}\prod_{j=1}^{m}\left(\frac{1}{|Q|}\int_{ Q}\omega_{j}^{1-p_{j}'}(y)dy\right)^{\frac{1}{p_{j}'}}< \infty,
    \end{align*}
where the supremum is taken over all cubes $Q\subset \mathbb{R}^n$. When 
$p_j=1$, $j=1,\ldots, m$, the term $\left(\frac{1}{|Q|}\displaystyle \int_Q 
\omega_j(y)^{1-p'_j}dy\right)^{\frac{1}{p'_j}}$ is considered to be 
$\left(\mathop{\inf}\limits_{y\in {\rm Q}}\omega_j(y)\right)^{-1}$.

 In recent years, scholars have increasingly focused their attention on weighted inequalities. In \cite{B2015}, Bui et al. gave the following definition of $A_{\vec{p}}^{\infty}(\varphi)$. Other studies on $A_{\vec{p}}^{\infty}(\varphi)$ include some related literature (see, for instance, \cite{1ZZ2021,BHS2011,HZ2023,YLL,LYL}). 

We refer to $\vec\omega\in A_{\vec{p}}^{\theta}(\varphi)$, if and only if
\begin{align*}
	\mathop{\sup}\limits_{\rm Q} 
	\left(\frac{1}{\varphi(Q)^{\theta}|Q|}\int_Qv_{\vec{\omega}}(x)dx\right)^{\frac1p}
	\prod_{j=1}^{m}\left(\frac{1}{\varphi(Q)^{\theta}|Q|}\int_Q\omega_j(x)^{1-p'_j}dx\right)^{\frac{1}{p'_j}}<\infty,
	\nonumber
\end{align*}
where the supremum is taken over all cubes $Q\subset \mathbb{R}^n$ and  
$\theta\geq0$. When $p_j=1$, $j=1,\ldots, m$, the term 
$\left(\frac{1}{|Q|}\displaystyle \int_Q  
\omega_j(x)^{1-p'_j}dx\right)^{\frac{1}{p'_j}}$ is considered to be 
$\left(\mathop{\inf}\limits_{x\in {\rm Q}}\omega_j(x)\right)^{-1}$.   
 
	\begin{remark}
	Denote $A_{\vec{p}}^{\infty}(\varphi):= {\textstyle \bigcup_{\theta \ge 0}} 
	A_{\vec{p}}^{\theta}(\varphi)$ for $1\leq p_{j}< \infty$, $j=1,\cdots ,m$. When $\theta=0$, the class 
	$A_{\vec{p}}^{0}(\varphi)$ is considered to be the class of multiple weights 
	$A_{\vec{p}}$ introduced by {\rm \cite{LOP2009}}.
	\end{remark}

 Now, we recall some background of multilinear operators. The development of the multilinear Calder\'{o}n--Zygmund theory can be traced back to the works of Coifman and Meyer in the 1970s (see, for instance, \cite{1CM1978,2CM1978,CM1975}). The multilinear Calder\'{o}n--Zygmund operator with standard kernel was reserached by Grafakos et al.(see, for instance, 
 \cite{2GT2002,3GT2002}). Furthermore, there have been several studies pertaining to multilinear Calder\'{o}n--Zygmund operators with nonstandard kernels (see, for instance, \cite{DJ1984,LZ2014,Y1985,ZS2019}).
 
  With the study of the new class of multiple weights $A_{\vec{p}}^{\infty}(\varphi)$, multilinear Calder\'{o}n--Zygmund operators are further studied. In \cite{PT2015}, Pan and Tang successfully obtained a class of multilinear Calder\'{o}n--Zygmund operators with classical kernels on weighted Lebesgue spaces with weights in $A_{\vec{p}}^{\infty}(\varphi)$. They also established the boundedness of the iterated commutators generated by the new $BMO$ functions and these operators. In \cite{1ZZ2021}, Zhao and Zhou proved the boundedness of a class of operators with classical kernels on Morrey spaces with the weight of $A_{\vec{p}}^{\theta}(\varphi)$. The boundedness of commutators and iterated commutators on Morrey spaces was also obtained.
 At the same time, Zhao and Zhou \cite{2ZZ2021} introduced a class of multilinear Calder\'{o}n--Zygmund operators with Dini's type kernels. The strong and weak boundedness on weighted Lebesgue spaces was also proved. 
  
The study of multilinear theory has developed significant attention due to the rapid development of harmonic analysis. In 1978, Coifman and Meyer \cite{2CM1978} introduced the Littlewood--Paley $g$-functions. In the past few decades, many scholars have done a series of studies on Littlewood–Paley $g$-functions. Recently, with the study of new weights of $A_{\vec{p}}^{\infty}(\varphi)$, the Littlewood--Paley $g$-functions is further studied. In \cite{LYL,YLL}, Li, Yang and Lin introduced a new class of  multilinear square operators with generalized kernels and studied the boundedness of these operators and multilinear commutators on weighted Lebesgue spaces and weighted Morrey sapces.

 As we all know, the sublinear operators $g_{\lambda}^{*}$ are a variant of classical Littlewood--Paley $g$-functions, which play a very important role in harmonic analysis and PDE. In 1961, Stein \cite{S1961} first defined classical $g_{\lambda}^{*}$ operators with higher dimensions and proved that if $\lambda\textgreater2$ , then $g_{\lambda}^{*}$ is of weak type $(1, 1)$ and is of strong type $(p, p)$ for $1\textless p \textless \infty$. In 1970,  Fefferman \cite{F1970} made some other estimates of $g_{\lambda}^{*}$ functions. In 2002, Ding \cite{DLY2002} et al. gave the $L^{p}$-boundeness of $g_{\lambda}^{*}$ functions. In 2007, Xue \cite{XDY2007} et al. gave the weighted weak $(1, 1)$ boundedness and the weighted $L^{p}$ boundedness for the Littlewood--Paley $g_{\lambda}^{*}$ function with complex parameters. Then, many scholars have also studied the $g_{\lambda}^{*}$ function on different function spaces (see, for instance, \cite{L2011,C2023}).

In recent years, the study of multilinear Littlewood--Paley   $g_{\lambda}^{*}$ functions has also been widely concerned. In 2014, Shi, Xue and Yabuta \cite{SXY2014} showed that if the kernel $K$ is in the form of convolution type, $K(x-y_{1} \ldots x-y_{m})$, then the $g_{\lambda}^{*}$ operator is bounded from $L^{1}\times \cdots \times L^{1}$ to $L^{\frac{1}{m},\infty}$, and has the strong weighted estimates $L^{p_1}(\omega_1)\times \cdots \times L^{p_m}(\omega_m)$ to $L^{p_{m}}(\nu_{\vec\omega})$ and  weighted weak type $L^{p_1}(\omega_1)\times \cdots \times L^{p_m}(\omega_m)$ to $L^{p,\infty}(\nu_{\vec\omega})$. In 2015, Xue \cite{XY2015} defined the $T_{\lambda}(\vec{f})$, established the  $L^{p_1}(\omega_1)\times \cdots \times L^{p_m}(\omega_m)$ to $L^{p_{m}}(\nu_{\vec\omega})$ and the weak type $L^{p_1(\omega_1)}\times \cdots \times L^{p_m}(\omega_m)$ to $L^{p,\infty}(\nu_{\vec\omega})$ estimates of $T_{\lambda}(\vec{f})$. If the kernel $K$ is a multilinear Littlewood--Paley kernel, we denote  $T_{\lambda}$ by $g_{\lambda}^{*}$.

The new multilinear Littlewood--Paley operator with classical kernel is defined as follows. Let  $ K_t(z,y_1,\ldots,y_m) $ be a locally integrable function defined away from the diagonal $z=y_1=\ldots=y_m$ in $ (\mathbb{R}^{n})^{m} $. It is said to be a classical kernel, if for some positive constants $ C $, $\gamma  $, $B\textgreater1 $ and for any $ N\ge 0 $,  it satisfies the following size condition for any $\lambda>0$.
\begin{equation}\label {1.1}	
	\left(\iint_{\mathbb{R}_{+}^{n+1}}\left(\frac{t}{|x-z|+t}\right)^{n\lambda}|K_{t}(z,\vec{y})|^{2}\frac{dzdt}{t^{n+1}}\right)^{\frac{1}{2}}\\
	\leq\frac{C}{\left(\sum_{j=1}^{m}|x-y_{j}|\right)^{mn}\left(1+\sum_{j=1}^{m}|x-y_{j}|\right)^{N}},
\end{equation}
and the smoothness condition
\begin{equation} \label {1.2}
	\begin{aligned}
	\left(\iint_{\mathbb{R}_{+}^{n+1}}\left(\frac{t}{|z|+t}\right)^{n\lambda}|K_{t}(x-z,\vec{y})-K_{t}(x'-z,\vec{y})|^{2}\frac{dzdt}{t^{n+1}}\right)^{\frac{1}{2}}\\
	\leq\frac{C|x-x'|^{\gamma}}{\left(\sum_{j=1}^{m}|x-y_{j}|\right)^{mn+\gamma}\left(1+\sum_{j=1}^{m}|x-y_{j}|\right)^{N}},
  \end{aligned}
\end{equation}
whenever $|x-x'|\leq\frac{1}{B}\max_{1\leq j \leq m}{|x-y_j|}$.

Suppose that $\omega(t):[0,\infty) \mapsto  [0,\infty)$ is a nondecreasing function with $0<\omega(t)<\infty$. One says that $\omega\in {\rm Dini}(a)$ for $a>0$, if
\begin{align*}
	[\omega]_{{\rm Dini}[a]}:=\int^{1}_{0}\frac{\omega^a(t)}{t}dt<\infty.  
\end{align*}
In addition, if $0<a_1<a_2$, Dini($a_1$) $\subset$ Dini($a_2$). 

The new operator with kernel of Dini's type is defined as follows, 
with the size condition \eqref {1.1} remaining unchanged and the smoothness condition \eqref {1.2} being changed by
\begin{equation}\label{1.3}
	\begin{aligned}
		&\left(\iint_{\mathbb{R}_{+}^{n+1}}\left(\frac{t}{|z|+t}\right)^{n\lambda}|K_{t}(x-z,\vec{y})-K_{t}(x'-z,\vec{y})|^{2}\frac{dzdt}{t^{n+1}}\right)^{\frac{1}{2}}\\
		&\leq\frac{C|x-x'|^{\gamma}}{\left(\sum_{j=1}^{m}|x-y_{j}|\right)^{mn+\gamma}\left(1+\sum_{j=1}^{m}|x-y_{j}|\right)^{N}}\omega 
		\left( \frac{|x-x'|}{\sum_{j=1}^m |x-y_{j}|}\right),
	\end{aligned}
\end{equation}
whenever $|x-x'|\leq\frac{1}{B}\max_{1\leq j \leq m}{|x-y_j|}$.

The conditions \eqref{1.3} can be further weakened, then we can obtain a class of generalized kernels.
\begin{equation}\label{1.4}	
	\begin{aligned}
		&\left(\iint_{\mathbb{R}_{+}^{n+1}}\left(\frac{t}{|z|+t}\right)^{n\lambda}\left(\int_{\left(\Delta_{k+2}\right)^{m}\backslash\left(\Delta_{k+1}\right)^{m}}\left|K_{t}(x-z,\vec{y})-K_{t}(x'-z,\vec{y})\right|^{q}d\vec{y} \right)^{\frac{2}{q}}\frac{dzdt}{t^{n+1}} \right)^{\frac{1}{2}}\\\\
		&\leq CC_k2^{\frac{-kmn}{q'}}|x-x'|^{\frac{-mn}{q'}}(1+2^k|x-x'|)^{-N},
	\end{aligned}
\end{equation}
where $\Delta_k:=Q(x', 2^{k}\sqrt{mn}|x-x'|)$ represents the cube with center at $x'$ and side length $ 2^{k}\sqrt{mn}|x-x'| $,
$k\in \mathbb{N}$, $ 
1< q\le 2$, $1/q+1/q'=1$ and  $C_k>0$. 

\begin{definition}\label{definition1.1}
	Let $K(y_0,y_1,\ldots,y_m)$ be a locally integrable function defined away 
	from the diagonal $y_0=y_1=\ldots=y_m$ in $(\mathbb{R}^n)^{m+1}$ and $ 
	K_t(z,y_1,\ldots ,y_m):=\left(\frac{1}{t^{mn}}\right)K\left(\frac{z}{t},\frac{y_1}{t},\ldots,\frac{y_m}{t}\right)$, for any $ 
	t\in (0,\infty )$ and  $ \lambda>0 $. The multilinear Littlewood--Paley  $g_{\lambda}^{*}$ function is defined by
	\begin{equation}\label{1.5}
		g_{\lambda}^{*}(\vec{f})(x):=\left(\iint_{\mathbb{R}_{+}^{n+1}}\left(\frac{t}{|x-z|+t}\right)^{n\lambda}\left|\int_{(\mathbb{R}^{n})^{m}}K_{t}(z,\vec{y})\prod_{j=1}^{m}f_{j}(y_{j})d\vec{y}\right|^{2} \frac{ dzdt}{t^{n+1}}\right)^{\frac{1}{2}}
	\end{equation}
	
	\noindent whenever $x\notin  {\textstyle \bigcap_{j=1}^{m}}  {\rm supp } f_j$ 
	and each $ f_{j} \in C_{c}^{\infty}\left(\mathbb{R}^{n}\right) $.
	
	If the following conditions are satisfied, $g_{\lambda}^{*}$  is denoted as the new multilinear Littlewood--Paley function with generalized kernel.
	
\begin{itemize}
	\item [\rm (i)] The kernel function satisfies the conditions \eqref{1.1} and \eqref{1.4}	 for any $N\geq 0$.
	
	\item [\rm (ii)]  $g_{\lambda}^{*}$ can be extended to be a 
	bounded operator from $L^{s_1}(\mathbb{R}^{n})\times \cdots \times L^{s_m}(\mathbb{R}^{n})$ to $L^{s,\infty}(\mathbb{R}^{n})$ 
	for some $ 1\le s_j\le q', j=1,\ldots,m $ with	$\frac{1}{s}=\frac{1}{s_1}+\ldots+\frac{1}{s_m}$.
\end{itemize}
	
\end{definition}

\begin{remark}
	When $ N=0 $ in the condition \eqref{1.1} and \eqref{1.2}, the kernel returns to a standard kernel which has been studied by Xue in {\rm \cite{XY2015}}.
\end{remark}

\begin{remark}
	When $ \omega (t):=t^\gamma $ for some $ \gamma > 0 $ in the condition 
	\eqref{1.3}, the new multilinear Littlewood--paley function with kernel of Dini's type is 
	the multilinear Littlewood--paley function with classical kernel.
\end{remark}

\begin{remark}
   Since the class of generalized kernels represent a further weakening of Dini's type, it follows that they can encompass Dini’s type. In fact, 
	when $C_k:=\omega(2^{-k})$, it is clear that the condition 
	\eqref{1.3} implies the condition \eqref{1.4}, for any $1< q\le 2$.
\begin{align*}
	&\left(\iint_{\mathbb{R}_{+}^{n+1}}\left(\frac{t}{|z|+t}\right)^{n\lambda}\left(\int_{\left(\Delta_{k+2}\right)^{m}\backslash\left(\Delta_{k+1}\right)^{m}}\left|K_{t}(x-z,\vec{y})-K_{t}(x'-z,\vec{y})\right|^{q}d\vec{y} \right)^{\frac{2}{q}}\frac{dzdt}{t^{n+1}} \right)^{\frac{1}{2}}\\
	&\leq \left(\int_{\left(\Delta_{k+2}\right)^{m}\backslash\left(\Delta_{k+1}\right)^{m}}\left(\iint_{\mathbb{R}_{+}^{n+1}}\left(\frac{t}{|z|+t}\right)^{n\lambda} \left|K_{t}(x-z,\vec{y})-K_{t}(x'-z,\vec{y})\right|^{q\cdot\frac{2}{q}}\frac{dzdt}{t^{n+1}}  \right)^{\frac{q}{2}}d\vec{y}\right)^{\frac{1}{q}}\\
	&\leq\left(\int_{\left(\Delta_{k+2}\right)^{m}\backslash\left(\Delta_{k+1}\right)^{m}}\left[\frac{|x-x^{'}|^{\gamma}}{(\sum_{j=1}^{m}|x-y_{j})^{mn+\gamma}(1+(\sum_{j=1}^{m}|x-y_{j})^{N})}\omega\left(\frac{|x-x'|}{\sum_{j=1}^{m}|x-y_{j}|}\right)\right]^{q}d\vec{y}\right)^{\frac{1}{q}}\\
	%&\leq \Bigg(\int_{(\Delta_{k+2})^m \setminus (\Delta_{k+1})^m}\Bigg[\frac{C}{(\mathop{\max}\limits_{1\leq j \leq 	m}|x-y_j|)^{mn}(1+\mathop{\max}\limits_{1\leq j \leq m}|x-y_j|)^N}\omega \Bigg( \frac{|x-x'|}{\mathop{\max}\limits_{1\leq j \leq m}|x-y_j|}\Bigg)\Bigg]^q d\vec{y}\Bigg)^{1/q}\\
	&\leq \left(\int_{(\Delta_{k+2})^m \setminus 
		(\Delta_{k+1})^m}\left[\frac{C}{(2^k \sqrt{n}|x-x'|)^{mn}(1+2^k 
		\sqrt{n}|x-x'|)^N}\omega \left( \frac{1}{2^k}\right)\right]^q 
	d\vec{y}\right)^{1/q}\\
	&\leq 
	CC_k2^{\frac{-kmn}{q'}}|x-x'|^{\frac{-mn}{q'}}(1+2^k|x-x'|)^{-N}.
\end{align*}
\end{remark}

In \cite{BHS2011}, B. Bongioanni et al. gave the following definition of the new $BMO$ space,  which encompasses the classical $BMO$ space in 2011. Gradually, many researchers have established weighted norm inequalities for multilinear commutators with $BMO_{\theta}(\varphi)$ functions(see, for instance, \cite{PT2015,2ZZ2021}).

\begin{definition}{\rm(\cite{BHS2011})}\label{definition1.2}
	The new $ BMO $ space $  BMO_{\theta  }(\varphi ) $ with $ \theta \ge 0 $  is 
	defined as a set of all locally integrable functions $ b $ satisfying,
	\begin{align*}
		\Vert 
		b\Vert_{BMO_{\theta}(\varphi)}:=\mathop{\sup}\limits_Q\frac{1}{\varphi(Q)^{\theta}
			|Q|}\int_Q|b(y)-b_Q|dy<\infty.
		\nonumber
	\end{align*}
	When $ \theta=0 $, 
	$BMO_\theta(\varphi)=BMO(\mathbb{R}^n)$. $BMO(\mathbb{R}^n) 
	\subset BMO_{\theta}(\varphi)$ and 
	$BMO_{{\theta}_1}(\varphi) \subset BMO_{{\theta}_2}(\varphi)$ for 
	${\theta}_1\leq {\theta}_2$. We define $BMO_{\infty}(\varphi):=  {\textstyle 
		\bigcup_{\theta \ge 0}} BMO_{\theta}(\varphi)$.
\end{definition}
Let $ \vec \theta =(\theta _1,\dots ,\theta _m) $ with $ \theta _1,\dots 
,\theta _m\ge 0 $ and $ \vec b=(b_1,\dots ,b_m) $. If $ b_j\in BMO_{\theta 
	_j}(\varphi ) $ for $ 1\le j\le m $, then $ \vec b\in BMO_{\vec \theta }^{m} 
(\varphi ) $.

The article is organized as follows. In Section \ref{sec2}, we give some definitions, symbols and establish the boundedness of the new multilinear Littlewood--Paley $g_{\lambda}^{*}$ operators with generalized kernels on weighted Lebesgue spaces. The boundedness of multilinear commutator generated by a new class of  multilinear Littlewood--Paley functions and new $ BMO $ functions on weighted Lebesgue spaces will be showed in Section \ref{sec3}. In Section  \ref{sec4}, we establish the boundedness of multilinear iterative commutators generated by a new class of  multilinear Littlewood--Paley functions and new $ BMO $ functions on weighted Lebesgue spaces.
\section{New Weighted Norm Inequalities for  $g_{\lambda}^{*}$ Operators
	with  Generalized Kernels on weighted Lebesgue Spaces}\label{sec2}

\quad\quad In Subsection 2.1, we provide certain definitions and symbols that will be used later. In Subsection 2.2, we establish the pointwise estimate for the sharp maximal function. In Subsection 2.3, the boundness of $g_{\lambda}^{*}$ operators with  generalized kernels on weighted Lebesgue spaces is given. 
\subsection{Definitions and Lemmas}
\begin{definition}{\rm(\cite{LZCZ2016})}
	The dyadic maximal function $M_{\varphi,\eta}^\triangle$  for $0<\eta<\infty$, $f\in L_{loc}^{1}$ is defined by
	\begin{align*}
	M_{\varphi,\eta}^\triangle f(x):=\mathop{\sup}\limits_{{\rm Q}\ni x }\frac{1}{\varphi(Q)^{\eta}|Q|}\int_Q|f(y)|dy,
	\end{align*}
where $ Q $ is any dyadic cube containing $x$ in $\mathbb{R}^{n}$.

	The dyadic sharp maximal operator $M_{\varphi,\eta}^{\sharp,\triangle}$ is 
	defined by 
	\begin{align*}
	M_{\varphi,\eta}^{\sharp,\triangle} f(x)
	&:=\mathop{\sup}\limits_{{\rm Q}\ni x ,r<1}\frac{1}{|Q|}\int_{Q}|f(y)-f_Q|dy+\mathop{\sup}\limits_{{\rm Q}\ni x,r\geq1}\frac{1}{\varphi(Q)^{\eta}|Q|}\int_{Q}|f(y)|dy\\
	&\simeq\mathop{\sup}\limits_{{\rm Q}\ni x,r<1}\mathop{\inf}\limits_{\mathit c }\frac{1}{|Q|}\int_{Q}|f(y)-c|dy+\mathop{\sup}\limits_{{\rm Q}\ni x,r\geq1}\frac{1}{\varphi(Q)^{\eta}|Q|}\int_{Q}|f(y)|dy,
	\end{align*}
	where $f_{Q} :=\frac{1}{\left | Q \right | } \displaystyle \int_Qf(y)dy$ and $ Q $ is the dyadic cube. 
\end{definition}
\begin{lemma}\label{Lemma2.1}{\rm(\cite{BHS2011})}  
	The following statements hold.
	\begin{itemize}
	\item [\rm (i)] $A_p^{\infty}(\varphi) \subset A_q^{\infty}(\varphi) $, $1\leq 
	p<q<\infty$.
	\item [\rm (ii)] If $\omega \in A_p^{\infty}(\varphi )$ with $p>1$, then there 
	exists a $\varepsilon>0$ such that $\omega \in  
	A_{p-\varepsilon}^{\infty}(\varphi )$. Consequently, 
	$A_p^{\infty}(\varphi ):= {\textstyle \bigcup_{q< p}} A_q^{\infty}(\varphi )$.
	\item [\rm (iii)] If $\omega \in A_p^{\infty}(\varphi )$ for $p\geq1$, then there exist positive numbers $l$, $\delta$ and $C$ such that for every cube $Q$,
	\begin{align*}
	\left(\frac{1}{|Q|}\int_Q\omega^{1+\delta}(x)dx\right)^{\frac{1}{1+\delta}}\leq
	C\left(\frac{1}{|Q|}\int_Q\omega(x)dx\right)(1+r)^l.
	\end{align*}
	\end{itemize}
\end{lemma}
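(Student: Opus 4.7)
The plan is to prove the three parts in the order (i), (iii), (ii), since (iii) is the key technical input for (ii) while (i) is a direct consequence of Jensen's inequality.

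For part (i), fix $\omega\in A_p^{\theta}(\varphi)$ and $p<q<\infty$. Writing $\omega^{-1/(q-1)}=(\omega^{-1/(p-1)})^{(p-1)/(q-1)}$ and applying Jensen's inequality to the concave function $t\mapsto t^{(p-1)/(q-1)}$ on the probability space $(Q,dy/|Q|)$ gives
\[
\frac{1}{|Q|}\int_Q\omega^{-1/(q-1)}dy\le\left(\frac{1}{|Q|}\int_Q\omega^{-1/(p-1)}dy\right)^{(p-1)/(q-1)}.
\]
Dividing by $\varphi(Q)^{\theta(p-1)/(q-1)}$ and using $\varphi(Q)^{\theta(p-1)/(q-1)}\le\varphi(Q)^{\theta}$ (valid since $\varphi(Q)\ge 1$ and $(p-1)/(q-1)<1$) promotes the inequality to one with averages taken against $\varphi(Q)^{\theta}|Q|$ on both sides. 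Raising to the $p/q$-th power and combining with the given bound on $\frac{1}{\varphi(Q)^{\theta}|Q|}\int_Q\omega\,dy$ controls the $A_q^{\theta}$-characteristic of $\omega$ by $[\omega]_{A_p^{\theta}(\varphi)}^{p/q}$, so $\omega\in A_q^{\theta}(\varphi)\subset A_q^{\infty}(\varphi)$.

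For part (iii), I would adapt the classical reverse H\"older machinery of Coifman--Fefferman / Garc\'ia-Cuerva--Rubio de Francia, namely a Calder\'on--Zygmund level-set iteration, paying special attention to the propagation of the $\varphi(Q)^{\theta}=(1+r)^{\theta}$ factor. Fix $Q_0=Q(x,r)$ and perform a stopping-time dyadic decomposition of $\omega$ at levels $\alpha_k=2^k\omega_{Q_0}$, producing pairwise-disjoint maximal subcubes $\{Q_{k,j}\}$ on which $\omega_{Q_{k,j}}>\alpha_k$. The $A_p^{\theta}(\varphi)$ condition provides a geometric decay $\omega\bigl(\bigcup_j Q_{k+1,j}\bigr)\le\beta\,\omega\bigl(\bigcup_j Q_{k,j}\bigr)$ with $\beta<1$, the implicit constants picking up only a polynomial power of $\varphi(Q_0)$. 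Summing the resulting layer-cake decomposition yields $\int_{Q_0}\omega^{1+\delta}dx\le C|Q_0|\,\omega_{Q_0}^{1+\delta}(1+r)^{l(1+\delta)}$, which is the stated inequality after taking the $(1+\delta)$-th root.

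Part (ii) then follows from (iii) by duality. For $\omega\in A_p^{\theta}(\varphi)$ with $p>1$, the dual weight $\sigma:=\omega^{-1/(p-1)}$ lies in $A_{p'}^{\theta'}(\varphi)$ for some $\theta'\ge 0$, which is immediate from the symmetric roles of $\omega$ and $\omega^{1-p'}$ in the defining inequality. Applying (iii) to $\sigma$ produces a reverse H\"older exponent $1+\delta$ together with a $(1+r)^{l}$ correction. Choosing $\varepsilon=(p-1)\delta/(1+\delta)$ so that $(p-1)/(p-\varepsilon-1)=1+\delta$, one has
\[
\frac{1}{|Q|}\int_Q\omega^{-1/(p-\varepsilon-1)}dy=\frac{1}{|Q|}\int_Q\sigma^{1+\delta}dy\le C(1+r)^{l(1+\delta)}\left(\frac{1}{|Q|}\int_Q\omega^{-1/(p-1)}dy\right)^{1+\delta}.
\]
Raising to the $(p-\varepsilon-1)$ power, multiplying by $\frac{1}{|Q|}\int_Q\omega\,dy$ and using the original $A_p^{\theta}(\varphi)$ bound concludes $\omega\in A_{p-\varepsilon}^{\theta^\ast}(\varphi)\subset A_{p-\varepsilon}^{\infty}(\varphi)$ for a suitable $\theta^\ast$ depending on $\theta,l,p,\varepsilon$. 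The equality $A_p^{\infty}(\varphi)=\bigcup_{q<p}A_q^{\infty}(\varphi)$ is then immediate once combined with (i). The main obstacle is clearly part (iii): in the classical proof the $A_p$ condition is scale invariant, keeping the decay constant $\beta$ uniform across cubes, whereas here the factor $\varphi(Q)^{\theta}=(1+r)^{\theta}$ breaks that invariance and the iteration must be arranged so that the accumulated loss is polynomial in $(1+r)$ rather than exponential. Once this bookkeeping is in place, the remaining deductions are essentially formal.
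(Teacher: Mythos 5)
The paper itself offers no proof of this lemma (it is quoted from \cite{BHS2011}, see also \cite{T2012}), so the only question is whether your argument stands on its own, and the crux, part (iii), has a genuine gap. Your plan is to run the classical Calder\'on--Zygmund level-set iteration on an arbitrary cube $Q_0$ and assert that the $A_p^{\theta}(\varphi)$ condition gives a geometric decay $\omega(\Omega_{k+1})\le\beta\,\omega(\Omega_k)$ with $\beta<1$, ``the implicit constants picking up only a polynomial power of $\varphi(Q_0)$''. That is not what the condition delivers. The quantitative input at each step is, for $E\subset Q\subset Q_0$, the estimate $\bigl(|E|/|Q|\bigr)^p\le C\,\varphi(Q)^{\theta p}\,\omega(E)/\omega(Q)$; applied to the complement of the stopping set it yields a per-step ratio $\beta=1-c\,\varphi(Q_0)^{-\theta p}$, which tends to $1$ as $r\to\infty$. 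The loss therefore does not appear as a multiplicative prefactor: when you sum $\sum_k\alpha_k^{\delta}\,\omega(\Omega_k)$ with $\alpha_k\sim 2^{(n+1)k}\omega_{Q_0}$ you need $2^{(n+1)\delta}\beta<1$, so the admissible reverse H\"older exponent degenerates like $\delta(Q_0)\sim(1+r)^{-\theta p}$, whereas the lemma asserts a single $\delta>0$ for all cubes with only a $(1+r)^{l}$ correction on the right-hand side. No rearrangement of the bookkeeping with levels $\alpha_k=2^k\omega_{Q_0}$ repairs this, because the weak decay is intrinsic to large stopping cubes.

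The missing idea is a localization to unit scale. On cubes of sidelength at most $1$ one has $\varphi(Q)^{\theta}\le 2^{\theta}$, so $\omega$ is an ordinary Muckenhoupt $A_p$ weight there with a uniform constant, hence satisfies the classical reverse H\"older inequality with a fixed $\delta$. For a cube $Q$ with $r\ge 1$ one tiles $Q$ by subcubes $Q_i$ of sidelength comparable to $1$, applies the classical inequality on each $Q_i$, and uses the trivial bound $\omega_{Q_i}\le\omega(Q)/|Q_i|\le C(1+r)^{n}\omega_Q$ to sum, which is exactly where the polynomial factor $(1+r)^{l}$ (with $l=n\delta/(1+\delta)$) comes from; equivalently, one may run your stopping-time scheme but start the levels at $\lambda_0=(1+r)^{n}\omega_{Q_0}$ so that every stopping cube already has sidelength at most $1$ and the subsequent iteration uses only the uniform small-scale $A_p$ constant. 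With (iii) established, your parts (i) and (ii) are sound: the Jensen argument for (i) is correct (you should add the trivial $p=1$ case, since the statement allows $p\ge1$), and the duality argument for (ii), using $\sigma=\omega^{1-p'}\in A_{p'}^{\theta}(\varphi)$, the choice $\varepsilon=(p-1)\delta/(1+\delta)$, and absorbing the accumulated powers of $(1+r)$ into a larger parameter $\theta^{\ast}$, does give $\omega\in A_{p-\varepsilon}^{\infty}(\varphi)$.
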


%\begin{lemma}\label{Lemma2.2}{\rm(\cite{T2012})} The following statements hold.
	%\begin{itemize}
	%\item [\rm (i)] $\omega \in A_p^{\theta}(\varphi )$ if and only if 
	%$\omega^{1-p'} \in A_{p'}^{\theta}(\varphi )$ for  
	%$1/p+1/p'=1$.
	%\item [\rm (ii)] If $\omega_0 , \omega_1 \in A_p^{\theta}(\varphi )$, $p\geq1$, 
	%then $\omega_0^{\alpha}\omega_1^{1-\alpha} \in A_p^{\theta}(\varphi )$ for 
	%any $0<\alpha<1$.
	%\item [\rm (iii)] If $\omega \in A_p^{\theta}(\varphi )$ for $1\leq p<\infty$, 
	%then
	%\begin{align*}
	%\frac{1}{\varphi(Q)^{\theta}|Q|}\int_Q |f(x)|dx \leq 
	%C\left(\frac{1}{\omega(5Q)}\int_Q|f(x)|^p\omega(x)dx\right)^{\frac1p}.
	%\end{align*}
	%\end{itemize}
%\end{lemma}

\begin{lemma}\label{Lemma2.2}{\rm(\cite{B2015})}  
	 The following statements are equivalent for  $1\leq p_1,\ldots,p_m<\infty$ and $\vec{\omega}=(\omega_1,\ldots,\omega_m)$.
	\begin{itemize}
	\item [\rm (i)] $\omega_j^{1-p'_j} \in A_{mp'_j}^{\infty}$, for
	$j=1,\ldots,m $ and $v_{\vec{\omega}} \in A_{mp}^{\infty}(\varphi )$.
	\item [\rm (ii)] $\vec\omega \in A_{\vec{p}}^{\infty}(\varphi ).$
	\end{itemize}
	\end {lemma}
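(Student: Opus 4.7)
The plan is to adapt the classical Lerner--Ombrosi--P\'erez--Torres--Trujillo-Gonz\'alez characterization of the multilinear Muckenhoupt class $A_{\vec p}$ (see \cite{LOP2009}) to the $\varphi$-weighted setting. The key observation is that the normalization $\Phi(Q):=\varphi(Q)^{\theta}|Q|$ plays exactly the role of the Lebesgue volume $|Q|$ in the unweighted case: it is constant on $Q$ and therefore factors cleanly through every H\"older estimate. Since $A_{\vec p}^{\infty}(\varphi)=\bigcup_{\theta\ge 0}A_{\vec p}^{\theta}(\varphi)$ and similarly for the scalar classes, it suffices to prove the equivalence at the quantitative $A_{\vec p}^{\theta}(\varphi)$ level, permitting the value of $\theta$ to change between the two implications.

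For (ii)$\Rightarrow$(i), fix $\vec\omega\in A_{\vec p}^{\theta}(\varphi)$ and a cube $Q$. The factorization
$v_{\vec\omega}^{-1/(mp-1)}=\prod_{j=1}^{m}\omega_j^{-p/(p_j(mp-1))}$
combined with H\"older's inequality, in which the exponents $r_j$ are chosen so that $\omega_j^{-p\,r_j/(p_j(mp-1))}=\omega_j^{1-p'_j}$ (these $r_j$ satisfy $\sum_j 1/r_j=1$ by virtue of $1/p=\sum_j 1/p_j$), yields an upper bound for $\tfrac{1}{\Phi(Q)}\int_Q v_{\vec\omega}^{-1/(mp-1)}$ in terms of $\prod_j\bigl(\tfrac{1}{\Phi(Q)}\int_Q\omega_j^{1-p'_j}\bigr)^{c_j}$ for explicit exponents $c_j$. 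Multiplied by $\bigl(\tfrac{1}{\Phi(Q)}\int_Q v_{\vec\omega}\bigr)$ raised to the matching power furnished by the hypothesis, this is exactly the scalar $A_{mp}^{\theta}(\varphi)$ inequality for $v_{\vec\omega}$. The individual conditions $\omega_j^{1-p'_j}\in A_{mp'_j}^{\infty}(\varphi)$ are derived analogously after solving $\omega_j=v_{\vec\omega}^{p_j/p}\prod_{k\ne j}\omega_k^{-p_j/p_k}$ and applying H\"older with the dual set of exponents built from $v_{\vec\omega}$ and the remaining $\omega_k^{1-p'_k}$.

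For (i)$\Rightarrow$(ii), I would work directly with the multilinear expression
$\bigl(\tfrac{1}{\Phi(Q)}\int_Q v_{\vec\omega}\bigr)^{1/p}\prod_{j=1}^{m}\bigl(\tfrac{1}{\Phi(Q)}\int_Q\omega_j^{1-p'_j}\bigr)^{1/p'_j}$
and bound it by H\"older's inequality applied with the conjugate exponents to those used above, combining the $A_{mp}^{\theta}(\varphi)$ bound for $v_{\vec\omega}$ with the $A_{mp'_j}^{\theta}(\varphi)$ bounds for $\omega_j^{1-p'_j}$. The main obstacle is purely exponent bookkeeping: verifying that the H\"older exponents always sum to $1$ and that the weight powers produced by each application land exactly on the intended targets $\omega_j^{1-p'_j}$, $v_{\vec\omega}$, or $v_{\vec\omega}^{-1/(mp-1)}$. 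Once this classical computation is in hand, the passage to the $\varphi$-weighted setting is essentially free because $\Phi(Q)$ commutes with every integral manipulation on $Q$, so the classical argument carries over verbatim after replacing $|Q|$ with $\Phi(Q)$ throughout.
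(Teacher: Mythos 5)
The paper offers no proof of this lemma at all: it is imported verbatim from Bui \cite{B2015} (and is the $\varphi$-weighted analogue of Theorem 3.6 in \cite{LOP2009}), so there is no in-paper argument to compare yours against. Judged on its own, your plan is the standard and essentially correct one — replace the normalization $|Q|$ by $\Phi(Q)=\varphi(Q)^{\theta}|Q|$ in the Lerner--Ombrosi--P\'erez--Torres--Trujillo-Gonz\'alez characterization — and your treatment of (ii)$\Rightarrow$(i) is sound: the factorization $v_{\vec\omega}^{-1/(mp-1)}=\prod_j\omega_j^{-p/(p_j(mp-1))}$ with H\"older exponents $r_j=\frac{p_j(mp-1)}{p(p_j-1)}$ (which do sum correctly), and the analogous identity $\omega_j=v_{\vec\omega}^{p_j/p}\prod_{k\neq j}\omega_k^{-p_j/p_k}$, give the scalar conditions with the \emph{same} $\theta$, since every H\"older application there has exponents summing to $1$ and $\Phi(Q)$ distributes through.

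Two points need more care than your sketch gives them. First, (i)$\Rightarrow$(ii) is not a direct H\"older estimate "with the conjugate exponents": H\"older goes the wrong way for bounding the averages of $v_{\vec\omega}$ and $\sigma_j=\omega_j^{1-p'_j}$ from above. The LOPTT mechanism is to use the scalar conditions in reverse (upper-bounding each average by a negative power of the average of its dual weight) and then to control the remaining product by applying H\"older to the constant function, i.e.\ to $\frac{1}{\Phi(Q)}\int_Q v_{\vec\omega}^{-a_0/(mp-1)}\prod_j\omega_j^{a_j(p'_j-1)/(mp'_j-1)}$ with suitable $a_0,a_j$ summing to $1$. Here the $\varphi$-setting is \emph{not} verbatim: $\frac{1}{\Phi(Q)}\int_Q 1=\varphi(Q)^{-\theta}$, not $1$, so this step leaves you with a bound of the form $C\varphi(Q)^{c\theta}$ for the $A_{\vec p}$-type expression, which must then be absorbed by passing to a strictly larger $\theta'$ (using $\varphi(Q)\ge 1$ and the fact that the $A_{\vec p}$ quantity is homogeneous of degree $m$ in the normalization). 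This is exactly why the lemma is stated for the union classes $A^{\infty}(\varphi)$; your remark that $\theta$ may change covers it in spirit, but the claim that the classical argument "carries over verbatim" is what hides the only genuinely new step. Second, the range $1\le p_j<\infty$ includes $p_j=1$, where $p'_j=\infty$ and the condition $\omega_j^{1-p'_j}\in A_{mp'_j}^{\infty}(\varphi)$ must be read in the usual limiting sense (essentially $\omega_j^{1/m}\in A_1^{\infty}(\varphi)$, with the $\bigl(\frac{1}{\Phi(Q)}\int_Q\omega_j^{1-p'_j}\bigr)^{1/p'_j}$ factor replaced by $(\inf_Q\omega_j)^{-1}$); your exponent bookkeeping, which uses $1/(p_j-1)$, silently excludes this endpoint and it needs a separate, though routine, verification.
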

	\begin{remark}
		The $A_{\vec{p}}^{\infty}(\varphi )$ weight function is not increasing, implying that for $\vec{p}=(p_1,\ldots,p_m)$ and $\vec{q}=(q_1,\ldots,q_m)$ with $p_j<q_j$, $j=1,\ldots, m$, it may not be the case that $A_{\vec{p}}^{\infty}(\varphi ) \subset A_{\vec{q}}^{\infty}(\varphi )$.
	\end{remark}
	\begin{lemma}\label{Lemma2.3}{\rm(\cite{T2012})}
		Let $1<p<\infty$, $0<\eta<\infty$ and $\omega\in A_p^\infty(\varphi )$, the following statements hold for $f\in L^p(\omega)$,
		\begin{align*}
		\Vert f \Vert_{L^p(\omega)}\leq\Vert M_{\varphi,\eta}^\triangle f 
		\Vert_{L^p(\omega)}\leq C\Vert M_{\varphi,\eta}^{\sharp, \triangle} f 
		\Vert_{L^p(\omega)}.
		\end{align*}
	
	For $0<\delta < \infty$, denote as
	\begin{align*}
		M^\triangle_{\delta,\varphi,\eta}f(x):=[M^\triangle_{\varphi,\eta}
		(|f|^\delta)]^{\frac1\delta}(x), M^{\sharp, 
		\triangle}_{\delta,\varphi,\eta}f(x):=[M^{\sharp, 
		\triangle}_{\varphi,\eta}(|f|^\delta)]^{\frac1\delta}(x).
	\end{align*}
	\end {lemma}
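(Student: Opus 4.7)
The plan is to establish the two inequalities separately. The lower bound $\|f\|_{L^p(\omega)}\leq \|M_{\varphi,\eta}^\triangle f\|_{L^p(\omega)}$ follows directly from Lebesgue differentiation in the dyadic system: for almost every $x$, a nested sequence of dyadic cubes $Q_k\ni x$ with side lengths $r_k\to 0$ satisfies $\varphi(Q_k)^\eta=(1+r_k)^\eta\to 1$ and $\tfrac{1}{|Q_k|}\int_{Q_k}|f(y)|\,dy\to |f(x)|$, so $|f(x)|\leq M_{\varphi,\eta}^\triangle f(x)$ for a.e. $x$. Raising to the $p$-th power and integrating against $\omega$ completes this half.

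For the upper bound I would follow the Fefferman--Stein strategy and prove a good-$\lambda$ inequality of the form
$$\omega\bigl(\{x:M_{\varphi,\eta}^\triangle f(x)>2\lambda,\ M_{\varphi,\eta}^{\sharp,\triangle}f(x)\leq \gamma\lambda\}\bigr)\leq C\gamma^{\delta}\,\omega\bigl(\{x:M_{\varphi,\eta}^\triangle f(x)>\lambda\}\bigr)$$
for all sufficiently small $\gamma>0$ and some $\delta>0$ depending on $\omega\in A_p^{\infty}(\varphi)$. The starting point is a Calder\'on--Zygmund-type stopping-time decomposition of the open set $\{M_{\varphi,\eta}^\triangle f>\lambda\}$ into its maximal dyadic cubes $\{Q_k\}$; within each $Q_k$ the argument splits according to whether $r_{Q_k}<1$ or $r_{Q_k}\geq 1$, matching the two regimes in the definition of $M_{\varphi,\eta}^{\sharp,\triangle}$, since the first regime controls oscillations $|f-f_{Q_k}|$ and the second controls the $\varphi(Q_k)^\eta$-normalized average of $|f|$.

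The main technical obstacle is coupling the weight $\omega\in A_p^{\infty}(\varphi)$ to the factors $\varphi(Q)^\eta$ uniformly across scales. In the small-cube regime $r_{Q_k}<1$ the argument reduces to the classical Fefferman--Stein scheme together with the usual $A_\infty$-style doubling estimate for $\omega$ on bounded cubes. In the large-cube regime $r_{Q_k}\geq 1$ one invokes the reverse-H\"older type inequality from Lemma \ref{Lemma2.1}(iii), which carries an extra factor $(1+r)^l$; this factor must be absorbed into $\varphi(Q_k)^\eta$ by a careful choice of the parameters governing $\eta$ and $\delta$, and this is the delicate point that must be tracked throughout the estimates.

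Once the good-$\lambda$ inequality is established, I would multiply by $p\lambda^{p-1}$ and integrate in $\lambda\in(0,\infty)$. A standard rearrangement, after truncating $M_{\varphi,\eta}^\triangle f$ to ensure a priori finiteness of the left-hand side, yields
$$\|M_{\varphi,\eta}^\triangle f\|_{L^p(\omega)}^p\leq C\gamma^{\delta}\|M_{\varphi,\eta}^\triangle f\|_{L^p(\omega)}^p+C_\gamma\|M_{\varphi,\eta}^{\sharp,\triangle}f\|_{L^p(\omega)}^p,$$
and choosing $\gamma$ so that $C\gamma^{\delta}<1/2$ allows the first term on the right to be absorbed into the left, delivering $\|M_{\varphi,\eta}^\triangle f\|_{L^p(\omega)}\leq C\|M_{\varphi,\eta}^{\sharp,\triangle}f\|_{L^p(\omega)}$ and completing the proof after passing to the limit in the truncation.
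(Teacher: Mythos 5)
This lemma is not proved in the paper at all -- it is imported verbatim from Tang \cite{T2012} -- so the only meaningful comparison is with the known Fefferman--Stein-type argument behind it, and your skeleton (dyadic Lebesgue differentiation for the first inequality, a weighted good-$\lambda$ inequality plus absorption for the second) is indeed the right family of argument. The lower bound and the final integration/absorption step are fine, modulo the usual a priori finiteness issues that your truncation remark addresses.

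However, there is a genuine flaw at the step you yourself flag as ``the delicate point.'' You propose to handle large stopping cubes ($r_{Q_k}\ge 1$) by invoking the reverse H\"older estimate of Lemma \ref{Lemma2.1}(iii) and then ``absorbing'' its factor $(1+r)^{l}$ into $\varphi(Q_k)^{\eta}$ ``by a careful choice of the parameters governing $\eta$ and $\delta$.'' This is not available: in the statement $\eta$ is an arbitrary prescribed parameter (it may be as small as one likes), while $l$ is dictated by the fixed weight $\omega\in A_p^{\infty}(\varphi)$ and may be large, so no choice of parameters makes $(1+r)^{l}$ controllable by $\varphi(Q_k)^{\eta}$ uniformly over large cubes; summing the good-$\lambda$ estimate over large maximal cubes would then produce an unbounded factor and the inequality would fail to close. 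The correct mechanism is different and is precisely what the two-regime definition of $M_{\varphi,\eta}^{\sharp,\triangle}$ is designed for: for any dyadic cube $Q\ni x$ with $r\ge 1$ one has, directly from the definition,
\begin{align*}
\frac{1}{\varphi(Q)^{\eta}|Q|}\int_{Q}|f(y)|\,dy\ \le\ M_{\varphi,\eta}^{\sharp,\triangle}f(x),
\end{align*}
so the large-cube part of $M_{\varphi,\eta}^{\triangle}f$ is pointwise dominated by $M_{\varphi,\eta}^{\sharp,\triangle}f$ (equivalently, on a large stopping cube the set $\{M_{\varphi,\eta}^{\triangle}f>2\lambda,\ M_{\varphi,\eta}^{\sharp,\triangle}f\le\gamma\lambda\}$ is empty once $\gamma<1$), and no weight estimate is needed there at all. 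The good-$\lambda$/Calder\'on--Zygmund machinery, and hence Lemma \ref{Lemma2.1}(iii), is only ever applied on cubes of side length $r<1$, where the loss $(1+r)^{l}\le 2^{l}$ is harmless. As written, your argument locates the difficulty in the wrong regime and proposes a repair that cannot work; rerouting the large-cube case through the pointwise domination above (and keeping your classical local argument for $r<1$) fixes the proof.
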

	\begin{lemma} \label{Lemma2.4}{\rm(\cite{T2012})} 
		Let $\psi: (0,\infty)\to(0,\infty)$ be doubling, meaning there exists a constan $C_0$ such that for $a>0$, $\psi(2a)\leq C_0\psi(a)$. Under these conditions, there exists a constant $C>0$ that depends on the doubling  condition of  $\psi$ and the $ 
		A_\infty^\infty(\varphi )$ condition of  $\omega$ for $0<\eta<\infty$, $\omega\in A_\infty^\infty(\varphi )$ and 
		$\delta>0$ such that,
   \begin{align*}
		\mathop{\sup}\limits_{\lambda>0}\psi(\lambda)\omega\left(\lbrace 
		y\in\mathbb{R}^n, M_{\delta.\varphi,\eta}^\triangle f(y)>\lambda\rbrace\right)
		\leq C \mathop{\sup}\limits_{\lambda>0}\psi(\lambda)\omega\left(\lbrace 
		y\in\mathbb{R}^n, M_{\delta.\varphi,\eta}^{\sharp, \triangle} 
		f(y)>\lambda\rbrace\right).
	\end{align*}
		\end {lemma}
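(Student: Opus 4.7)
The plan is to follow the classical Fefferman--Stein good-$\lambda$ strategy, adapted to the $\varphi$-weighted dyadic framework. Since $M^{\triangle}_{\delta,\varphi,\eta}f=[M^\triangle_{\varphi,\eta}(|f|^\delta)]^{1/\delta}$ and $M^{\sharp,\triangle}_{\delta,\varphi,\eta}f=[M^{\sharp,\triangle}_{\varphi,\eta}(|f|^\delta)]^{1/\delta}$, after replacing $f$ by $|f|^\delta$ and $\lambda$ by $\lambda^\delta$ we may assume $\delta=1$. A standard truncation $f\mapsto |f|\wedge N$ followed by monotone convergence further reduces to a setting where the left-hand supremum is a priori finite, so that the absorption step at the end is justified.

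The heart of the proof is a local good-$\lambda$ inequality. For each $\lambda>0$ I would perform a Calder\'on--Zygmund decomposition of $\{M^\triangle_{\varphi,\eta}f>\lambda\}$ into maximal dyadic cubes $\{Q_j\}$ with $\frac{1}{\varphi(Q_j)^\eta|Q_j|}\int_{Q_j}|f|\,dy>\lambda$. For constants $A>1$ and $\gamma\in(0,1)$ to be chosen, the goal is to show
$$|\{y\in Q_j:M^\triangle_{\varphi,\eta}f(y)>A\lambda,\ M^{\sharp,\triangle}_{\varphi,\eta}f(y)\leq\gamma\lambda\}|\leq C\gamma\,|Q_j|,$$
uniformly in $j$. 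This splits according to the sidelength $r_j$ of $Q_j$. When $r_j<1$ every dyadic descendant of $Q_j$ still has sidelength less than $1$, so the oscillation portion of $M^{\sharp,\triangle}_{\varphi,\eta}$ controls $|f-f_{Q_j}|$ on average, and a Chebyshev estimate produces the factor $\gamma$ after choosing $A$ to dominate the parent average $|f_{Q_j}|$. When $r_j\geq 1$, the second (average) term in the definition of $M^{\sharp,\triangle}_{\varphi,\eta}$ bounds $M^\triangle_{\varphi,\eta}f$ pointwise on $Q_j$ up to a factor depending on $\eta$, so the set in question becomes empty once $A$ is large enough. In either regime the $\varphi(Q_j)^\eta$ normalization appears on both sides and cancels.

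Next I would transfer from Lebesgue to $\omega$-measure using the $A_\infty^\infty(\varphi)$ hypothesis. The reverse H\"older inequality in Lemma \ref{Lemma2.1}(iii) yields a quantitative comparison of the form $\omega(E)/\omega(Q_j)\leq C(|E|/|Q_j|)^\sigma$ for some $\sigma>0$, with the $(1+r_j)^l$ factor absorbed into the constant on each CZ cube. Summing over the disjoint $\{Q_j\}$ then gives
$$\omega(\{M^\triangle_{\varphi,\eta}f>A\lambda,\ M^{\sharp,\triangle}_{\varphi,\eta}f\leq\gamma\lambda\})\leq C\gamma^\sigma\,\omega(\{M^\triangle_{\varphi,\eta}f>\lambda\}).$$
Combining this with the trivial bound $\omega(\{M^\triangle>A\lambda\})\leq\omega(\{M^\triangle>A\lambda,M^{\sharp,\triangle}\leq\gamma\lambda\})+\omega(\{M^{\sharp,\triangle}>\gamma\lambda\})$, multiplying by $\psi(A\lambda)$, using the doubling property to replace $\psi(A\lambda)$ by constant multiples of $\psi(\lambda)$ and $\psi(\gamma\lambda)$, and taking $\sup_{\lambda>0}$ yields a self-improving inequality of the form $S\leq C_1\gamma^\sigma S+C_2 S^\sharp$, where $S$ and $S^\sharp$ are the two sides of the desired inequality. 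Choosing $\gamma$ small enough that $C_1\gamma^\sigma<1/2$ lets one absorb, and undoing the reduction $\delta=1$ delivers the stated bound.

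The principal obstacle is the local good-$\lambda$ estimate, specifically the two-regime structure of $M^{\sharp,\triangle}_{\varphi,\eta}$. One must verify that on CZ cubes with sidelength at least $1$ the weaker average term of the sharp maximal is already enough to force the exceptional set to be empty, while on smaller cubes the Chebyshev argument genuinely produces linear decay in $\gamma$. A secondary technicality is ensuring that the $(1+r)^l$ correction in Lemma \ref{Lemma2.1}(iii) does not degrade the good-$\lambda$ exponent $\gamma^\sigma$ in a size-dependent way as $Q_j$ grows; this is where the theory honestly diverges from the classical $A_\infty$ case and is the main reason the hypothesis must be formulated in the $A_\infty^\infty(\varphi)$ class rather than in $A_\infty$.
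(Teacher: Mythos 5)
The paper does not prove this lemma at all: it is quoted verbatim from Tang \cite{T2012}, so there is no in-house argument to compare with. Your good-$\lambda$ scheme is the standard (and presumably the cited source's) route for Fefferman--Stein estimates in this $\varphi$-weighted dyadic setting, and the skeleton is sound: Calder\'on--Zygmund decomposition of $\{M^{\triangle}_{\varphi,\eta}f>\lambda\}$ into maximal dyadic cubes, a local good-$\lambda$ estimate split according to $r_j<1$ or $r_j\geq 1$, transfer to $\omega$-measure through Lemma \ref{Lemma2.1}(iii), doubling of $\psi$, absorption. One remark on the $r_j\geq 1$ regime: the cleanest reason the exceptional set is empty there is not that the average term of $M^{\sharp,\triangle}_{\varphi,\eta}$ dominates $M^{\triangle}_{\varphi,\eta}$, but that the CZ cube $Q_j$ itself (dyadic, sidelength $\geq 1$, $\varphi$-normalized average $>\lambda$) is admissible in the second supremum defining $M^{\sharp,\triangle}_{\varphi,\eta}$, so $M^{\sharp,\triangle}_{\varphi,\eta}f>\lambda\geq\gamma\lambda$ at every point of $Q_j$, for any $A\geq 1$ and $\gamma\leq 1$. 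This observation also disposes of the ``secondary technicality'' you leave open at the end: since the set is empty on all cubes with $r_j\geq 1$, the comparison $\omega(E)\leq C(1+r_j)^{l}\left(|E|/|Q_j|\right)^{\sigma}\omega(Q_j)$ coming from Lemma \ref{Lemma2.1}(iii) is only ever invoked on cubes with $r_j<1$, where $(1+r_j)^{l}\leq 2^{l}$ uniformly; moreover on such cubes the parent has sidelength $<2$, so $\varphi(\widehat{Q}_j)^{\eta}\leq 3^{\eta}$ and $|f|_{Q_j}\leq 2^{n}3^{\eta}\lambda$, which is exactly what allows a fixed choice of $A$ and linear decay in $\gamma$ from the Chebyshev step. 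You should state this closure explicitly rather than flag it as an unresolved divergence from the classical case.

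Two genuine loose ends remain in your write-up, both repairable. First, the a priori finiteness needed for absorption is glossed: truncating $f$ to $|f|\wedge N$ does not by itself make $\sup_{\lambda>0}\psi(\lambda)\omega(\{M^{\triangle}_{\varphi,\eta}f>\lambda\})$ finite when $\omega(\mathbb{R}^n)=\infty$, since for small $\lambda$ the level set may carry infinite $\omega$-measure. The usual fixes are to prove the inequality under the standing assumption that the left-hand side is finite (which is how it is applied in Theorem \ref{Theorem 2.2}(ii), where $\psi(t)=t^{p}$), or to run the absorption on a localized or truncated maximal function and pass to the limit. Second, in the reduction to $\delta=1$ and in replacing $\psi(A\lambda)$ by $C\psi(\lambda)$ and $C\psi(\gamma\lambda)$, the doubling hypothesis $\psi(2a)\leq C_0\psi(a)$ alone (without monotonicity) only iterates along powers of $2$, so one should choose $A$ and $A/\gamma$ to be dyadic powers, or run the good-$\lambda$ directly at the level of $M^{\triangle}_{\varphi,\eta}(|f|^{\delta})$; this is harmless but worth a sentence. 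With these repairs your proposal is a correct proof along the expected lines.
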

		
	The multilinear maximal operators $\mathcal{M}_{\varphi,\eta}$ and $ \mathcal{M}_{\delta,\varphi,\eta} $ for $0<\eta<\infty$, $0<\delta < \infty$ and $\vec{f}=(f_1,\ldots, f_m)$, $f_j\in L_{loc}^1(\mathbb{R}^n)$ are defined by,
	\begin{align*}
		\mathcal{M}_{\varphi,\eta}(\vec{f})(x):=\mathop{\sup}\limits_{x\in {\rm	Q}}\prod_{j=1}^m\frac1{\varphi(Q)^\eta \left | Q \right | } \int_Q 
		|f_j(y_j)| dy_j,
	\end{align*}
	and
	\begin{align*}
	\mathcal{M}_{\delta,\varphi,\eta}(\vec f)(x)
	:=\left [ \mathcal{M}_{\varphi,\eta}(|\vec{f}|^{\delta })(x) \right ] 
	^{\frac{1}{\delta } }
	=\left(\mathop{\sup}\limits_{x\in 
	{\rm Q}}\prod_{j=1}^m\frac1{\varphi(Q)^\eta \left | Q \right |} \int_Q 
	|f_j(y_j)|^\delta dy_j \right)^{\frac1\delta}.
	\end{align*}
		
	\begin{lemma}\label{Lemma2.5}{\rm(\cite{PT2015})}
		Let $\vec \omega\in A_{\vec{p}}^\infty(\varphi )$ and $1<p_j<\infty$, 
		$j=1,\ldots, m$, $1/p = {\textstyle \sum_{j=1}^{m}}  1/p_j
		$, then there exists some $\eta_0>0$ depending on $p, m , p_j$, such that
	\begin{align*}
	\Vert \mathcal{M}_{\varphi,\eta_0}(\vec{f}) \Vert_{L^{p}(v_{\vec \omega })}\leq 
	C\prod_{j=1}^{m}\Vert f_j \Vert_{L^{p_j}(\omega_j)}.
	\end{align*}
	\end {lemma}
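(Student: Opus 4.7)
The strategy parallels the classical Lerner--Ombrosi--P\'erez--Torres--Trujillo-Gonz\'alez argument for the multilinear maximal operator, adapted to accommodate the $\varphi(Q)^\theta$ factors intrinsic to the class $A_{\vec p}^\infty(\varphi)$. Fix $\vec\omega\in A_{\vec p}^\theta(\varphi)$ for some $\theta\geq 0$. By Lemma \ref{Lemma2.2}, this hypothesis is equivalent to the pair of scalar conditions $v_{\vec\omega}\in A_{mp}^\infty(\varphi)$ and $\omega_j^{1-p_j'}\in A_{mp_j'}^\infty(\varphi)$ for $j=1,\ldots,m$, each of which may be slightly strengthened via the openness in Lemma \ref{Lemma2.1}(ii).

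The core pointwise estimate stems from H\"older's inequality applied to the factorization $|f_j|=|f_j|\omega_j^{1/p_j}\cdot\omega_j^{-1/p_j}$:
\begin{align*}
\frac{1}{|Q|}\int_Q|f_j|\,dy_j\leq\left(\frac{1}{|Q|}\int_Q|f_j|^{p_j}\omega_j\,dy_j\right)^{1/p_j}\left(\frac{1}{|Q|}\int_Q\omega_j^{1-p_j'}dy_j\right)^{1/p_j'},
\end{align*}
since $\omega_j^{-p_j'/p_j}=\omega_j^{1-p_j'}$. Taking the product over $j$, introducing the prefactor $\varphi(Q)^{-m\eta_0}$, and applying the $A_{\vec p}^\theta(\varphi)$ condition to collapse $\prod_j\bigl(\varphi(Q)^{-\theta}|Q|^{-1}\int_Q\omega_j^{1-p_j'}\bigr)^{1/p_j'}$ into a single factor $\bigl(v_{\vec\omega}(Q)/(\varphi(Q)^\theta|Q|)\bigr)^{-1/p}$, and using $\sum_j 1/p_j'=m-1/p$, I obtain the pointwise bound
\begin{align*}
\mathcal{M}_{\varphi,\eta_0}(\vec f)(x)\leq C\sup_{Q\ni x}\varphi(Q)^{-m\eta_0+\theta(m-1/p)}\left(\frac{v_{\vec\omega}(Q)}{\varphi(Q)^\theta|Q|}\right)^{-1/p}\prod_{j=1}^{m}\left(\frac{1}{|Q|}\int_Q|f_j|^{p_j}\omega_j\,dy_j\right)^{1/p_j}.
\end{align*}

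From here, select $\eta_0$ large enough that $-m\eta_0+\theta(m-1/p)\leq 0$ and discard the residual $\varphi(Q)$ factor using $\varphi(Q)\geq 1$. Raising to the $p$-th power, multiplying by $v_{\vec\omega}(x)$, integrating over $\mathbb{R}^n$, and invoking H\"older's inequality with exponents $p_j/p$ then reduces matters to a weighted Hardy--Littlewood maximal estimate on each coordinate. The required scalar bound follows from Lemma \ref{Lemma2.3} applied to $v_{\vec\omega}\in A_{mp}^\infty(\varphi)$ and to each weight $\omega_j$, whose membership in an appropriate $A_s^\infty(\varphi)$ class is extracted by duality from $\omega_j^{1-p_j'}\in A_{mp_j'}^\infty(\varphi)$. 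Summation over a Calder\'on--Zygmund-type collection of cubes then yields the factorized estimate $C\prod_j\|f_j\|_{L^{p_j}(\omega_j)}^p$.

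The principal obstacle is the precise calibration of $\eta_0$. Three distinct sources of $\varphi(Q)$ contributions must be tracked simultaneously: the $\varphi(Q)^{-m\eta_0}$ built into the definition of $\mathcal{M}_{\varphi,\eta_0}$, the $\varphi(Q)^{\theta(m-1/p)}$ produced by the $A_{\vec p}^\theta(\varphi)$ inequality, and the reverse-H\"older factor $(1+r)^{l}$ from Lemma \ref{Lemma2.1}(iii) required when invoking Lemma \ref{Lemma2.3}. The parameter $\eta_0$ must be chosen large enough---depending on $p$, $m$, $p_j$, $\theta$ and $l$---that every residual power of $\varphi(Q)$ carries a non-positive exponent and can therefore be safely discarded. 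A secondary subtlety stems from the non-monotonicity of $A_{\vec p}^\infty(\varphi)$ in $\vec p$ noted after Lemma \ref{Lemma2.2}, which forces the openness step to be applied to the scalar weights $v_{\vec\omega}$ and $\omega_j^{1-p_j'}$ rather than to $\vec\omega$ itself.
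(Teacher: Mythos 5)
This lemma is not proved in the paper at all; it is quoted from \cite{PT2015}, so the only question is whether your argument would actually establish it, and as written it would not. The fatal step is the reduction after your pointwise bound. You apply H\"older's inequality at the \emph{full} exponents $p_j$, arriving at
$\mathcal{M}_{\varphi,\eta_0}(\vec f)(x)^p\le C\sup_{Q\ni x}\frac{\varphi(Q)^{\theta}|Q|}{v_{\vec\omega}(Q)}\prod_{j=1}^m\bigl(\frac1{|Q|}\int_Q|f_j|^{p_j}\omega_j\bigr)^{p/p_j}$,
and then propose to separate the factors and use H\"older with exponents $p_j/p$ in the $x$-integral. But the functions $|f_j|^{p_j}\omega_j$ lie only in $L^1$, so after this separation you need an $L^1$-type strong bound for a (weighted) Hardy--Littlewood maximal operator, which is false; this is exactly the classical obstruction already present when $m=1$, where plain H\"older at exponent $p$ plus the $A_p$ condition yields only the weak $(p,p)$ estimate. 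The strong type requires creating room first: the openness property (your Lemma \ref{Lemma3.2}(ii), or the reverse H\"older inequality in Lemma \ref{Lemma2.1}(iii)) gives $\vec\omega\in A_{\vec p/r}^{\infty}(\varphi)$ for some $r>1$, and H\"older must then be applied at the exponents $p_j/r$, so that the resulting maximal functions act at Lebesgue exponents strictly greater than $1$ (with respect to the measures $\sigma_j=\omega_j^{1-p_j'}\,dy$, handled by centered/Besicovitch-type arguments since these measures need not be doubling, and with the extra $\varphi(Q)$ powers absorbed by taking $\eta_0$ large). You mention the openness of Lemma \ref{Lemma2.1}(ii) in passing but never use it: your displayed pointwise estimate has no self-improvement built in, so the argument collapses at the integration step.

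A second, independent error is your appeal to Lemma \ref{Lemma2.3} for ``the required scalar bound.'' Lemma \ref{Lemma2.3} is a Fefferman--Stein type inequality, $\Vert f\Vert_{L^p(\omega)}\le\Vert M^{\triangle}_{\varphi,\eta}f\Vert_{L^p(\omega)}\le C\Vert M^{\sharp,\triangle}_{\varphi,\eta}f\Vert_{L^p(\omega)}$; it does not assert that any maximal operator is bounded on $L^p(\omega)$, so it cannot close the estimate you need. (A minor further point: the exponent bookkeeping $-m\eta_0+\theta(m-1/p)\le 0$ forces $\eta_0$ to depend on the parameter $\theta$ of the weight, not only on $p,m,p_j$; this is consistent with how such results are actually used, but it should be stated.) To repair the proof you should follow the Lerner--Ombrosi--P\'erez scheme as adapted in \cite{B2015,PT2015}: first pass to $\vec\omega\in A_{\vec p/r}^{\infty}(\varphi)$, then run the H\"older/$A_{\vec p/r}^{\infty}(\varphi)$ computation at exponents $p_j/r$, and conclude with boundedness of the relevant weighted maximal operators at exponents $p_j/r>1$.
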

			
    \begin{lemma}\label{Lemma2.6}{\rm(\cite{PT2015})}
	Let $\vec \omega\in A_{\vec p}^{\infty } (\varphi )$ and $1\leq p_j<\infty$, 
	$1/p = {\textstyle \sum_{j=1}^{m}}  1/p_j$, 
	$j=1,\ldots, m$, then there exists some $\theta_0>0$ depending on $p, m, p_j$, 
	such that
	\begin{align*}
	\Vert \mathcal{M}_{\varphi,\theta_0}(\vec{f}) \Vert_{L^{p,\infty}(v_{\vec 
	\omega })}\leq C\prod_{j=1}^{m}\Vert f_j \Vert_{L^{p_j}(\omega_j)}.
	\end{align*}
	\end {lemma}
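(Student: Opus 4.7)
My approach is a dyadic stopping-time argument combined with weighted H\"older's inequality. Since $\vec\omega\in A_{\vec p}^\infty(\varphi)$, fix $\theta\geq 0$ with $\vec\omega\in A_{\vec p}^\theta(\varphi)$ and choose $\theta_0\geq\theta$ (depending on $p,m,p_j$ and implicitly on the weight's exponent $\theta$). By the three-lattice theorem, using that $\varphi(Q)=1+r_Q$ is comparable for cubes of comparable size, one has the pointwise bound $\mathcal M_{\varphi,\theta_0}(\vec{f})\leq C\sum_{i=1}^{3^n}\mathcal M^{\Delta_i}_{\varphi,\theta_0}(\vec{f})$, where each $\mathcal M^{\Delta_i}_{\varphi,\theta_0}$ is the analogous supremum restricted to a translated dyadic lattice $\Delta_i$. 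It therefore suffices to prove the weak-type bound for a single dyadic version $\mathcal M^\Delta$. For $\lambda>0$, the level set $E_\lambda:=\{\mathcal M^\Delta(\vec{f})>\lambda\}$ is a disjoint union of maximal dyadic cubes $\{Q_k\}$ satisfying
$$\prod_{j=1}^m\frac{1}{\varphi(Q_k)^{\theta_0}|Q_k|}\int_{Q_k}|f_j(y_j)|\,dy_j>\lambda.$$

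On each $Q_k$ I apply weighted H\"older: for $p_j>1$,
$$\int_{Q_k}|f_j|\,dy_j\leq\Bigl(\int_{Q_k}|f_j|^{p_j}\omega_j\Bigr)^{1/p_j}\Bigl(\int_{Q_k}\omega_j^{1-p_j'}\Bigr)^{1/p_j'},$$
with the usual replacement $(\inf_{Q_k}\omega_j)^{-1}$ of the last factor when $p_j=1$. The $A_{\vec p}^\theta(\varphi)$ condition, via the identity $1/p+\sum_j 1/p_j'=m$, rearranges to
$$v_{\vec\omega}(Q_k)\leq C\,\varphi(Q_k)^{\theta mp}|Q_k|^{mp}\prod_{j=1}^m\Bigl(\int_{Q_k}\omega_j^{1-p_j'}\Bigr)^{-p/p_j'}.$$
Raising the threshold inequality to the $p$-th power, multiplying by $v_{\vec\omega}(Q_k)$, and cancelling the $\omega_j^{1-p_j'}$-integrals yields
$$\lambda^p v_{\vec\omega}(Q_k)\leq C\,\varphi(Q_k)^{(\theta-\theta_0)mp}\prod_{j=1}^m\Bigl(\int_{Q_k}|f_j|^{p_j}\omega_j\Bigr)^{p/p_j}\leq C\prod_{j=1}^m\Bigl(\int_{Q_k}|f_j|^{p_j}\omega_j\Bigr)^{p/p_j},$$
where the last inequality uses $\theta_0\geq\theta$ together with $\varphi(Q_k)\geq 1$.

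Finally, summing over $k$ and applying H\"older's inequality for series with conjugate exponents $p_j/p\geq 1$ (admissible because $1/p=\sum_j 1/p_j$ gives $\sum_j p/p_j=1$), together with the disjointness of the $Q_k$, one obtains
$$\lambda^p v_{\vec\omega}(E_\lambda)=\lambda^p\sum_k v_{\vec\omega}(Q_k)\leq C\sum_k\prod_{j=1}^m\Bigl(\int_{Q_k}|f_j|^{p_j}\omega_j\Bigr)^{p/p_j}\leq C\prod_{j=1}^m\|f_j\|_{L^{p_j}(\omega_j)}^p,$$
which is the claimed weak-type estimate. The main conceptual point--and the only respect in which the argument differs from the classical $A_{\vec p}$ setting--is the interplay between the $\varphi(Q)^{\theta_0}$ normalization in the operator and the $\varphi(Q)^\theta$ factor produced by the multiple-weight condition: taking $\theta_0\geq\theta$ makes the residual power $\varphi(Q_k)^{(\theta-\theta_0)mp}$ harmless since $\varphi(Q_k)\geq 1$. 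The endpoint case $p_j=1$, which is the distinctive feature over the strong-type bound in Lemma~\ref{Lemma2.5}, is handled uniformly by the standard convention that the $p_j'=\infty$ factor is $(\inf_{Q_k}\omega_j)^{-1}$.
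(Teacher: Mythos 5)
Your argument is correct, and there is in fact no internal proof to compare it with: the paper quotes Lemma \ref{Lemma2.6} directly from \cite{PT2015} without proof, so what you have written is essentially a self-contained reconstruction of the cited result, following the standard Lerner--Ombrosi--P\'erez weak-type scheme for the multilinear maximal function adapted to the $\varphi$-damped averages. The computations check out: the exponent identity $1/p+\sum_{j}1/p_j'=m$ (valid also when some $p_j=1$, with the convention $1/p_j'=0$ and the $(\inf_{Q}\omega_j)^{-1}$ replacement) makes the powers of $|Q_k|$ cancel exactly, leaving only $\varphi(Q_k)^{(\theta-\theta_0)mp}\le 1$ once $\theta_0\ge\theta$; and because the maximal dyadic cubes give $E_\lambda$ as an exactly disjoint union, no dilation and hence no doubling of $v_{\vec\omega}$ is ever invoked, which matters since weights in $A_{\vec p}^{\infty}(\varphi)$ need not be doubling. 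Two routine points should be made explicit in a polished write-up: (i) maximal dyadic cubes satisfying the threshold need not exist for general $f_j\in L^{p_j}(\omega_j)$, so one should first take the $f_j$ bounded with compact support (then the averages over large cubes tend to zero) and recover the general case by monotone convergence of truncations together with continuity from below of $v_{\vec\omega}$; (ii) your $\theta_0$ depends on the exponent $\theta$ of the class $A_{\vec p}^{\theta}(\varphi)$ containing $\vec\omega$, not only on $p,m,p_j$ as the statement literally reads, but this matches how the lemma is stated in \cite{PT2015} and how it is used in this paper (one simply needs existence of some $\theta_0$), so it is harmless.
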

	\begin{lemma}\label{Lemma2.7}{\rm(\cite{LOP2009})}
	Suppose $0<p<q<\infty$. Then there is a 
	positive constant $C:=C_{p,q}$, such that for all measurable functions $f$,
	\begin{align*}
	|Q|^{-1/p}\Vert f \Vert_{L^p (Q)}\leq C|Q|^{-1/q}\Vert f 
	\Vert_{L^{q,\infty}(Q)}.
	\end{align*}
	\end{lemma}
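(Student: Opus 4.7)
The plan is to establish this Kolmogorov-type inequality via the standard layer-cake representation combined with a truncation argument. The starting point is the identity
\[\int_Q |f(x)|^p\, dx = p \int_0^\infty \lambda^{p-1}\, \bigl|\{x\in Q : |f(x)|>\lambda\}\bigr|\, d\lambda,\]
together with the two obvious estimates on the distribution function: the trivial bound $|\{x\in Q : |f(x)|>\lambda\}| \leq |Q|$, and the weak-type bound $|\{x\in Q : |f(x)|>\lambda\}| \leq \lambda^{-q}\|f\|_{L^{q,\infty}(Q)}^q$.

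The key step is to split the $\lambda$-integral at the balance point $\lambda_0 := |Q|^{-1/q}\|f\|_{L^{q,\infty}(Q)}$, where the two estimates coincide. On $(0,\lambda_0]$ I would use the trivial bound, which yields $|Q|\lambda_0^p$; on $(\lambda_0,\infty)$ I would use the weak bound, and since $q>p$ the tail integral $\int_{\lambda_0}^\infty \lambda^{p-1-q}\,d\lambda$ converges and contributes $\tfrac{p}{q-p}\|f\|_{L^{q,\infty}(Q)}^q\lambda_0^{p-q}$. Both contributions then collapse to a common constant multiple of $|Q|^{1-p/q}\|f\|_{L^{q,\infty}(Q)}^p$, so after taking the $p$-th root and rearranging one recovers exactly $|Q|^{-1/p}\|f\|_{L^p(Q)} \leq C_{p,q}\,|Q|^{-1/q}\|f\|_{L^{q,\infty}(Q)}$.

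There is no real obstacle here beyond bookkeeping of constants; the only genuinely needed ingredient is the hypothesis $q>p$, which is exactly what ensures convergence of the tail integral and produces the factor $p/(q-p)$. The degenerate cases $|Q|=0$ or $\|f\|_{L^{q,\infty}(Q)}=0$ make the inequality trivial, so one may freely assume both quantities are positive in order to form the threshold $\lambda_0$.
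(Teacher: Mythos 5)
Your argument is correct: the layer-cake formula, the two bounds on the distribution function, the split at $\lambda_0=|Q|^{-1/q}\Vert f\Vert_{L^{q,\infty}(Q)}$, and the convergence of the tail integral (which is exactly where $q>p$ enters) combine to give $\Vert f\Vert_{L^p(Q)}\leq \left(1+\tfrac{p}{q-p}\right)^{1/p}|Q|^{1/p-1/q}\Vert f\Vert_{L^{q,\infty}(Q)}$, which is the stated inequality. The paper itself gives no proof of this lemma — it is quoted from the cited reference as the standard Kolmogorov inequality — and your proposal is precisely the standard proof of it, so there is nothing further to reconcile; your handling of the degenerate cases $|Q|=0$ or $\Vert f\Vert_{L^{q,\infty}(Q)}\in\{0,\infty\}$ is also appropriate.
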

	
	\subsection{ Sharp Maximal Pointwise Estimates}
	\quad\quad In this subsection, we provide a pointwise estimate for the sharp maximal function of the multilinear Littlewood--Paley $g_{\lambda}^{*}$ operators by other classes of maximal functions. The following is the main result of this subsection.
	\begin{theorem}\label{Theorem 2.1} 
	Suppose that $g_{\lambda}^{*}$ is the new multilinear Littlewood--Paley function with generalized kernel as in Definition \ref{definition1.1} and $\sum_{k=1}^{\infty}C_{k}<\infty$. If $0<\delta<1/m$ and $0<\eta<\infty$, then there exists a constant $C>0$ such that 
	\begin{align*}
	M_{\delta,\varphi,\eta}^{\sharp, \triangle}(g_{\lambda}^{*}(\vec{f}))(x)\leq 
	C\mathcal{M}_{q',\varphi,\eta}(\vec f)(x)
	\end{align*}
 for all bounded measurable functions $\vec 
f=(f_1,\ldots,f_m)$ with compact support.
\end{theorem}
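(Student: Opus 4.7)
The plan is to prove a pointwise bound on the sharp maximal function by estimating the dyadic sharp maximal average over a cube $Q = Q(x_Q, r) \ni x$ in the two regimes $r<1$ and $r \geq 1$ separately, using the standard ``good part / bad part'' multilinear decomposition. Fix $Q^* = 8\sqrt{n}Q$ and split each $f_j = f_j^0 + f_j^\infty$ with $f_j^0 := f_j\chi_{Q^*}$. Expanding the product gives $\prod_j f_j = \sum_{\vec\alpha\in \{0,\infty\}^m}\prod_j f_j^{\alpha_j}$; by sublinearity of $g_\lambda^*$ and the inequality $(a+b)^\delta \le a^\delta + b^\delta$ valid for $\delta<1$,
\[
|g_\lambda^*(\vec f)(y)|^\delta \le \sum_{\vec\alpha} |g_\lambda^*(f_1^{\alpha_1},\dots,f_m^{\alpha_m})(y)|^\delta.
\]

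For the case $r<1$ I would subtract the constant $c_Q := \sum_{\vec\alpha\neq\vec 0}|g_\lambda^*(\vec f^{\vec\alpha})(x_Q)|^\delta$. The $\vec\alpha=\vec 0$ (purely local) piece is handled by Kolmogorov's inequality (Lemma~\ref{Lemma2.7}) together with the weak $(s_1,\dots,s_m)\to(s,\infty)$ bound from Definition~\ref{definition1.1}(ii), and finally H\"older in each factor using $s_j\le q'$, giving a bound by $C\prod_j(|Q^*|^{-1}\int_{Q^*}|f_j|^{q'})^{1/q'} \le C\,\mathcal{M}_{q',\varphi,\eta}(\vec f)(x)$ (since $\varphi(Q^*)\lesssim 1$ when $r<1$). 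For a mixed $\vec\alpha\neq\vec 0$, use $|a^\delta-b^\delta|\le |a-b|^\delta$ and sublinearity to reduce to the kernel-difference bound
\[
|g_\lambda^*(\vec f^{\vec\alpha})(y)-g_\lambda^*(\vec f^{\vec\alpha})(x_Q)| \le \left(\iint_{\mathbb R^{n+1}_+}\!\!\left(\tfrac{t}{|z|+t}\right)^{\!n\lambda}\!\!\left|\int [K_t(y-z,\vec u)-K_t(x_Q-z,\vec u)]\prod f_j^{\alpha_j}\,d\vec u\right|^{2}\!\frac{dz\,dt}{t^{n+1}}\right)^{\!1/2},
\]
after a translation $z\mapsto y-z$ (resp. $x_Q-z$) matching the Poisson-type factor appearing in~\eqref{1.4}. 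I would dyadically decompose $(\mathbb R^n)^m = \bigcup_k (\Delta_{k+2})^m\setminus(\Delta_{k+1})^m$, apply Minkowski's integral inequality (legitimate because $q\le 2$, so $L^2(L^q)\ge L^q(L^2)$), use H\"older in $\vec u$ with exponents $(q,q')$, and invoke~\eqref{1.4} on each annulus. Since the cubes $\Delta_{k+2}$ contain $x$, each annular factor is bounded by $\varphi(\Delta_{k+2})^{m\eta/q'}\mathcal{M}_{q',\varphi,\eta}(\vec f)(x)$; taking $N$ large enough so that $(1+2^k|y-x_Q|)^{-N}\varphi(\Delta_{k+2})^{m\eta/q'}$ is geometrically summable against $C_k$, and using $\sum_k C_k<\infty$, gives a total bound of $C\mathcal{M}_{q',\varphi,\eta}(\vec f)(x)$. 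Averaging and taking the $\delta$-th root settles Case~I.

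For $r\ge 1$ we bound $\frac{1}{\varphi(Q)^\eta|Q|}\int_Q|g_\lambda^*(\vec f)|^\delta dy$ directly, without the subtracted constant. The $\vec\alpha=\vec 0$ term is again Kolmogorov + weak boundedness; the factor $\varphi(Q)^{-\eta}$ absorbs the $\varphi(Q^*)^{m\eta\delta/q'}$ that appears when rescaling averages, because $m\delta/q' < 1$ (here is where the hypothesis $\delta < 1/m$ is used, together with $q'>1$). For the mixed terms, Minkowski's integral inequality together with the size condition~\eqref{1.1} yields the pointwise estimate $g_\lambda^*(\vec f^{\vec\alpha})(y)\le C\int\prod|f_j^{\alpha_j}(u_j)|(\sum|y-u_j|)^{-mn}(1+\sum|y-u_j|)^{-N}\,d\vec u$; dyadic decomposition and H\"older in $\vec u$ with exponents $(q,q')$ reduce matters to summing $\sum_k 2^{-k\epsilon}\prod_j(\frac{1}{|\Delta_{k+2}|}\int_{\Delta_{k+2}}|f_j|^{q'})^{1/q'}$. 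Since $r\ge 1$, the $(1+\sum|y-u_j|)^{-N}$ factor contributes a negative power of $\varphi(Q)$ large enough to compensate the $\varphi(Q)^{-\eta}$ outside and the $\varphi(\Delta_{k+2})^{m\eta/q'}$ inside.

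The main obstacle is the combinatorial bookkeeping around condition~\eqref{1.4}, which mixes the $L^2$ norm in $(z,t)$ (outer) with the $L^q$ norm in $\vec u$ (inner) in the ``wrong'' order for Minkowski; one must carefully invoke $q\le 2$ to swap the two and recover the estimate on each annular region of $\vec u$, and then balance three separate geometric factors: the factor $C_k$ from \eqref{1.4}, the power $(1+2^k|y-x_Q|)^{-N}$ tunable by $N$, and the growth $\varphi(\Delta_{k+2})^{m\eta/q'}$ produced when replacing plain averages by $A_{\vec p}^\theta(\varphi)$-style averages. The constraint $\delta<1/m$ is needed exactly to make the excess $\varphi$-powers absorbable in Case~II.
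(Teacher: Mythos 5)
Your outline follows the paper's strategy quite closely (split $f_j=f_j^0+f_j^\infty$ at a dilate $Q^*$, Kolmogorov's inequality (Lemma \ref{Lemma2.7}) plus the weak $(s_1,\dots,s_m)\to(s,\infty)$ bound for the all--local piece, annular decomposition with Minkowski/H\"older and \eqref{1.4} together with $\sum_k C_k<\infty$ when $r<1$, and the size condition \eqref{1.1} with $N$ large when $r\ge 1$, the $\varphi$--powers being absorbed exactly as you say using $\delta<1/m$). However, there is a genuine gap in Case 1 for $m\ge 2$, caused by your choice of subtracted constant $c_Q:=\sum_{\vec\alpha\neq\vec 0}|g_{\lambda}^{*}(\vec f^{\vec\alpha})(x_Q)|^{\delta}$. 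The sharp maximal average requires a \emph{two--sided} control of $\bigl||g_{\lambda}^{*}(\vec f)(\mu)|^{\delta}-c_Q\bigr|$, but your reduction only gives the upper direction: from $|g_{\lambda}^{*}(\vec f)(\mu)|^{\delta}\le\sum_{\vec\alpha}|g_{\lambda}^{*}(\vec f^{\vec\alpha})(\mu)|^{\delta}$ you control $|g_{\lambda}^{*}(\vec f)(\mu)|^{\delta}-c_Q$. For the other direction you would need $\sum_{\vec\alpha\neq\vec 0}|g_{\lambda}^{*}(\vec f^{\vec\alpha})(\mu)|^{\delta}\le |g_{\lambda}^{*}(\vec f)(\mu)|^{\delta}+|g_{\lambda}^{*}(\vec f^{\,0})(\mu)|^{\delta}$, and this is false in general: writing $g_{\lambda}^{*}$ as the $L^{2}(dz\,dt/t^{n+1})$ norm of a vector--valued function and decomposing $F=F_{0}+\sum_{\vec\alpha\neq\vec 0}F_{\vec\alpha}$, the mixed pieces may cancel one another (e.g.\ $\sum_{\vec\alpha\neq\vec 0}F_{\vec\alpha}\approx -F_{0}$), so the sum of the $\delta$-th powers of their norms can exceed $\|F\|^{\delta}+\|F_{0}\|^{\delta}$ as soon as there are at least two mixed terms and $\delta<1$. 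Nor can you rescue $c_Q$ by bounding each $g_{\lambda}^{*}(\vec f^{\vec\alpha})(x_Q)$ individually by $C\mathcal{M}_{q',\varphi,\eta}(\vec f)(x)$: using only the size condition \eqref{1.1}, the scales with $2^{k}r\lesssim 1$ each contribute a bounded amount and their number is $\simeq\log(1/r)$, which diverges as $r\to 0$ --- this is precisely why the difference condition \eqref{1.4} must be applied to the mixed part as a whole in the regime $r<1$.

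The fix is the one the paper uses: subtract the single constant
\begin{align*}
A:=\left(\iint_{\mathbb{R}_{+}^{n+1}}\left(\frac{t}{|z|+t}\right)^{n\lambda}\left|\int_{(\mathbb{R}^{n})^{m}}K_{t}(\mu_{0}-z,\vec y)\sum_{\vec\alpha\neq\vec 0}\prod_{j=1}^{m}f_{j}^{\alpha_{j}}(y_{j})\,d\vec y\right|^{2}\frac{dz\,dt}{t^{n+1}}\right)^{\frac12},
\end{align*}
i.e.\ the $g$-type norm, at a fixed point, of the kernel acting on the \emph{sum} of all mixed products. Then $\bigl||g_{\lambda}^{*}(\vec f)(\mu)|^{\delta}-A^{\delta}\bigr|\le|g_{\lambda}^{*}(\vec f)(\mu)-A|^{\delta}$, and the reverse triangle inequality in $L^{2}(dz\,dt/t^{n+1})$ gives $|g_{\lambda}^{*}(\vec f)(\mu)-A|\le g_{\lambda}^{*}(f_{1}^{0},\dots,f_{m}^{0})(\mu)+\sum_{\vec\alpha\neq\vec 0}\|F_{\vec\alpha}(\mu)-F_{\vec\alpha}(\mu_{0})\|_{L^{2}}$ two--sidedly; after that, the rest of your Case 1 (Minkowski, H\"older with $(q,q')$, \eqref{1.4}, $N\ge m\eta/q'$, $\sum_kC_k<\infty$) goes through. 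A secondary point: the paper takes the comparison point $\mu_{0}\in 4Q\setminus 3Q$ rather than the center $x_{Q}$, so that $|\mu-\mu_{0}|\sim r$ uniformly and the cubes $\Delta_{k}=Q(\mu_{0},2^{k}\sqrt{mn}\,|\mu-\mu_{0}|)$ at the relevant scales automatically contain $x$; with your centering at $x_{Q}$ the statement ``each $\Delta_{k+2}$ contains $x$'' is only true for the annuli that actually meet $\operatorname{supp}f_{j}^{\infty}$ (the remaining ones contribute zero because $|y-x_Q|$ may be arbitrarily small), and this needs to be said explicitly. Your Case 2 matches the paper's argument.
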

\begin{proof}
\noindent{\it \rm} 
Fix a pioint $x\in\mathbb{R}^n$ and for any dyadic cube $ Q:=Q(x_0,r)\ni x$, to prove the Theorem \ref{Theorem 2.1}, we examine two cases about $ r $: $r<1$ and $r\geq1$.

\noindent{\it \rm \textbf{Case 1}:} $r<1$. Since $0<\delta<1/m< 1$, $\eta>0$ 
and $\left | \left | A_{1}  \right | ^{t} -  \left | A_{2}  \right | ^{t} 
\right | \le \left | A_{1}- A_{2}  \right | ^{t} $ for $0<t<1$, for any 
constant $A$, we have
\begin{align*}
	\left(\frac1{|Q|}\int_Q \left||g_{\lambda}^{*}(\vec f)(\mu)|^\delta-|A|^\delta \right| 
	d\mu\right)^{\frac1\delta}\leq
	\left(\frac1{|Q|}\int_Q \left|g_{\lambda}^{*}(\vec f)(\mu)-A \right|^\delta 
	d\mu\right)^{\frac1\delta}.
\end{align*}

Let $Q^*:=14n\sqrt{nm}Q$, we split each $f_j$ as 
$f_j=f_j^0+f_j^\infty=f_j \chi_{Q^*}+f_j \chi_{(Q^*)^{c}}$. Write
\begin {align*}
\prod_{j=1}^mf_j(y_j)
&=\prod_{j=1}^m f_j^{0}(y_j)+\sum_{(\alpha_1,\ldots,\alpha_m)\in 
	\mathscr{L}}f_1^{\alpha_1}(y_1)\cdots f_m^{\alpha_m}(y_m),
\end {align*}
where  $\mathscr{L}:=\{(\alpha_1,\ldots,\alpha_m)$: there is at least one $\alpha_j=\infty\}$. 

Take $\mu_0\in 4Q\setminus 3Q$ and

	$A:=\left(\iint_{\mathbb{R}_{+}^{n+1}}\left(\frac{t}{|z|+t}\right)^{n\lambda}\left|\int_{({\mathbb{R}^n})^m}K_{t}(\mu_{0}-z,\vec{y})\sum_{(\alpha_1,\ldots,\alpha_m)\in\mathscr{L}}f_{1}^{\alpha_{1}}(y_{1})\cdots f_{m}^{\alpha_{m}}(y_{m})d\vec{y}\right|^{2}\frac{dzdt}{t^{n+1}}\right)^{\frac{1}{2}}.$
 
 Then for any $\mu\in Q$,
 \begin  {align*}
 &| g_{\lambda}^{*}(\vec{f})(\mu)-A|
\leq \Bigg(\iint_{\mathbb{R}_{+}^{n+1}}\left(\frac{t}{|z|+t}\right)^{n\lambda}\bigg|\int_{({\mathbb{R}^n})^m}\bigg[K_{t}(\mu-z,\vec{y})\prod_{j=1}^{m}f_{j}(y_{j})\\
 &\quad\quad\quad\quad\quad\quad-K_{t}(\mu_{0}-z,\vec{y})\sum_{(\alpha_1,\ldots,\alpha_m)\in\mathscr{L}}f_{1}^{\alpha_{1}}(y_{1}) \cdots f_{m}^{\alpha_{m}}(y_{m})\bigg]d\vec{y}\bigg|^{2}\frac{dzdt}{t^{n+1}}\Bigg)^{\frac{1}{2}}\\
 &\quad\quad\quad\quad\quad\leq\Bigg(\iint_{\mathbb{R}_{+}^{n+1}}\left(\frac{t}{|z|+t}\right)^{n\lambda}\bigg|\int_{({\mathbb{R}^n})^m}K_{t}(\mu-z,\vec{y})\prod_{j=1}^{m}f_{j}^{0}(y_{j})d\vec{y}\bigg|^{2}\frac{dzdt}{t^{n+1}}\Bigg)^{\frac{1}{2}}\\
  &\quad\quad\quad\quad\quad\quad+\sum_{(\alpha_1,\ldots,\alpha_m)\in\mathscr{L}}\Bigg(\iint_{\mathbb{R}_{+}^{n+1}}\left(\frac{t}{|z|+t}\right)^{n\lambda}\bigg|\int_{({\mathbb{R}^n})^m}\big[K_{t}(\mu-z,\vec{y})-K_{t}(\mu_{0}-z,\vec{y})\big]\\
  &\quad\quad\quad\quad\quad\quad\times f_{1}^{\alpha_{1}}(y_{1}) \cdots f_{m}^{\alpha_{m}}(y_{m})d\vec{y}\bigg|^{2}\frac{dzdt}{t^{n+1}}\Bigg)^{\frac{1}{2}}.
 \end {align*}
 
 Thus
\begin  {align*}
&\left(\frac1{|Q|}\int_Q | g_{\lambda}^{*}(\vec{f})(\mu)-A|^\delta d\mu
\right)^{\frac1\delta}\\
&\leq C\left(\frac1{|Q|}\int_Q |g_{\lambda}^{*}(f_1^0,\ldots,f_m^0)(\mu)|^\delta 
d\mu\right)^{\frac1\delta}\\
&\quad +C\sum_{(\alpha_1,\ldots,\alpha_m)\in \mathscr{L}}\Bigg(\frac{1}{|Q|}\int_{ Q}\bigg(\iint_{\mathbb{R}_{+}^{n+1}}\left(\frac{t}{|z|+t}\right)^{n\lambda}\bigg|\int_{({\mathbb{R}^n})^m}\big[K_{t}(\mu-z,\vec{y})-K_{t}(\mu_{0}-z,\vec{y})\big]\\
&\quad\times f_{1}^{\alpha_{1}}(y_{1}) \cdots f_{m}^{\alpha_{m}}(y_{m})d\vec{y}\bigg|^{2}\frac{dzdt}{t^{n+1}}\bigg)^{\frac{\delta}{2}}d\mu\Bigg)^{\frac{1}{\delta}}\\
&:=\uppercase\expandafter{\romannumeral+1}+\uppercase\expandafter{\romannumeral+2}\\
&:=\uppercase\expandafter{\romannumeral+1}+\sum_{(\alpha_1,\ldots,\alpha_m)\in \mathscr{L}}\uppercase\expandafter{\romannumeral+2}_{\alpha_1,\ldots,\alpha_m}.
\end {align*}
By $g_{\lambda}^{*}:L^{s_1}\times\cdots\times L^{s_m}\to L^{s,\infty}$, Lemma 
\ref{Lemma2.7} and H\"{o}lder's inequality, we can get that
\begin {align*}
\uppercase\expandafter{\romannumeral+1}
&\leq C|Q|^{-\frac{1}{s}}\Vert g_{\lambda}^{*}(f_1^0,\ldots,f_m^0) \Vert_{L^{s,\infty}}
\leq 
C\prod_{j=1}^m\left(\frac1{|Q^*|}\int_{Q^*}|f_j(y_j)|^{s_j}dy_j\right)^{\frac1{s_j}}\\
&\leq 
C\prod_{j=1}^m\left(\frac1{|Q^*|}\int_{Q^*}|f_j(y_j)|^{q'}dy_j\right)^{\frac1{q'}}
\leq 
C\prod_{j=1}^m\left(\frac1{\varphi(Q^*)^\eta|Q^*|}\int_{Q^*}|f_j(y_j)|^{q'}dy_j\right)^{\frac1{q'}}\\
&\leq C\mathcal{M}_{q',\varphi,\eta}(\vec f)(x).
\end {align*}

Take   
$\Delta_k:=Q(\mu_0, 2^{k}\sqrt{mn}|\mu-\mu_0|)$, $  \mu\in Q $, $k\in \mathbb{N}_+$. Note that  $\Delta_2 \subset Q^*$, so 
$(\mathbb{R}^n)^m\setminus 
(Q^*)^m\subset (\mathbb{R}^n)^m\setminus (\Delta_2)^m$. When $\mu\in Q$, $\mu_0\in 
4Q\setminus 3Q$, we can infer that $|\mu-\mu_0|\sim r$. Since each term of 
$\mathscr{L}$ contains at least one $\alpha_j=\infty, j=1, \ldots,m$, we select 
one of them for discussion. Choosing $N\geq m\eta/q'$, by H\"{o}lder's inequality, Minkowski's inequality and \eqref{1.4}, we have
\begin{align*}
%&\left(\frac1{|Q|}\int_Q|g_{\lambda}^{*}(f_1^{\alpha_1},\ldots,f_m^{\alpha_m})(\mu)-g_{\lambda}^{*}(f_1^{\alpha_1},\ldots,f_m^{\alpha_m})(\mu_0)|^\deltad\mu\right)^{\frac1\delta}\\
%&\leq \frac1{|Q|}\int_Q|g_{\lambda}^{*}(f_1^{\alpha_1},\ldots,f_m^{\alpha_m})(\mu)-g_{\lambda}^{*}(f_1^{\alpha_1},\ldots,f_m^{\alpha_m})(\mu_0)|d\mu\\
	\uppercase\expandafter{\romannumeral+2}_{\alpha_1,\ldots,\alpha_m}
	%&\leq \frac{1}{|Q|}\int_{ Q}\Bigg(\iint_{\mathbb{R}_{+}^{n+1}}\left(\frac{t}{|z|+t}\right)^{n\lambda}\Big(\int_{(\mathbb{R}^{n})^{m}}\left|K_{t}(\mu-z,\vec{y})-K_{t}(\mu_{0}-z,\vec{y})\right|\\
    %&\quad \times\prod_{j=1}^{m}\left|f_{j}^{\alpha_{j}}(y_{j})\right|d\vec{y}\Big)^{2} \frac{    dzdt}{t^{n+1}}\Bigg)^{\frac{1}{2}}d\mu\\
	&\leq \frac{1}{|Q|}\int_{ Q}\Bigg(\iint_{\mathbb{R}_{+}^{n+1}}\left(\frac{t}{|z|+t}\right)^{n\lambda}\Big(\int_{(\mathbb{R}^{n})^{m}\backslash (Q^\ast)^{m}}\left|K_{t}(\mu-z,\vec{y})-K_{t}(\mu_{0}-z,\vec{y})\right|\\
	 &\quad \times\prod_{j=1}^{m}\left|f_{j}(y_{j})\right|d\vec{y}\Big)^{2} \frac{    dzdt}{t^{n+1}}\Bigg)^{\frac{1}{2}}d\mu\\
	%&\leq \frac{1}{|Q|}\int_{ Q}\Bigg(\iint_{\mathbb{R}_{+}^{n+1}}\left(\frac{t}{|z|+t}\right)^{n\lambda}\Big(\sum_{k=1}^{\infty}\int_{\left(\Delta_{k+2}\right)^{m}\backslash\left(\Delta_{k+1}\right)^{m}}\left|K_{t}(\mu-z,\vec{y})-K_{t}(\mu_{0}-z,\vec{y})\right|\\
	 %&\quad \times\prod_{j=1}^{m}\left|f_{j}(y_{j})\right|d\vec{y}\Big)^{2} \frac{    dzdt}{t^{n+1}}\Bigg)^{\frac{1}{2}}d\mu\\
	&\leq \frac{1}{|Q|}\int_{ Q}\sum_{k=1}^{\infty}\Bigg(\iint_{\mathbb{R}_{+}^{n+1}}\left(\frac{t}{|z|+t}\right)^{n\lambda}\Big(\int_{\left(\Delta_{k+2}\right)^{m}\backslash\left(\Delta_{k+1}\right)^{m}}\left|K_{t}(\mu-z,\vec{y})-K_{t}(\mu_{0}-z,\vec{y})\right|\\
	 &\quad \times\prod_{j=1}^{m}\left|f_{j}(y_{j})\right|d\vec{y}\Big)^{2} \frac{    dzdt}{t^{n+1}}\Bigg)^{\frac{1}{2}}d\mu\\	
	%&\leq \frac{1}{|Q|}\int_{ Q}\sum_{k=1}^{\infty}\Bigg(\iint_{\mathbb{R}_{+}^{n+1}}\left(\frac{t}{|z|+t}\right)^{n\lambda}\Big(\int_{\left(\Delta_{k+2}\right)^{m}\backslash\left(\Delta_{k+1}\right)^{m}}\big|K_{t}(\mu-z,\vec{y}) \\
	%&\quad -K_{t}(\mu_{0}-z,\vec{y})\big|^{q}d\vec{y}\Big)^{\frac{2}{q}}  \Big( \int_{\left(\Delta_{k+2}\right)^{m}\backslash\left(\Delta_{k+1}\right)^{m}} \prod_{j=1}^{m}\left|f_{j}\left(y_{j}\right)\right|^{q'} d\vec{y} \Big) ^\frac{2}{q'}\frac{dzdt}{t^{n+1}}\Bigg)^{\frac{1}{2}}d\mu\\
	&\leq \frac{1}{|Q|}\int_{ Q}\sum_{k=1}^{\infty}\Bigg(\iint_{\mathbb{R}_{+}^{n+1}}\left(\frac{t}{|z|+t}\right)^{n\lambda}\Big(\int_{\left(\Delta_{k+2}\right)^{m}\backslash\left(\Delta_{k+1}\right)^{m}}\big|K_{t}(\mu-z,\vec{y})\\
	&\quad-K_{t}(\mu_{0}-z,\vec{y})\big|^{q}d\vec{y}\Big)^{\frac{2}{q}}\frac{dzdt}{t^{n+1}}\Bigg)^{\frac{1}{2}} \prod_{j=1}^{m} \Big( \int_{\Delta_{k+2}} 
	\left|f_{j}\left(y_{j}\right)\right|^{q'}  d{y_{j}} \Big) ^\frac{1}{q'}d\mu\\
	&\leq \frac{C}{|Q|}\int_Q \sum_{k=1}^\infty C_k 
	2^{\frac{-kmn}{q'}}|\mu-\mu_0|^{\frac{-mn}{q'}}(1+2^k|\mu-\mu_0|)^{-N}  
	\prod_{j=1}^m\left(\int_{\Delta_{k+2}}|f_j(y_j)|^{q'} dy_j 
	\right)^{\frac 1{q'}} d\mu\\
	&\leq \frac{C}{|Q|}\int_Q \sum_{k=1}^\infty \frac{C_k 
	\varphi(\Delta_{k+2})^{\frac{m\eta}{q'}}|\Delta_{k+2}|^{\frac{m}{q'}}}{(1+2^kr)^N(2^kr)^{\frac{mn}{q'}}}
	\prod_{j=1}^m\left(\frac{1}{\varphi(\Delta_{k+2})^{\eta}|\Delta_{k+2}|}\int_{\Delta_{k+2}}|f_j(y_j)|^{q'}dy_j\right)^{\frac 1{q'}}d\mu\\
	&\leq C\mathcal{M}_{q',\varphi,\eta}(\vec f)(x).
\end{align*}
Thus, $\uppercase\expandafter{\romannumeral+2}\leq C\mathcal{M}_{q',\varphi,\eta}(\vec f)(x)$.

\noindent{\it \rm \textbf{Case 2}:} $r\geq1$. Let $\widetilde{Q}^*:=8Q$, we split each $f_j$ as 
$f_j=f_j^0+f_j^\infty=f_j \chi_{\widetilde{Q}^*}+f_j \chi_{(\widetilde{Q}^*)^{c}}$. Write
\begin {align*}
\prod_{j=1}^mf_j(y_j)
&=\prod_{j=1}^m f_j^{0}(y_j)+\sum_{(\alpha_1,\ldots,\alpha_m)\in 
	\mathscr{L}}f_1^{\alpha_1}(y_1)\cdots f_m^{\alpha_m}(y_m),
\end {align*}
where  $\mathscr{L}:=\{(\alpha_1,\ldots,\alpha_m)$: there is at least one $\alpha_j=\infty\}$.

 Since $0<\delta<\frac{1}{m}< 1$ and 
$\eta>0$, we obtain
\begin {align*}
&\left(\frac1{\varphi(Q)^\eta|Q|}\int_Q |g_{\lambda}^{*}(\vec f)(\mu)|^\delta 
d\mu\right)^{\frac1\delta}\\
&\leq C\frac{1}{\varphi(Q)^{\eta/\delta}}\left(\frac1{|Q|}\int_Q 
|g_{\lambda}^{*}(f_1^0,\ldots,f_m^0)(\mu)|^\delta d\mu\right)^{\frac1\delta}\\
& +C\frac{1}{\varphi(Q)^{\eta/\delta}}\sum_{\alpha_1,\ldots,\alpha_m\in 
	\mathscr{L}}\left(\frac1{|Q|}\int_Q|g_{\lambda}^{*}(f_1^{\alpha_1},\ldots,f_m^{\alpha_m})(\mu)|^\delta
d\mu\right)^{\frac1\delta}\\
&:=\uppercase\expandafter{\romannumeral+3}+\uppercase\expandafter{\romannumeral+4}\\
&:=\uppercase\expandafter{\romannumeral+3}+\sum_{\alpha_1,\ldots,\alpha_m\in \mathscr{L}}\uppercase\expandafter{\romannumeral+4}_{\alpha_{1},\ldots,\alpha_m}.
\end {align*}

By Lemma \ref{Lemma2.7}, $g_{\lambda}^{*}:L^{s_1}\times\cdots\times L^{s_m}\to L^{s,\infty}$ and H\"{o}lder's inequality, we conclude that
\begin {align*}
\uppercase\expandafter{\romannumeral+3}
&\leq C\frac{1}{\varphi(Q)^{m\eta}}|Q|^{-\frac{1}{s}}\Vert g_{\lambda}^{*}(f_1^0,\ldots,f_m^0) 
\Vert_{L^{s,\infty}}
\leq 
C\frac{1}{\varphi(Q)^{m\eta}}\prod_{j=1}^m\left(\frac1{|Q^{*}|}\int_{Q^{*}}|f_j(y_j)|^{s_j}dy_j\right)^{\frac1{s_j}}\\
&\leq 
C\frac{1}{\varphi(Q)^{m\eta}}\prod_{j=1}^m\left(\frac1{|Q^*|}\int_{Q^{*}}|f_j(y_j)
|^{q'}dy_j\right)^{\frac1{q'}}
\leq 
C\prod_{j=1}^m\left(\frac1{\varphi(Q^{*})^\eta|Q^{*}|}\int_{Q^{*}}|f_j(y_j)|^{q'}dy_j\right)^{\frac1{q'}}\\
&\leq C\mathcal{M}_{q',\varphi,\eta}(\vec f)(x).
\end {align*}

 It is easy to get that $ {\textstyle \sum_{j=1}^{m}} |\mu-y_j|\sim 2^{k}r$ for $\mu\in Q$ and $(y_1,y_2,\ldots,y_{m})\in (2^{k+3}Q)^m \setminus (2^{k+2}Q)^m$. Since $\mu\in Q$, $\mu_0\in 4Q\setminus 3Q$, we can obtain that $|\mu-\mu_0|\sim r$. Taking $N >m\eta/q'$, by Minkowski's inequality and the size condition \eqref{1.1}, so
\begin{align*}
	%& C\frac{1}{|Q|}\int_{ Q}\left(\iint_{\mathbb{R}_{+}^{n+1}}\left(\frac{t}{|\mu-z|+t}\right)^{n\lambda}\left(\int_{{({\mathbb{R}^n})^m}\backslash\left(Q^{*}\right)^{m}}\left|K_{t}(z,\vec{y})\right|\cdot\prod_{j=1}^{m}\left|f_{j}(y_{j})\right|d\vec{y}\right) ^{2} \frac{dzdt}{t^{n+1}}\right)^{\frac{1}{2}}d\mu\\
	\uppercase\expandafter{\romannumeral+4}_{\alpha_1,\ldots,\alpha_m}&\leq \frac C{|Q|}\int_Q| g_{\lambda}^{*}(f_1^{\alpha_1},\ldots,f_m^{\alpha_m})(\mu)|d\mu\\
	%&\leq\frac{C}{|Q|}\int_{ Q}\left(\iint_{\mathbb{R}_{+}^{n+1}}\left(\frac{t}{|\mu-z|+t}\right)^{n\lambda}\left|\int_{({\mathbb{R}^n})^m}K_{t}(z,\vec{y})\prod_{j=1}^{m}f_{j}^{\alpha_{j}}(y_{j})d\vec{y}\right| ^{2} \frac{dzdt}{t^{n+1}}\right)^{\frac{1}{2}}d\mu\\
	&\leq\frac{C}{|Q|}\int_{ Q}\Bigg(\iint_{\mathbb{R}_{+}^{n+1}}\left(\frac{t}{|\mu-z|+t}\right)^{n\lambda}\Big(\int_{{({\mathbb{R}^n})^m}\backslash\left({\widetilde Q}^*\right)^{m}}\left|K_{t}(z,\vec{y})\right|\\
	&\quad\times \prod_{j=1}^{m}\left|f_{j}(y_{j})\right|d\vec{y}\Big) ^{2} \frac{dzdt}{t^{n+1}}\Bigg)^{\frac{1}{2}}d\mu\\
	%&\leq C\frac{1}{|Q|}\int_{ Q}\Bigg(\iint_{\mathbb{R}_{+}^{n+1}}\left(\frac{t}{|\mu-z|+t}\right)^{n\lambda}\Big(\int_{{({\mathbb{R}^n})^m}\backslash\left(\Delta_{2}\right)^{m}}\left|K_{t}(z,\vec{y})\right|\\
	%&\quad\times\prod_{j=1}^{m}\left|f_{j}(y_{j})\right|d\vec{y}\Big) ^{2} \frac{dzdt}{t^{n+1}}\Bigg)^{\frac{1}{2}}d\mu\\
	&\leq C\frac{1}{|Q|}\int_{ Q}\int_{{({\mathbb{R}^n})^m}\backslash\left({\widetilde Q}^*\right)^{m}}\left(\iint_{\mathbb{R}_{+}^{n+1}}\left(\frac{t}{|\mu-z|+t}\right)^{n\lambda}|K_{t}(z,\vec{y})|^{2}\frac{dzdt}{t^{n+1}}\right)^{\frac{1}{2}}\prod_{j=1}^{m}|f_{j}(y_{j})|d\vec{y}d\mu\\
	&\leq C\frac{1}{|Q|}\int_{ Q}\sum_{k=1}^{\infty}\int_{\left(2^{k+3}Q\right)^{m}\backslash\left(2^{k+2}Q\right)^{m}}\frac{1}{\left(\sum_{j=1}^{m}|\mu-y_{j}|\right)^{mn}(1+\sum_{j=1}^{m}|\mu-y_{j}|)^{N}}\\
	&\quad\times\prod_{j=1}^{m}|f_{j}(y_{j})|d\vec{y}d\mu\\
	%&\leq C\frac{1}{|Q|}\int_{ Q}\sum_{k=1}^{\infty}\left(2^{k-1}|\mu-\mu_{0}|\right)^{-mn}\left(1+2^{k-1}|\mu-\mu_{0}|\right)^{-N}\\
	%&\quad\times\left(\int_{\left(\Delta_{k+2}\right)^{m}}\prod_{j=1}^{m}|f_{j}(y_{j})|d\vec{y}\right)d\mu \\
	&\leq C\frac{1}{|Q|}\int_{ Q}\sum_{k=1}^{\infty}\frac{|2^{k+3}Q|^{m}}{(2^{k}r)^{mn}(1+2^{k}r)^{N}}\prod_{j=1}^{m}\left(\frac{1}{|2^{k+3}Q|}\int_{2^{k+3}Q }|f_{j}(y_{j})|^{q'}dy_{j}\right)^{\frac{1}{q'}}d\mu\\
	&\leq C\frac{1}{|Q|}\int_{ Q}\sum_{k=1}^{\infty}\frac{|2^{k+3}Q|^{m}\varphi(2^{k+3}Q)^{\frac{m\eta}{q'}}}{(2^{k}r)^{mn}(1+2^{k}r)^{N}}\prod_{j=1}^{m}\left(\frac{1}{\varphi(2^{k+3}Q)^{\eta}|2^{k+3}Q|}\int_{2^{k+3}Q }|f_{j}(y_{j})|^{q'}dy_{j}\right)^{\frac{1}{q'}}d\mu\\
	&\leq C\frac{1}{|Q|}\int_{ Q}\sum_{k=1}^{\infty}\left(1+2^{k}r\right)^{\frac{m\eta}{q'}-N}\prod_{j=1}^{m}\left(\frac{1}{\varphi(2^{k+3}Q)^{\eta}|2^{k+3}Q|}\int_{2^{k+3}Q }|f_{j}(y_{j})|^{q'}dy_{j}\right)^{\frac{1}{q^{'}}}d\mu\\
	&\leq C\mathcal{M}_{q',\varphi,\eta}(\vec f)(x).
\end{align*}
Thus, $\uppercase\expandafter{\romannumeral+4}\leq 
C\mathcal{M}_{q',\varphi,\eta}(\vec f)(x)$.

Then, we complete the proof of the Theorem \ref{Theorem 2.1}.
\end{proof}
\subsection{The Boundedness of the $g_{\lambda}^{*}$ on weighted Lebesgue Spaces }
\quad\quad In this subsection, with the help of Theorem \ref{Theorem 2.1}, the boundedness of multilinear Littlewood--Paley $g_{\lambda}^{*}$ operator with generalized kernel on weighted Lebesgue spaces are obtained. The main theorem of this subsection is given below.
\begin{theorem}\label{Theorem 2.2} 
Suppose that $m\geq2$,  $g_{\lambda}^{*}$ is the new multilinear Littlewood--Paley function defined in  Definition \ref{definition1.1} with generalized kernel, $ {\textstyle 
	\sum_{k=1}^{\infty }}  C_{k} < \infty $,  $\vec \omega=(\omega_1,\ldots ,\omega_m) \in 
A_{\vec{p}/q'}^\infty(\varphi )$, $v_{\vec\omega}= {\textstyle \prod_{j=1}^{m}} \omega_j^{p/p_j}$, $ \vec p=(p_1, \ldots ,p_m) $ and $1/p = {\textstyle \sum_{j=1}^{m}}  1/p_j$. Then the following results hold.
\begin{itemize}
	\item [\rm (i)] If $q'<p_j<\infty$, $j=1,\ldots,m$, then there exists a constant $C>0$ such that
	\begin{align*}
		\Vert g_{\lambda}^{*}(\vec{f}) \Vert_{L^{p}(v_{\vec\omega})}\leq 
		C\prod_{j=1}^{m}\Vert f_j \Vert_{L^{p_j}(\omega_j)}.
	\end{align*}
	
	\item [\rm (ii)] If $q'\leq p_j<\infty$, $j=1,\ldots,m$ and at least one of 
	$p_j=q'$, then there exists a constant $C>0$ such that
	\begin{align*}
		\Vert g_{\lambda}^{*}(\vec{f}) \Vert_{L^{p,\infty}(v_{\vec\omega})}\leq 
		C\prod_{j=1}^{m}\Vert f_j \Vert_{L^{p_j}(\omega_j)}.
	\end{align*}
\end{itemize}
\end{theorem}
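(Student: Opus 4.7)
The plan is to combine Theorem~\ref{Theorem 2.1} with the good-$\lambda$ machinery (Lemmas~\ref{Lemma2.3}, \ref{Lemma2.4}) to transfer the sharp-maximal estimate to a norm estimate, and then to absorb the resulting multilinear maximal function using Lemmas~\ref{Lemma2.5} and \ref{Lemma2.6} after a rescaling by $q'$. First I note that the hypothesis $\vec\omega\in A_{\vec p/q'}^{\infty}(\varphi)$ combined with Lemma~\ref{Lemma2.2} yields $v_{\vec\omega}\in A_{mp/q'}^{\infty}(\varphi)\subset A_{\infty}^{\infty}(\varphi)$, and since $0<\delta<1/m\le q'/m$, monotonicity (Lemma~\ref{Lemma2.1}(i)) gives $v_{\vec\omega}\in A_{p/\delta}^{\infty}(\varphi)$, which is what is needed to run the sharp-maximal arguments.

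For part (i), fix $\delta\in(0,1/m)$ and $\eta=\eta_0$ as in Lemma~\ref{Lemma2.5}. Apply Lemma~\ref{Lemma2.3} to $|g_{\lambda}^{*}(\vec f)|^{\delta}$ with exponent $p/\delta$ and weight $v_{\vec\omega}$; taking $(p/\delta)$-th roots and recognising $M^{\sharp,\triangle}_{\delta,\varphi,\eta}$ gives
\begin{align*}
\|g_{\lambda}^{*}(\vec f)\|_{L^{p}(v_{\vec\omega})}
\le C\,\|M^{\sharp,\triangle}_{\delta,\varphi,\eta}(g_{\lambda}^{*}(\vec f))\|_{L^{p}(v_{\vec\omega})}.
\end{align*}
By Theorem~\ref{Theorem 2.1} the right-hand side is dominated by $\|\mathcal{M}_{q',\varphi,\eta_0}(\vec f)\|_{L^{p}(v_{\vec\omega})}$. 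Using $\mathcal{M}_{q',\varphi,\eta_0}(\vec f)=[\mathcal{M}_{\varphi,\eta_0}(|\vec f|^{q'})]^{1/q'}$ and invoking Lemma~\ref{Lemma2.5} with the tuple $\vec p/q'$ (legitimate since $p_j/q'>1$ and $\vec\omega\in A_{\vec p/q'}^{\infty}(\varphi)$), one obtains
\begin{align*}
\|\mathcal{M}_{q',\varphi,\eta_0}(\vec f)\|_{L^{p}(v_{\vec\omega})}
\le C\prod_{j=1}^{m}\||f_j|^{q'}\|_{L^{p_j/q'}(\omega_j)}^{1/q'}
=C\prod_{j=1}^{m}\|f_j\|_{L^{p_j}(\omega_j)},
\end{align*}
which closes part (i).

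For part (ii), the same scheme runs with the weak-type substitutes: apply Lemma~\ref{Lemma2.4} to $g_{\lambda}^{*}(\vec f)$ with the doubling test function $\psi(\lambda)=\lambda^{p}$ and weight $v_{\vec\omega}\in A_{\infty}^{\infty}(\varphi)$ to obtain
\begin{align*}
\|g_{\lambda}^{*}(\vec f)\|_{L^{p,\infty}(v_{\vec\omega})}
\le C\,\|M^{\sharp,\triangle}_{\delta,\varphi,\eta}(g_{\lambda}^{*}(\vec f))\|_{L^{p,\infty}(v_{\vec\omega})},
\end{align*}
then apply Theorem~\ref{Theorem 2.1} with $\eta=\theta_0$ from Lemma~\ref{Lemma2.6}, and finally change variables $\mu=\lambda^{q'}$ to rewrite $\|\mathcal{M}_{q',\varphi,\theta_0}(\vec f)\|_{L^{p,\infty}(v_{\vec\omega})}$ as a power of $\|\mathcal{M}_{\varphi,\theta_0}(|\vec f|^{q'})\|_{L^{p/q',\infty}(v_{\vec\omega})}$. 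Because at least one $p_j/q'=1$, the strong-type Lemma~\ref{Lemma2.5} is unavailable, and the weak-type Lemma~\ref{Lemma2.6} is precisely what handles this endpoint, producing $C\prod_{j=1}^{m}\|f_j\|_{L^{p_j}(\omega_j)}$ after undoing the rescaling.

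The main technical point is not any single estimate but the bookkeeping: one must verify that the $A_{\vec p/q'}^{\infty}(\varphi)$ assumption delivers simultaneously (a) the weight condition $v_{\vec\omega}\in A_{p/\delta}^{\infty}(\varphi)$ needed to invoke Lemmas~\ref{Lemma2.3}/\ref{Lemma2.4}, and (b) the multilinear $A_{\vec p/q'}^{\infty}(\varphi)$ structure needed to invoke Lemmas~\ref{Lemma2.5}/\ref{Lemma2.6} after the $q'$-scaling. Choosing $\delta<1/m$ (forced by Theorem~\ref{Theorem 2.1}) and $\eta\in\{\eta_0,\theta_0\}$ (forced by Lemmas~\ref{Lemma2.5}/\ref{Lemma2.6}) is compatible, so both parts go through without further complication.
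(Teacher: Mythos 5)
Your proposal is correct and follows essentially the same route as the paper: for the strong type, Lemma~\ref{Lemma2.3} applied to $|g_{\lambda}^{*}(\vec f)|^{\delta}$ with $v_{\vec\omega}\in A_{mp/q'}^{\infty}(\varphi)\subset A_{p/\delta}^{\infty}(\varphi)$, then Theorem~\ref{Theorem 2.1} and Lemma~\ref{Lemma2.5} after the $q'$-rescaling; for the weak type, the pointwise domination by the dyadic maximal function together with Lemma~\ref{Lemma2.4} (with $\psi(t)=t^{p}$), Theorem~\ref{Theorem 2.1} at $\eta=\theta_0$, and Lemma~\ref{Lemma2.6} at the endpoint $p_j/q'=1$. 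No substantive differences from the paper's argument.
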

\begin{proof}\rm (i) Take a $ \delta  $ such that $ 0< \delta < 1/m $. We select $ \eta =\eta _0 $ in Lemma \ref{Lemma2.5} for $ \vec \omega \in A_{\vec p/q'}^\infty(\varphi ) $. It derives from Lemma \ref{Lemma2.2} that $v_{\vec\omega}\in A_{mp/q'}^\infty(\varphi ) \subset A_{p/\delta } ^\infty(\varphi )$. Then, by Lemma \ref{Lemma2.3}, Theorem \ref{Theorem 2.1} and Lemma \ref{Lemma2.5}, we conclude that
\begin{align*}
	\| g_{\lambda}^{*}(\vec f) \|_{L^p(v_{\vec\omega})} 
	&= \| |g_{\lambda}^{*}(\vec f)|^{\delta } \| _{L^{p/\delta  
		}(v_{\vec\omega})}^{1/\delta  } 
	\leq \| M_{\varphi,\eta _0}^{\bigtriangleup }(|g_{\lambda}^{*}(\vec f)|^{\delta }) 
	\|_{L^{p/\delta }(v_{\vec\omega})}^{1/\delta }\\
	&\leq C \| M_{\delta,\varphi,\eta _0}^{\sharp, \triangle}(g_{\lambda}^{*}(\vec f)) 
	\|_{L^p(v_{\vec\omega})}
	\leq C\| \mathcal{M}_{q',\varphi,\eta _0}(\vec f) \|_{L^p(v_{\vec\omega})}\\
	&=C\| \mathcal{M}_{\varphi,\eta _0}(|\vec f|^{q'}) 
	\|_{L^{p/q'}(v_{\vec\omega})}^{1/q'}
	\leq C\prod_{j=1}^{m}\| |f_j|^{q'} 
	\|_{L^{p_j/q'}(\omega_j)}^{1/q'}\\
	&=C \prod_{j=1}^{m}\| f_j \|_{L^{p_j}(\omega_j)}.
\end{align*}
The expected result is achieved.

\rm (ii) We derive the weak type estimate for the new $ g_{\lambda}^{*}$ operator  with  generalized kernel on the weighted Lebesgue spaces for $ \theta _0 $ is taken by Lemma \ref{Lemma2.6} with $ \vec \omega \in A_{\vec p/q'}^\infty(\varphi ) $.
\begin{align*}
	\| g_{\lambda}^{*}( \vec f )   \|_{L^{p,\infty } \left ( \upsilon 
		_{\vec \omega }  \right ) }
	&=\sup_{t > 0}t \left | \nu _{\vec\omega }\left \{ x\in 
	\mathbb{R}^n: \left | g_{\lambda}^{*} (\vec f) ( x)  \right 
	|>t \right\} \right |^\frac{1}{p} \\
	&=\sup_{t > 0}\left [t^\delta  \left |\nu _{\vec\omega }\left \{ 
	x\in \mathbb{R}^n: \left |g_{\lambda}^{*} ( \vec f) (x) \right 
	|^\delta >t ^\delta \right \} \right |^\frac{\delta }{p}\right 
	]^\frac{1}{\delta } \\
	&=\sup_{t > 0}\left [t\left |\nu _{\vec\omega }\left \{ x\in 
	\mathbb{R}^n: \left |g_{\lambda}^{*} ( \vec f  ) ( x  ) \right 
	|^\delta >t   \right \} \right |^\frac{\delta }{p}\right]^\frac{1}{\delta } \\
	&=\left \|g_{\lambda}^{*}( \vec f  ) ^\delta  \right \|_{L^{\frac{p}{\delta } ,\infty } \left ( \upsilon _{\vec \omega }  \right ) }^{\frac{1}{\delta } } 
	\le \left \| {M}_{\varphi ,\theta _0}^{\bigtriangleup} \left ( \left | g_{\lambda}^{*}( \vec f  ) \right | ^\delta \right )   \right \|_{L^{\frac{p}{\delta } ,\infty } \left ( \upsilon _{\vec \omega }  \right ) }^{\frac{1}{\delta } } \\
	&=\sup_{t > 0}\left [t \left |\nu _{\vec\omega }\left \{ x\in 
	\mathbb{R}^n:M_{\varphi ,\theta _0}^{\bigtriangleup} \left ( \left |g_{\lambda}^{*} (\vec f) (x) \right |^\delta \right )  >t  \right \} \right |^\frac{\delta }{p}\right ]^\frac{1}{\delta }\\
	&=\sup_{t > 0}t \left |\nu _{\vec\omega }\left 
	\{ x\in \mathbb{R}^n: M_{\varphi ,\theta _0}^{\bigtriangleup}  \left ( \left |g_{\lambda}^{*} ( \vec f ) (x) \right |^\delta  \right ) >t ^\delta \right \} \right |^\frac{1 }{p} \\
	&=\sup_{t > 0}t \left |\nu _{\vec\omega }\left \{ x\in \mathbb{R}^n: \left (M_{\varphi ,\theta _0}^{\bigtriangleup} \left ( \left |g_{\lambda}^{*} ( \vec f ) (x) \right |^\delta \right ) \right )^\frac{1}{\delta } >t \right \} \right |^\frac{1 }{p} \\
	&:=H,\nonumber
\end{align*}

Next, we estimate $ H $. Take $ \psi (t ):=t ^{p} $, then $ 
\psi (2t )=2^{p} t ^{p} \le 2^{p}\psi (t ) $, so $ 
\psi $ satisfies the double condition. By Lemma \ref{Lemma2.4}, Lemma 
\ref{Lemma2.6} and Theorem \ref{Theorem 2.1}, we have
\begin{align*}
	H 
	&\le C \sup_{t > 0}t \left |\nu _{\vec\omega }\left 
	\{ x\in \mathbb{R}^n:M_{\delta,\varphi,\theta_0}^{\sharp,\bigtriangleup}  
	\left ( g_{\lambda}^{*} ( \vec f  ) \right )  ( x  ) >t  \right \} \right |^\frac{1 }{p} \\
	&=C\left \| M_{\delta ,\varphi ,\theta _0}^{\sharp ,\bigtriangleup} 
	\left(g_{\lambda}^{*}( \vec f  ) \right)\right \|_{L^{p,\infty } \left ( \upsilon _{\vec \omega }\right ) }\le C\left \| \mathcal{M}_{q',\varphi ,\theta _0} ( \vec f) \right \|_{L^{p,\infty } \left ( \upsilon _{\vec \omega }  \right ) } \\
	&=C \left \|\mathcal{M}_{\varphi ,\theta _0}( |\vec f|  ^{q'}) \right 
	\|_{L^{p/q' ,\infty } \left ( \upsilon _{\vec \omega }  \right ) 
	}^{1/q' } \le C\prod_{j=1}^{m} \left \||  f_j  | ^{q'} \right 
	\|_{L^{p_j/q' } ( \omega  _{j }   ) }^{1/q' }  \\
	&=C\prod_{j=1}^{m} \left \|  f_j   \right \|_{L^{p_j} ( \omega  _{j }   ) }.
	\nonumber
\end{align*}
Theorem \ref{Theorem 2.2} is thus proved.
\end{proof}
\section{The Boundedness of Multilinear Commutator on weighted Lebesgue Spaces}\label{sec3}

\quad\quad In Subsection 3.1, we give some definition and notations which will be used later. In Subsection 3.2, we establish the sharp maximal pointwise estimates. In Subsection 3.3, we prove the boundedness of multilinear commutator on weighted Lebesgue spaces.

\subsection{Definitions and Lemmas}
\begin{definition}
 The multilinear commutator $ g_{\lambda,\sum \vec{b}}^{*} $  generated by the new multilinear Littlewood--Paley operator and a family of locally integrable functions $\vec{b}=(b_1,\ldots,b_m)$ is defined by,
\begin{equation}
g_{\lambda ,\sum \vec{b}}^{*}(\vec{f})(x):=\sum_{j=1}^m g_{\lambda, b_j}^{*, j}(\vec{f})(x):=\sum_{j=1}^{m}g_{\lambda}^{*}(f_{1},\ldots,(b_{j}(x)-b_{j})f_{j},\ldots,f_{m})(x).\nonumber
\end{equation}
	
The multilinear iterative commutator $ g_{\lambda, \prod\vec{b}}^{*} $ generated by the new 
multilinear Littlewood--Paley operator is defined by,
\begin{equation}
g_{\lambda,\prod\vec{b}}^{*}(\vec f)(x):=g_{\lambda}^{*}((b_{1}(x)-b_{1})f_{1},\ldots,(b_{m}(x)-b_{m})f_{m})(x).
\nonumber
\end{equation}

\end{definition}

\begin{lemma}\label{Lemma3.1}{\rm(\cite{B2015})}  
	Let $\theta\geq0$ and $a\geq1$. If $b\in BMO_{\theta}(\varphi)$, then for cubes 
	$Q:=Q(x,r)$,
	\begin{itemize}
		\item [\rm (i)] $\left(\frac1{|Q|} \displaystyle \int_Q|b(y)-b_Q|^a dy 
		\right)^{\frac1a}\leq C\Vert b 
		\Vert_{BMO_{\theta}(\varphi)}{\varphi}(Q)^{\theta}$.
		\item [\rm (ii)] $\left(\frac1{|{t_0}^k Q|} \displaystyle 
		\int_{{t_0}^kQ}|b(y)-b_Q|^a 
		dy \right)^{\frac1a}\leq Ck\Vert b 
		\Vert_{BMO_{\theta}(\varphi)}{\varphi}({t_0}^kQ)^{\theta}$, for all $k\in 
		\mathbb{N}$ and $ t_0=2,3 $.
	\end{itemize}
	\end {lemma}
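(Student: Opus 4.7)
The plan is to import the two classical structural results about BMO---John-Nirenberg exponential decay and the logarithmic growth of cube averages---into the $BMO_{\theta}(\varphi)$ setting, being careful to track how the factor $\varphi(Q)^{\theta}$ enters at each step. The observation underpinning everything is the monotonicity $\varphi(Q')\leq \varphi(Q)$ whenever $Q'\subseteq Q$ (both are $1+\text{sidelength}$), which lets me treat $b$ restricted to subcubes of a fixed cube $Q$ as a \emph{classical} BMO function with norm at most $\|b\|_{BMO_{\theta}(\varphi)}\varphi(Q)^{\theta}$.

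For part (i), I would apply the definition of $BMO_{\theta}(\varphi)$ to every subcube $Q'\subseteq Q$ and use the above monotonicity to conclude that $b$ satisfies the classical $BMO$ inequality on $Q$ with constant $\|b\|_{BMO_{\theta}(\varphi)}\varphi(Q)^{\theta}$. The standard Calder\'on--Zygmund stopping-time proof of John--Nirenberg then produces the distributional estimate $|\{y\in Q:|b(y)-b_{Q}|>\lambda\}|\leq C_{1}|Q|\exp\bigl(-C_{2}\lambda/(\|b\|_{BMO_{\theta}(\varphi)}\varphi(Q)^{\theta})\bigr)$, and integrating $a\lambda^{a-1}$ against this distribution yields the $L^{a}$ bound in (i). For part (ii), I would split $|b(y)-b_{Q}|\leq |b(y)-b_{t_{0}^{k}Q}|+|b_{t_{0}^{k}Q}-b_{Q}|$ and bound the first piece by applying (i) at the dilate $t_{0}^{k}Q$. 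For the second piece, the telescoping identity $b_{t_{0}^{k}Q}-b_{Q}=\sum_{i=0}^{k-1}(b_{t_{0}^{i+1}Q}-b_{t_{0}^{i}Q})$ reduces matters to estimating a single step, and $|b_{t_{0}^{i+1}Q}-b_{t_{0}^{i}Q}|\leq t_{0}^{n}\,\|b\|_{BMO_{\theta}(\varphi)}\varphi(t_{0}^{i+1}Q)^{\theta}$ follows from the defining inequality on $t_{0}^{i+1}Q$ together with $|t_{0}^{i+1}Q|=t_{0}^{n}|t_{0}^{i}Q|$; since $\varphi(t_{0}^{i+1}Q)^{\theta}\leq \varphi(t_{0}^{k}Q)^{\theta}$, summing in $i$ produces the multiplicative $k$.

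The main point to double-check is the John--Nirenberg step: one must verify that the classical stopping-time/CZ argument inside $Q$ only ever selects subcubes of $Q$, so that the $\varphi(Q)^{\theta}$ factor can genuinely be pulled outside the exponential as a bona fide BMO norm on $Q$. This is the standard behavior of the proof, but it is the only place where the $\varphi$-monotonicity is essential, so it warrants explicit attention. Once this is verified, (i) is immediate and (ii) reduces to the routine bookkeeping of the telescoping sum described above.
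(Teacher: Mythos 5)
Your proof is correct; note that the paper itself does not prove this lemma but simply quotes it from \cite{B2015}, and your argument --- the monotonicity $\varphi(Q')\le\varphi(Q)$ for $Q'\subseteq Q$, which lets the localized John--Nirenberg/Calder\'on--Zygmund argument run inside $Q$ with constant $\Vert b\Vert_{BMO_{\theta}(\varphi)}\varphi(Q)^{\theta}$ to give (i), followed by the telescoping bound $|b_{t_0^{i+1}Q}-b_{t_0^{i}Q}|\le t_0^{n}\Vert b\Vert_{BMO_{\theta}(\varphi)}\varphi(t_0^{i+1}Q)^{\theta}\le t_0^{n}\Vert b\Vert_{BMO_{\theta}(\varphi)}\varphi(t_0^{k}Q)^{\theta}$ that produces the factor $k$ in (ii) --- is essentially the standard proof found in that reference. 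The only bookkeeping remark is that your final estimate is of the form $C(k+1)\Vert b\Vert_{BMO_{\theta}(\varphi)}\varphi(t_0^{k}Q)^{\theta}$, which is absorbed into $Ck$ for $k\ge 1$ (the case $k=0$ being just (i)).
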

	
	%\begin{lemma}\label{Lemma3.2}{\rm(\cite{1ZZ2021})}  
	%If $ f\in BMO_\theta(\varphi )$, $ 0< \theta < \infty $, $ w\in A_{\infty}^{\infty  } (\varphi ) $ and $ 1\le  p< \infty  $, then there exist constants $ C>0 $ and $ \tilde{s}>0 $ for every cube $ Q $ such that, 
	%\begin{align*}
	%\left ( \frac{1}{w(Q)}\int_{Q} \left | f(x)-f_Q \right |^pw(x) dx \right ) ^{\frac{1}{p} }\le C\varphi (Q)^{\frac{\tilde{s}}{p} }\left \| f \right 
	%\|_{BMO_{\theta}(\varphi ) }.
	%\end{align*}
	%\end {lemma}
			
		%\begin{lemma}\label{Lemma3.3}{\rm(\cite{B2015})}  
		%Let $ 0< \theta < \infty$ and $1\le p< \infty $. If $ w\in A_{p}^{\theta 
		%}(\varphi ) $, then there exists a positive constant $ C $ for any $\rho > 1 $ such that,
		%\begin{align*}
		%w(\rho Q) \leq C \varphi(\rho Q)^{p \theta} w(Q).
    	%\end{align*}
		 		%\end {lemma}
	
	\begin{lemma}\label{Lemma3.2}{\rm(\cite{B2015} )}  
		Let $1\leq p_1,\ldots,p_m<\infty$ and $\vec{\omega}=(\omega_1,\ldots,\omega_m) 
		\in A_{\vec{p}}^{\infty}(\varphi) $. Then
	\begin{itemize}
		\item [\rm (i)] $\vec{\omega}\in A_{r\vec{p}}^{\infty}(\varphi)$ for $r\geq 1$. \\
		\item [\rm (ii)] If $1<p_1,\ldots,p_m<\infty$, then there exists a  $r>1$ such that $\vec{\omega}\in A_{\vec{p}/r}^{\infty}(\varphi)$, where $ p_j/r>1 $, $ 
		j=1,\dots ,m $.
	\end{itemize}
		\end {lemma}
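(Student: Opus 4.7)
The plan is to exploit the equivalence in Lemma \ref{Lemma2.2}, which translates membership in $A_{\vec p}^{\infty}(\varphi)$ into a pair of single-weight conditions: $\omega_j^{1-p_j'}\in A_{mp_j'}^{\infty}(\varphi)$ for each $j$ and $v_{\vec\omega}\in A_{mp}^{\infty}(\varphi)$. Once we are on the single-weight side, the monotonicity, openness, and reverse-Hölder results of Lemma \ref{Lemma2.1} can be combined with elementary Hölder manipulations to move between exponents. A key convenient fact is that the product weight is unchanged under the scalings $\vec p\mapsto r\vec p$ and $\vec p\mapsto \vec p/r$, since $p/p_j=(rp)/(rp_j)=(p/r)/(p_j/r)$.

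For part (i), I would fix $\theta\ge 0$ with $\vec\omega\in A_{\vec p}^{\theta}(\varphi)$ and aim to verify the $A_{r\vec p}^{\theta}(\varphi)$ condition with the same $\theta$. Raising the $A_{\vec p}^{\theta}(\varphi)$ quantity to the power $1/r$ produces the correct outer exponent $1/(rp)$ on the $v_{\vec\omega}$ factor but leaves each tail factor in the form $\bigl(\frac{1}{\varphi(Q)^{\theta}|Q|}\int_Q\omega_j^{1-p_j'}\bigr)^{1/(rp_j')}$. The task reduces to showing
\begin{equation*}
\left(\tfrac{1}{\varphi(Q)^{\theta}|Q|}\int_Q \omega_j^{1-(rp_j)'}\right)^{1/(rp_j)'}\le \left(\tfrac{1}{\varphi(Q)^{\theta}|Q|}\int_Q \omega_j^{1-p_j'}\right)^{1/(rp_j')}.
\end{equation*}
Since $1-(rp_j)'=-1/(rp_j-1)$ and $1-p_j'=-1/(p_j-1)$, Hölder's inequality with exponent $(rp_j-1)/(p_j-1)\ge 1$ applied to $\int_Q\omega_j^{-1/(rp_j-1)}\cdot 1$ produces exactly this inequality (using also $\varphi(Q)\ge 1$ to absorb a nonpositive power of $\varphi(Q)^{\theta}$).

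For part (ii), by Lemma \ref{Lemma2.2} the hypothesis gives $\omega_j^{1-p_j'}\in A_{mp_j'}^{\infty}(\varphi)$ with $mp_j'>1$ and $v_{\vec\omega}\in A_{mp}^{\infty}(\varphi)$ with $mp>1$ (noting $p_j>1$ forces $mp>1$). Applying Lemma \ref{Lemma2.1}(ii) yields some $\varepsilon_0,\varepsilon_j>0$ with $v_{\vec\omega}\in A_{mp-\varepsilon_0}^{\infty}(\varphi)$ and $\omega_j^{1-p_j'}\in A_{mp_j'-\varepsilon_j}^{\infty}(\varphi)$. I would then choose $r>1$ sufficiently close to $1$ so that $mp/r\ge mp-\varepsilon_0$, which by Lemma \ref{Lemma2.1}(i) already gives $v_{\vec\omega}\in A_{mp/r}^{\infty}(\varphi)$. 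For the companion conditions $\omega_j^{1-(p_j/r)'}\in A_{m(p_j/r)'}^{\infty}(\varphi)$, I would use the identity $\omega_j^{1-(p_j/r)'}=\bigl(\omega_j^{1-p_j'}\bigr)^{s_j(r)}$ with $s_j(r)=r(p_j-1)/(p_j-r)\to 1$ as $r\to 1^+$, and combine it with Lemma \ref{Lemma2.1}(iii): the reverse-Hölder improvement for $\omega_j^{1-p_j'}$ shows that its $(1+\delta_j)$-th power still satisfies a good integral bound, and taking $r$ close enough to $1$ that $s_j(r)\le 1+\delta_j$ for every $j$ lets one conclude that $(\omega_j^{1-p_j'})^{s_j(r)}$ belongs to an $A_q^{\infty}(\varphi)$ class with $q$ arbitrarily close to $mp_j'$, in particular $q\le m(p_j/r)'$ for $r$ sufficiently near $1$. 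Lemma \ref{Lemma2.2} then assembles these into $\vec\omega\in A_{\vec p/r}^{\infty}(\varphi)$.

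The main technical obstacle is the combined bookkeeping in part (ii): as $r$ varies, both the power $s_j(r)$ on $\omega_j^{1-p_j'}$ and the target $A$-class exponent $m(p_j/r)'$ change simultaneously, so the reverse-Hölder gain $\delta_j$, the openness gain $\varepsilon_j$, and the constraint $r<\min_j p_j$ must all be balanced by a single choice of $r>1$. Once one makes this choice cleanly (shrinking $r-1$ enough to satisfy finitely many open inequalities at once), everything else reduces to routine Hölder and reverse-Hölder manipulations.
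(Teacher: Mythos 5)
The paper never proves this lemma: it is imported verbatim from Bui \cite{B2015}, so there is no in-paper argument to compare against. Your reconstruction from the quoted Lemmas \ref{Lemma2.1} and \ref{Lemma2.2} follows the standard route for such statements (the $\varphi$-weighted analogue of the Lerner--Ombrosi--P\'erez argument), and part (i) is correct as written for $p_j>1$: the H\"older step with exponent $(rp_j-1)/(p_j-1)$ and the absorption of the nonpositive power of $\varphi(Q)^{\theta}$ do give the claimed factor-by-factor comparison, and the invariance of $v_{\vec\omega}$ under $\vec p\mapsto r\vec p$ is exactly the right observation. You should, however, also treat the endpoint $p_j=1$ (allowed in the statement), where your H\"older exponent degenerates; there the term is interpreted as $(\inf_Q\omega_j)^{-1}$ and the bound follows trivially from $\omega_j^{-1/(rp_j-1)}\le(\inf_Q\omega_j)^{-1/(rp_j-1)}$ together with $\varphi(Q)\ge 1$.

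In part (ii) the architecture is right (Lemma \ref{Lemma2.2} in both directions, openness for $v_{\vec\omega}$, a power trick for $\omega_j^{1-p_j'}$), but the step you state as the mechanism is incomplete as formulated: the reverse-H\"older inequality of Lemma \ref{Lemma2.1}(iii) for $\omega_j^{1-p_j'}$, with $s_j(r)\le 1+\delta_j$, only controls the first factor $\int_Q(\omega_j^{1-p_j'})^{s_j(r)}$ of the target $A_q^{\infty}(\varphi)$ condition. The dual-side factor involves $\int_Q(\omega_j^{1-p_j'})^{-s_j(r)/(q-1)}$, and with $q=m(p_j/r)'$ one computes $s_j(r)/(q-1)=r(p_j-1)/((m-1)p_j+r)$, which for $m\ge 2$ strictly exceeds $1/(mp_j'-1)=(p_j-1)/((m-1)p_j+1)$ whenever $r>1$; so this exponent is beyond what the $A_{mp_j'}^{\infty}(\varphi)$ condition for $\omega_j^{1-p_j'}$ controls directly, and the reverse-H\"older gain on $\omega_j^{1-p_j'}$ itself does not help there. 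The repair is available from tools you already list: either use the openness gain $\omega_j^{1-p_j'}\in A_{mp_j'-\varepsilon_j}^{\infty}(\varphi)$ and a H\"older comparison as in part (i) once $r-1$ is small, or apply Lemma \ref{Lemma2.1}(iii) to the dual weight $(\omega_j^{1-p_j'})^{1-(mp_j')'}$, which lies in $A_{(mp_j')'}^{\infty}(\varphi)$ by the symmetry of the definition, absorbing the resulting $\varphi(Q)^l$ losses into a larger $\theta$. Stating explicitly which of these is used, and collecting the finitely many smallness requirements on $r-1$ (including $r<\min_j p_j$ and $mp/r\ge\max\{1,mp-\varepsilon_0\}$), would close the argument; with that, your proof is a sound self-contained substitute for the citation.
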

\subsection{Sharp Maximal Pointwise Estimates}
\quad\quad   In this subsection, the pointwise estimates for the sharp maximal function of the multilinear commutator is established. The following is the main result of this subsection.

\begin{theorem}\label {Theorem3.1} 
	Suppose that $g_{\lambda}^{*}$ is the new multilinear Littlewood--Paley function with generalized kernel as in Definition \ref{definition1.1} and $\vec{b}\in BMO_{\vec{\theta}}^{m}(\varphi)$, $\vec{\theta}=(\theta_{1},\ldots,\theta_{m})$ with $\theta_{j}\geq 0$, $j=1,\ldots,m$. Let $\sum_{k=1}^{\infty}kC_{k}<\infty$, $0<\delta<\varepsilon<1/m$, $q'<l<\infty$ and $\eta>(\max_{1\leq j \leq m}\theta_{j})/ (1/ \delta-1/\varepsilon)$. There exists a constant $C>0$ such that,
	\begin{align*} 
		M_{\delta,\varphi,\eta}^{\sharp,\triangle}(g_{\lambda,\sum \vec b}^{*}(\vec f))(x)\leq C \sum_{j=1}^{m}\| b_{j} \|_{BMO_{\theta_{j}}(\varphi)}
		\left(M_{\varepsilon,\varphi,\eta}^\triangle (g_{\lambda}^{*}(\vec f))(x)+\mathcal{M}_{l,\varphi,\eta}(\vec f)(x)\right)
	\end{align*}
	for all $\vec f=(f_1,\ldots,f_m)$ of bounded measurable functions with compact suppport.
	\begin{proof}We only prove the case $\theta_{1}=\cdots=\theta_{m}=\theta$ for simplicity. Fix a point $x\in \mathbb{R}^n$ and for any dyadic cube $Q:=Q(x_0,r)\ni x$, we consider two cases about the sidelength $r$ : $r<1$ and $r\geq1$.
		
		\noindent{\it \rm \textbf{Case 1}:} $r<1$. 
		Let $Q^*:=14n\sqrt{mn}Q$, we split each $f_j$ as 
		$f_j=f_j^0+f_j^\infty=f_j \chi_{Q^*}+f_j \chi_{(Q^*)^{c}}$. Then
		\begin {align*}
		\prod_{j=1}^mf_j(y_j)%&=\sum_{\alpha_1,\ldots,\alpha_m\in\{0,\infty\}}f_1^{\alpha_1}(y_1)\cdots f_m^{\alpha_m}(y_m)\\
		&=\prod_{j=1}^m f_j^{0}(y_j)+\sum_{(\alpha_1,\ldots,\alpha_m)\in 
			\mathscr{L}}f_1^{\alpha_1}(y_1)\cdots f_m^{\alpha_m}(y_m),
		\end {align*}
		where $\mathscr{L}:=\{(\alpha_1,\ldots,\alpha_m)$: there is at least one $\alpha_j=\infty\}$. 
		
		For $\mu\in Q$ and $\lambda_j:=(b_j)_{Q*}$, $j=1,\ldots,m$. Take $\mu_{0}\in 4Q\setminus 3Q$ and
		\begin{align*}
		&A_{j}:=\left(\iint_{\mathbb{R}_{+}^{n+1}}\left(\frac{t}{|z|+t}\right)^{n\lambda}\left|\int_{({\mathbb{R}^n})^m} \sum_{(\alpha_1,\ldots,\alpha_m)\in\mathscr{L}}K_{t}(\mu _{0}-z,\vec{y})(\lambda_j-b_{j}(y_{j}))\prod_{i=1}^{m}f_{i}^{\alpha_{i}}(y_{i})d\vec{y}\right| ^{2} \frac{dzdt}{t^{n+1}}\right)^{\frac{1}{2}}.
		\end {align*} 
		We then have
		\begin{align*}
		&\left|g_{\lambda,b_{j}}^{*,j}(\vec{f})(\mu)-A_{j}\right|\\
		&\leq \Bigg(\iint_{\mathbb{R}_{+}^{n+1}}\left(\frac{t}{|z|+t}\right)^{n\lambda}\Bigg|\int_{({\mathbb{R}^n})^m}\Big(K_{t}(\mu-z,\vec{y})(b_{j}(\mu)-b_{j}(y_{j}))\prod_{i=1}^{m}f_{i}(y_{i})\\
		&\quad -\sum_{(\alpha_1,\ldots,\alpha_m)\in\mathscr{L}}K_{t}(\mu _{0}-z,\vec{y})(\lambda_j-b_{j}(y_{j}))\prod_{i=1}^{m}f_{i}^{\alpha_{i}}(y_{i})\Big)d\vec{y}\Bigg|^{2}\frac{dzdt}{t^{n+1}}\Bigg)^{\frac{1}{2}}\\
		%&= \Bigg(\iint_{\mathbb{R}_{+}^{n+1}}\left(\frac{t}{|z|+t}\right)^{n\lambda}\Bigg|\int_{({\mathbb{R}^n})^2}K_{t}(\mu-z,\vec{y})(b_{1}(\mu)-\lambda_{1})\prod_{i=1}^{2}f_{i}(y_{i})+K_{t}(\mu-z,\vec{y})(\lambda_{j}-b_{j}(y_{j}))\prod_{i=1}^{m}f_{i}(y_{i})
		%&\quad -\sum_{(\alpha_1,\alpha_2)\in\mathscr{L}}K_{t}(\mu _{0}-z,\vec{y})(\lambda_1-b_{1})\prod_{i=1}^{2}f_{i}^{\alpha_{i}}(y_{i})dy_{1}dy_{2}\Bigg|^{2}\frac{dzdt}{t^{n+1}}\Bigg)^{\frac{1}{2}}
		%&= \Bigg(\iint_{\mathbb{R}_{+}^{n+1}}\left(\frac{t}{|z|+t}\right)^{n\lambda}\Bigg|\int_{({\mathbb{R}^n})^m}\bigg(K_{t}(\mu-z,\vec{y})(b_{j}(\mu)-\lambda_{j})\prod_{i=1}^{m}f_{i}(y_{i})+K_{t}(\mu-z,\vec{y})(\lambda_{j}-b_{j})\prod_{i=1}^{m}f_{i}^{0}(y_{i})\\
		%&\quad +\sum_{(\alpha_1,\ldots,\alpha_m)\in\mathscr{L}}\Big(K_{t}(\mu -z,\vec{y})(\lambda_j-b_{j})\prod_{i=1}^{m}f_{i}^{\alpha_{i}}(y_{i})-K_{t}(\mu _{0}-z,\vec{y})(\lambda_j-b_{j})\prod_{i=1}^{m}f_{i}^{\alpha_{i}}(y_{i})\Big)\bigg)d\vec{y}\Bigg|^{2}\frac{dzdt}{t^{n+1}}\Bigg)^{\frac{1}{2}}\\	
		&\leq \Bigg(\iint_{\mathbb{R}_{+}^{n+1}}\left(\frac{t}{|z|+t}\right)^{n\lambda}\Bigg|\int_{({\mathbb{R}^n})^m}K_{t}(\mu-z,\vec{y})(b_{j}(\mu)-\lambda_{j})\prod_{i=1}^{m}f_{i}(y_{i})d\vec{y}\Bigg|^{2}\frac{dzdt}{t^{n+1}}\Bigg)^{\frac{1}{2}}\\
		&\quad +\Bigg(\iint_{\mathbb{R}_{+}^{n+1}}\left(\frac{t}{|z|+t}\right)^{n\lambda}\Bigg|\int_{({\mathbb{R}^n})^m}K_{t}(\mu-z,\vec{y})(\lambda_{j}-b_{j}(y_{j}))\prod_{i=1}^{m}f_{i}^{0}(y_{i})d\vec{y}\Bigg|^{2}\frac{dzdt}{t^{n+1}}\Bigg)^{\frac{1}{2}}\\
		&\quad +\sum_{(\alpha_1,\ldots,\alpha_m)\in\mathscr{L}}\Bigg(\iint_{\mathbb{R}_{+}^{n+1}}\left(\frac{t}{|z|+t}\right)^{n\lambda}\Bigg|\int_{({\mathbb{R}^n})^m}\Big(K_{t}(\mu-z,\vec{y})-K_{t}(\mu_{0}-z,\vec{y})\Big)\Big(\lambda_{j}-b_{j}(y_{j})\Big)\\
		&\quad\times\prod_{i=1}^{m}f_{i}^{\alpha_{i}}(y_{i})d\vec{y}\Bigg|^{2}\frac{dzdt}{t^{n+1}}\Bigg)^{\frac{1}{2}}\\
		&=\left|b_{j}(\mu)-\lambda_{j}\right|g_{\lambda}^{*}(\vec{f})(\mu)+g_{\lambda}^{*}\left(f_{1}^{0},\ldots,f_{j-1}^{0},(\lambda_{j}-b_{j})f_{j}^{0},f_{j+1}^{0},\ldots,f_{m}^{0} \right)(\mu)\\
		&\quad +\sum_{(\alpha_1,\ldots,\alpha_m)\in\mathscr{L}}\Bigg(\iint_{\mathbb{R}_{+}^{n+1}}\left(\frac{t}{|z|+t}\right)^{n\lambda}\Bigg|\int_{({\mathbb{R}^n})^m}\Big(K_{t}(\mu-z,\vec{y})-K_{t}(\mu_{0}-z,\vec{y})\Big)\Big(\lambda_{j}-b_{j}(y_{j})\Big)\\
		&\quad\times\prod_{i=1}^{m}f_{i}^{\alpha_{i}}(y_{i})d\vec{y}\Bigg|^{2}\frac{dzdt}{t^{n+1}}\Bigg)^{\frac{1}{2}}.
		\end {align*} 
		So
		\begin{align*}
		%&\left(\frac1{|Q|}\int_Q \bigg||g_{\lambda, b_{j}}^{*,j}(\vec{f})(\mu)|^\delta-|A_{j}|^\delta \bigg| d\mu\right)^{\frac1\delta}\\
		&\left(\frac1{|Q|}\int_Q \bigg|g_{\lambda, b_{j}}^{*,j}(\vec{f})(\mu)-A_{j} \bigg|^\delta d\mu\right)^{\frac1\delta}\\
	&\leq C\left(\frac1{|Q|}\int_Q 
		|\left(b_{j}(\mu)-\lambda_{j}\right)g_{\lambda}^{*}(\vec{f})(\mu)|^\delta
		d\mu\right)^{\frac1\delta}\\
	&  \quad +C\left(\frac1{|Q|}\int_Q 
	|g_{\lambda}^{*}\left(f_{1}^{0},\ldots,f_{j-1}^{0},(\lambda_{j}-b_{j})f_{j}^{0},f_{j+1}^{0},\ldots,f_{m}^{0} \right)(\mu)|^\delta d\mu\right)^{\frac1\delta}\\
	&  \quad + C\sum_{(\alpha_1,\ldots,\alpha_m)\in\mathscr{L}}\Bigg(\frac1{|Q|}\int_Q \bigg(\iint_{\mathbb{R}_{+}^{n+1}}\left(\frac{t}{|z|+t}\right)^{n\lambda}\bigg|\int_{({\mathbb{R}^n})^m}\big(K_{t}(\mu-z,\vec{y})-K_{t}(\mu_{0}-z,\vec{y})\big)\\
	&\quad\times\Big(\lambda_{j}-b_{j}(y_{j})\Big)\prod_{i=1}^{m}f_{i}^{\alpha_{i}}(y_{i})d\vec{y}\bigg|^{2}\frac{dzdt}{t^{n+1}}\bigg)^{\frac{\delta}{2}}d\mu\Bigg)^{\frac{1}{\delta}}\\
	&:=H_{1}+H_{2}+H_{3}\\
	&:=H_{1}+H_{2}+\sum_{(\alpha_1,\ldots,\alpha_m)}H_{3 \alpha_1,\ldots,\alpha_m}.
	\end {align*}
					
	Choosing $1<q_{2}<\min\left\{\varepsilon /\delta, 1 / (1-\delta)\right\}$, then select $q_{1}$ such that  $1/q_{1}+1/q_{2}=1$, so we have $1<q_{1},q_{2}<\infty$ and $\delta q_{1}>1$. By H\"{o}lder's inequality and Lemma \ref{Lemma3.1}, we obtain that
	\begin{align*}
	H_{1}&\leq C\left( \frac1{|Q|}\int_Q 
	|b_j(\mu)-\lambda_j|^{\delta q_1}d\mu\right)^{\frac1{\delta q_1}}
	\left( \frac1{|Q|}\int_Q |g_{\lambda}^{*}(\vec{f})(\mu)|^{\delta q_2}d\mu\right)^{\frac1{\delta q_2}}\\
	%&\leq C\|b_j\|_{BMO_\theta(\varphi)} \left( 
	%\frac1{|Q|}\int_Q |g_{\lambda}^{*}(\vec{f})(\mu)|^\varepsilon %d\mu\right)^{\frac{1}{\varepsilon} }\\
	&\leq C\|b_j\|_{BMO_\theta(\varphi)} \left( 
	\frac1{\varphi(Q)^\eta|Q|}\int_Q |g_{\lambda}^{*}(\vec{f})(\mu)|^\varepsilon 
	d\mu\right)^{\frac{1}{\varepsilon} }\\
	&\leq C\|b_j\|_{BMO_\theta(\varphi)} M_{\varepsilon,\varphi,\eta}^\triangle 
	(g_{\lambda}^{*}(\vec{f}))(x).
	\end {align*}
						
     Let $1/h_j+1/h'_j=1$ and $h_j=l/s_j$. For $s_j \leq q'<l$, $j=1,\ldots,m$, so $h_j>1$. Using Lemma \ref{Lemma2.7}, $g_{\lambda}^{*}:L^{s_1}\times\cdots\times L^{s_m}\to L^{s,\infty}$, H\"{o}lder's inqualitiy and Lemma \ref{Lemma3.1}, we obtain
		\begin{align*}
		H_{2} &\leq C|Q|^{-\frac{1}{s}}\| 
		g_{\lambda}^{*}\left(f_{1}^{0},\ldots,f_{j-1}^{0},(\lambda_{j}-b_{j})f_{j}^{0},f_{j+1}^{0},\ldots,f_{m}^{0}\right) \|_{L^{s,\infty}}\\
		&\leq C\prod_{i=1,i\neq j}^{m} 
		\left(\frac1{|Q^{*}|}\int_{Q^{*}} |f_i(y_i)|^{s_i}dy_i \right)^{\frac1{s_i}} \left(\frac1{|Q^{*}|}\int_{Q^{*}} |(\lambda_j-b_j(y_{j}))f_j(y_j)|^{s_j}dy_j \right)^{\frac1{s_j}}  \\
		&\leq C  \prod_{i=1}^{m} \left(\frac1{|Q^{*}|}\int_{Q^{*}} |f_i(y_i)|^{l}dy_i \right)^{\frac1{l}} \left(\frac1{|Q^{*}|}\int_{Q^{*}} |\lambda_j-b_j(y_{j})|^{s_jh_{j}'}dy_j \right)^{\frac1{s_jh_{j}'}}\\
		&\leq C \| b_j\|_{BMO_\theta(\varphi)}\prod_{i=1}^{m}\left(\frac1{\varphi(Q^{*})^\eta|Q^{*}|}\int_{Q^{*}} |f_i(y_i)|^{l}dy_i \right)^{\frac1l}   \\
		&\leq C \| b_j\|_{BMO_\theta(\varphi)}\mathcal{M}_{l,\varphi,\eta}(\vec f)(x).
		\end {align*}
							
		Let $\Delta_k:=Q(\mu_{0}, 2^{k}\sqrt{mn}|\mu-\mu_{0}|)$, $ \mu\in Q $, $k\in \mathbb{N}_+$, $h=l/q'$ and $1/h+1/h'=1$. For $1< q'<l$, then it can be inferred that $h>1$. Since $\mu\in Q$, $\mu_{0}\in 4Q\setminus 3Q$ and $\Delta_2 
		\subset Q^*$, we conclude $(\mathbb{R}^n)^m\setminus (Q^*)^m\subset 
		(\mathbb{R}^n)^m\setminus (\Delta_2)^m$, $|\mu-\mu_{0}|\sim r$ and $\Delta_{k+2} \subset 2^kQ^*$. Taking $N>m\eta/l+\theta$, it follows from
		H\"{o}lder's inquality, Minkowski's inequality, the smoothness condition \eqref{1.3} and Lemma \ref{Lemma3.1} that
		\begin{align*}
		H_{3 \alpha_1, \ldots, \alpha_m}
		&\leq\frac{1}{|Q|}\int_{ Q}\Bigg(\iint_{\mathbb{R}_{+}^{n+1}}\left(\frac{t}{|z|+t}\right)^{n\lambda}\Big(\int_{{({\mathbb{R}^n})^m}\backslash\left(Q^{*}\right)^{m}}\big|K_{t}(\mu-z,\vec{y})-K_{t}(\mu_{0}-z,\vec{y})\big|\\
		& \quad \times |\lambda_{j}-b_{j}(y_{j})|\prod_{i=1}^{m}|f_{i}(y_{i})| d\vec{y} \Big)^{2}  \frac{dzdt}{t^{n+1}}\Bigg)^{\frac{1}{2}}d\mu\\
		&\leq\frac{1}{|Q|}\int_{ Q}\Bigg(\iint_{\mathbb{R}_{+}^{n+1}}\left(\frac{t}{|z|+t}\right)^{n\lambda}\Big(\int_{{({\mathbb{R}^n})^m}\backslash\left(\Delta_{2}\right)^{m}}\big|K_{t}(\mu-z,\vec{y})-K_{t}(\mu_{0}-z,\vec{y})\big|\\
		& \quad \times |\lambda_{j}-b_{j}(y_{j})|\prod_{i=1}^{m}|f_{i}(y_{i})| d\vec{y} \Big)^{2}  \frac{dzdt}{t^{n+1}}\Bigg)^{\frac{1}{2}}d\mu\\
		%& \leq\frac{1}{|Q|}\int_{ Q}\sum_{k=1}^{\infty}\Bigg( \iint_{\mathbb{R}_{+}^{n+1}}\left(\frac{t}{|z|+t}\right)^{n\lambda}\Bigg(\int_{\left(\Delta_{k+2}\right)^{m}\backslash\left(\Delta_{k+1}\right)^{m}}\big|K_{t}(\mu-z,\vec{y})-K_{t}(\mu_{0}-z,\vec{y})\big|\\
		%& \quad 	\times |\lambda_{j}-b_{j}|\prod_{i=1}^{m}|f_{i}(y_{i})d\vec{y}\Bigg)^{2}\frac{dzdt}{t^{n+1}}    \Bigg)^{\frac{1}{2}}d\mu\\     
		& \leq\frac{1}{|Q|}\int_{ Q}\sum_{k=1}^{\infty}\Bigg( \iint_{\mathbb{R}_{+}^{n+1}}\left(\frac{t}{|z|+t}\right)^{n\lambda}\bigg(\int_{\left(\Delta_{k+2}\right)^{m}\backslash\left(\Delta_{k+1}\right)^{m}}\big|K_{t}(\mu-z,\vec{y})\\
		& \quad 	-K_{t}(\mu_{0}-z,\vec{y})\big|^{q}d\vec{y}\bigg)^{\frac{2}{q}}     \bigg(\int_{\left(\Delta_{k+2}\right)^{m}}\big(|\lambda_{j}-b_{j}(y_{j})|\prod_{i=1}^{m}|f_{i}(y_{i})|\big)^{q'}d\vec{y}\bigg)^{\frac{2}{q'}}\frac{dzdt}{t^{n+1}}    \Bigg)^{\frac{1}{2}}d\mu\\
		%& \leq\frac{C}{|Q|}\int_{ Q}\sum_{k=1}^{\infty}C_{k}2^{-\frac{kmn}{q'}}|\mu-\mu_{0}|^{-\frac{mn}{q'}}\left(1+2^{k}|\mu-\mu_{0}|\right)^{-N}\\
		%& \quad \times \Big(\int_{(2^kQ^*)^{m}}\big(|\lambda_{j}-b_{j}(y_{j})|\prod_{i=1}^{m}|f_{i}(y_{i})|\big)^{q'}d\vec{y}\Big)^{\frac{2}{q'}}\frac{dzdt}{t^{n+1}}    \Bigg)^{\frac{1}{2}}d\mu\\
		& \leq\frac{C}{|Q|}\int_{ Q}\sum_{k=1}^{\infty}C_{k}2^{-\frac{kmn}{q'}}|\mu-\mu_{0}|^{-\frac{mn}{q'}}\left(1+2^{k}|\mu-\mu_{0}|\right)^{-N}\\
		& \quad \times \prod_{i=1,i\neq j}^{m}\left(\int_{2^kQ^*}|f_{i}(y_{i})|^{q'}dy_{i}\right)^{\frac{1}{q'}}\left(\int_{2^kQ^*}\left(|\lambda_{j}-b_{j}(y_{j})||f_{j}(y_{j})|\right)^{q'}dy_{j}\right)^{\frac{1}{q'}}d\mu\\
		&\leq \frac{C}{|Q|}\int_Q \sum_{k=1}^{\infty} \frac{C_k |2^kQ^*|^{\frac{m}{q'}}}{(2^kr)^{\frac{mn}{q'}}(1+2^kr)^{N}}\prod_{i=1,i\neq j}^{m}\left(\frac{1}{|2^kQ^*|}\int_{2^kQ^*}|f_{i}(y_{i})|^{q'}dy_{i}\right)^{\frac{1}{q'}}\\
		& \quad \times \left(\frac{1}{|2^kQ^*|}\int_{2^kQ^*}\left(|\lambda_{j}-b_{j}(y_{j})||f_{j}(y_{j})|\right)^{q'}dy_{j}\right)^{\frac{1}{q'}}d\mu\\
		&\leq \frac{C}{|Q|}\int_Q \sum_{k=1}^{\infty} \frac{C_k |2^kQ^*|^{\frac{m}{q'}} \varphi(2^kQ^*)^{\frac{m\eta}{l}}}{(2^kr)^{\frac{mn}{q'}}(1+2^kr)^{N}}
		\prod_{i=1}^{m}\left(\frac1{\varphi(2^kQ^*)^{\eta}|2^kQ^*|}	\int_{2^kQ^*}|f_i(y_i)|^{l} dy_i\right)^{\frac1{l}}\\
		& \quad \times \left( \frac1{|2^kQ^*|} \int_{2^kQ^*}|\lambda_j-b_j(y_{j})|^{q'h'} 
		dy_2\right)^{\frac1{q'h'}}d\mu \\
		&\leq \frac{C}{|Q|}\int_Q\sum_{k=1}^{\infty} \frac{C_k 
		|2^kQ^*|^{\frac{m}{q'}} \varphi(2^kQ^*)^{\frac{m\eta}{l}}}{(2^kr)^{\frac{mn}{q'}}(1+2^kr)^{N}} \mathcal{M}_{l,\varphi,\eta}(\vec f)(x) k\| b_j \|_{BMO_\theta (\varphi)} \varphi(2^kQ^*)^\theta  d\mu\\
		&\leq C\| b_j \|_{BMO_\theta (\varphi)} \mathcal{M}_{l,\varphi,\eta}(\vec f)(x).
		\end {align*}
		Thus, $ H_{3 } \leq C \| b_j \|_{BMO_\theta (\varphi)} \mathcal{M}_{l,\varphi,\eta}(\vec f)(x)$.
		
		\noindent{\it \rm \textbf{Case 2}:} $r\geq1$. 
		Let ${\widetilde Q}^*=8Q$. Splitting each $f_j$ as $f_j=f_j^0+f_j^\infty=f_j \chi_{{\widetilde Q}^*}+f_j \chi_{({\widetilde Q}^*)^{c}}$. Then
		\begin {align*}
		\prod_{j=1}^mf_j(y_j)
		&=\prod_{j=1}^m f_j^{0}(y_j)+\sum_{(\alpha_1,\ldots,\alpha_m)\in 
		\mathscr{L}}f_1^{\alpha_1}(y_1),\ldots,f_m^{\alpha_m}(y_m),
		\end {align*}
		where  $\mathscr{L}:=\{(\alpha_1,\ldots,\alpha_m)$: there is at least one $\alpha_j=\infty\}$. 
								
		For $\mu\in Q$ and $\widetilde\lambda_j:=(b_j)_{{\widetilde Q}^*}$, $j=1,2,\ldots,m$. We conclude that
		\begin{align*}
		g_{\lambda,b_{j}}^{*,j}(\vec{f})(\mu)
		%&=\left(\iint_{\mathbb{R}_{+}^{n+1}}\left(\frac{t}{|z|+t}\right)^{n\lambda}\left|\int_{({\mathbb{R}^n})^2}K_{t}(\mu-z,\vec{y})(b_{1}(\mu)-b_{1})\prod_{i=1}^{2}f_{i}(y_{i})dy_{1}dy_{2}\right|^{2}\frac{dzdt}{t^{n+1}}\right)^{\frac{1}{2}}\\
		&=\Bigg(\iint_{\mathbb{R}_{+}^{n+1}}\left(\frac{t}{|z|+t}\right)^{n\lambda}\bigg|\int_{({\mathbb{R}^n})^m}K_{t}(\mu-z,\vec{y})(b_{j}(\mu)-\widetilde\lambda_{j})\prod_{i=1}^{m}f_{i}(y_{i})\\
		&\quad +K_{t}(\mu-z,\vec{y})(\widetilde\lambda_{j}-b_{j}(y_{j}))\prod_{i=1}^{m}f_{i}(y_{i})d\vec{y}\bigg|^{2}\frac{dzdt}{t^{n+1}}\Bigg)^{\frac{1}{2}}\\
		&\leq \Bigg(\iint_{\mathbb{R}_{+}^{n+1}}\left(\frac{t}{|z|+t}\right)^{n\lambda}\bigg|\int_{({\mathbb{R}^n})^m}K_{t}(\mu-z,\vec{y})(b_{j}(\mu)-\widetilde\lambda_{j})\prod_{i=1}^{m}f_{i}(y_{i})d\vec{y}\bigg|^{2}\frac{dzdt}{t^{n+1}}\Bigg)^{\frac{1}{2}}\\
		&\quad +\Bigg(\iint_{\mathbb{R}_{+}^{n+1}}\left(\frac{t}{|z|+t}\right)^{n\lambda}\bigg|\int_{({\mathbb{R}^n})^m}K_{t}(\mu-z,\vec{y})(\widetilde\lambda_{j}-b_{j}(y_{j}))\prod_{i=1}^{m}f_{i}^{0}(y_{i})d\vec{y}\bigg|^{2}\frac{dzdt}{t^{n+1}}\Bigg)^{\frac{1}{2}}\\
		&\quad +\sum_{(\alpha_1,\ldots,\alpha_m)}\Bigg(\iint_{\mathbb{R}_{+}^{n+1}}\left(\frac{t}{|z|+t}\right)^{n\lambda}\bigg|\int_{({\mathbb{R}^n})^m}K_{t}(\mu-z,\vec{y})(\widetilde\lambda_{j}-b_{j}(y_{j}))\\
		&\quad \times \prod_{i=1}^{m}f_{i}^{\alpha_{i}}(y_{i})d\vec{y}\bigg|^{2}\frac{dzdt}{t^{n+1}}\Bigg)^{\frac{1}{2}}
		\end{align*}
		So
		\begin{align*}
		&\left(\frac1{\varphi(Q)^{\eta}|Q|}\int_Q \big|g_{\lambda, b_{j}}^{*, j}(\vec{f})(\mu)\big|^\delta d\mu\right)^{\frac1\delta}\\
		&\leq \frac C{\varphi(Q)^{\eta/\delta}}\left(\frac1{|Q|}\int_Q 
		\big|(b_j(\mu)-\widetilde\lambda_j)g_{\lambda}^{*}(\vec{f})(\mu)\big|^\delta 
		d\mu\right)^{\frac1\delta}\\
		& \quad +\frac C{\varphi(Q)^{\eta/\delta}}\left(\frac1{|Q|}\int_Q 
		\big|g_{\lambda}^{*}\left(f_{1}^{0},\ldots,f_{j-1}^{0},(\widetilde\lambda_{j}-b_{j})f_{j}^{0},f_{j+1}^{0},\ldots,f_{m}^{0}\right)(\mu)\big|^\delta d\mu\right)^{\frac1\delta}\\
		& \quad +C\sum_{(\alpha_1,\ldots,\alpha_m)\in\mathscr{L}}\frac1{\varphi(Q)^{\eta/\delta}}\left(\frac1{|Q|}\int_Q \big|g_{\lambda}^{*}\left(f_{1}^{\alpha_{1}},\ldots,f_{j-1}^{\alpha_{j-1}},(\widetilde\lambda_{j}-b_{j})f_{j}^{\alpha_j},f_{j+1}^{\alpha_{j+1}},\ldots,f_{m}^{\alpha_m}\right)(\mu)\big|^\delta d\mu\right)^{\frac1\delta}\\
		&:=\widetilde H_{1}+\widetilde H_{2}+\widetilde H_{3}\\
		&:=\widetilde H_{1}+\widetilde H_{2}+\sum_{(\alpha_1,\ldots,\alpha_m)\in\mathscr{L}}\widetilde H_{3 \alpha_1,\ldots,\alpha_m}.
		\end {align*}
									
		Since $\eta>\theta/ (1/ \delta-1/\varepsilon)$ and by H\"{o}lder's inequality,  similar to the estimate of $H_{1}$, we obtain that
		\begin{align*}
		\widetilde H_{1}\leq C\|b_j\|_{BMO_\theta(\varphi)} 
		M_{\varepsilon,\varphi,\eta}^\triangle (g_{\lambda}^{*}(\vec{f}))(x).
		\end {align*}
										
	Now we estimate $\widetilde H_{2}$. Similar to the estimate of $H_{2}$, since $\eta>\theta/ (1/ \delta-1/\varepsilon)>\theta/(1/\delta -m/l) $, we have
	\begin{align*}
		\widetilde H_{2}
		&\leq C \| b_j\|_{BMO_\theta(\varphi)}\mathcal{M}_{l,\varphi,\eta}(\vec f)(x).
		\end {align*}
											
	It is easy to get that $ {\textstyle \sum_{j=1}^{m}} |\mu-y_j|\sim 2^{k}r$ for $\mu\in Q$ and $(y_1,y_2,\ldots,y_{m})\in (2^{k+3}Q)^m \setminus (2^{k+2}Q)^m$. Take $N >m\eta/l+\theta$, by H\"{o}lder's inqualitiy, Minkowski's inequality and the size condition \eqref{1.1}, so
	\begin {align*}	
	\widetilde H_{3 \alpha_1,\ldots,\alpha_m}
	%&\leq \frac C{\varphi(Q)^{\eta/\delta}{|Q|}}\int_Q| g_{\lambda}^{*}\left(\left(\widetilde\lambda_{1}-b_{1}\right)f_{1}^{\alpha_1}, f_{2}^{\alpha_2}\right)(\mu)|d\mu\\
	&\leq\frac C{\varphi(Q)^{\eta/\delta}{|Q|}}\int_{ Q}\Bigg(\iint_{\mathbb{R}_{+}^{n+1}}\left(\frac{t}{|\mu-z|+t}\right)^{n\lambda}\Big(\int_{{({\mathbb{R}^n})^m}\backslash\left(\widetilde Q^{*}\right)^{m}}\big|K_{t}(z,\vec{y})\big|\big|\widetilde\lambda_{j}-b_{j}(y_{j})\big|\\
	&\quad \times\prod_{i=1}^{m}\big|f_{i}(y_{i})\big|d\vec{y} \Big)^{2} \frac{dzdt}{t^{n+1}}\Bigg)^{\frac{1}{2}}d\mu\\
	&\leq\frac C{\varphi(Q)^{\eta/\delta}{|Q|}}\int_{ Q}\int_{{({\mathbb{R}^n})^m}\backslash\left(\widetilde Q^{*}\right)^{m}}\Bigg(\iint_{\mathbb{R}_{+}^{n+1}}\left(\frac{t}{|\mu-z|+t}\right)^{n\lambda}\big|K_{t}(z,\vec{y})\big|^{2}\frac{dzdt}{t^{n+1}}\Bigg)^{\frac{1}{2}}\\
	&\quad \times\big|\widetilde\lambda_{j}-b_{j}(y_{j})\big|\prod_{i=1}^{m}\big|f_{i}(y_{i})\big|d\vec{y} d\mu\\  
	&\leq \frac C{\varphi(Q)^{\eta/\delta}{|Q|}}\int_{ Q}\sum_{k=1}^{\infty}\int_{\left(2^{k+3}Q\right)^{m}\backslash\left(2^{k+2}Q\right)^{m}}\frac{\big|\widetilde\lambda_{j}-b_{j}(y_{j})\big|\prod_{i=1}^{m}\big|f_{i}(y_{i})\big|}{\left(\sum_{j=1}^{m}|\mu-y_{j}|\right)^{mn}(1+\sum_{j=1}^{m}|\mu-y_{j}|)^{N}}d\vec{y}d\mu\\
	%&\leq \frac C{\varphi(Q)^{\eta/\delta}{|Q|}}\int_{ Q}\sum_{k=1}^{\infty}\left(2^{k-1}|\mu-\mu_0|\right)^{-mn}\left(1+2^{k-1}|\mu-\mu_0|\right)^{-N}\\
	%&\quad \times \prod_{i=1,i\neq j}^{m}\left(\int_{2^{k+3}Q}\left|f_{i}(y_{i})\right|dy_{i}\right)\left(\int_{2^{k+3}Q}\big|\widetilde\lambda_{j}-b_{j}\big|\big|f_{j}(y_{j})\big|dy_{j}\right)d\mu\\
	%&\leq \frac C{\varphi(Q)^{\eta/\delta}{|Q|}}\int_{ Q}\sum_{k=1}^{\infty}\frac{|2^{k+3}Q|^{m}}{(2^{k}r)^{mn}(1+2^{k}r)^{N}}\prod_{i=1,i\neq j}^{m}	\left(\frac{1}{|2^{k+3}Q|}\int_{2^{k+3}Q }\big|f_{i}(y_{i})\big|dy_{i}\right)\\ 
	%&\quad \times \left(\frac{1}{|2^{k+3}Q|}\int_{2^{k+3}Q}\big|\widetilde\lambda_{j}-b_{j}\big|\big|f_{j}(y_{j})\big|dy_{j}\right)d\mu\\
	&\leq \frac C{\varphi(Q)^{\eta/\delta}{|Q|}}\int_{ Q}\sum_{k=1}^{\infty}\frac{|2^{k+3}Q|^{m}}{(2^{k}r)^{mn}(1+2^{k}r)^{N}}\prod_{i=1}^{m}\left(\frac{1}{|2^{k+3}Q|}\int_{2^{k+3}Q }|f_{i}(y_{i})|^{l}dy_{i}\right)^{\frac{1}{l}}\\	
	&\quad \times\left(\frac{1}{|2^{k+3}Q|}\int_{2^{k+3}Q}\big|\widetilde\lambda_{j}-b_{j}(y_{j})\big|^{l'}dy_{j}\right)^{\frac{1}{l'}}d\mu\\ 
	&\leq \frac C{\varphi(Q)^{\eta/\delta}{|Q|}}\sum_{k=1}^{\infty}\frac{|2^{k+3}Q|^{m}\varphi(2^{k+3}Q)^{\frac{m\eta}{l}}}{(2^{k}r)^{mn}(1+2^{k}r)^{N}}\prod_{i=1}^{m}\left(\frac{1}{\varphi(2^{k+3}Q)^{\eta}|2^{k+3}Q|}\int_{2^{k+3}Q }|f_{i}(y_{i})|^{l}dy_{i}\right)^{\frac{1}{l}}\\	
	&\quad \times\left(\frac{1}{|2^{k+3}Q|}\int_{2^{k+3}Q}\big|\widetilde\lambda_{j}-b_{j}(y_{j})\big|^{l'}dy_{j}\right)^{\frac{1}{l'}}\\ 
	&\leq \frac C{\varphi(Q)^{\eta/\delta}{|Q|}}\sum_{k=1}^{\infty}\frac{|2^{k+3}Q|^{m}\varphi(2^{k+3}Q)^{\frac{m\eta}{l}}}{(2^{k}r)^{mn}(1+2^{k}r)^{N}}\mathcal{M}_{l,\varphi,\eta}(\vec f)(x)k\| b_j \|_{BMO_\theta (\varphi)}\varphi(2^{k+3}Q)^{\theta}\\
	&\leq C\sum_{k=1}^{\infty}k\left(1+2^{k}r\right)^{\frac{m\eta}{l}-N+\theta}\| b_j \|_{BMO_\theta (\varphi)}\mathcal{M}_{l,\varphi,\eta}(\vec f)(x)\\
	&\leq C\sum_{k=1}^{\infty}k\left(2^{k}\right)^{\frac{m\eta}{l}-N+\theta}\| b_j \|_{BMO_\theta (\varphi)}\mathcal{M}_{l,\varphi,\eta}(\vec f)(x)\\
	&\leq C\| b_j \|_{BMO_\theta (\varphi)}\mathcal{M}_{l,\varphi,\eta}(\vec f)(x).
	\end {align*}
	
	In short, if $r<1$, we obtain
	\begin{align*}
	&\left(\frac1{|Q|}\int_Q \left|g_{\lambda, b_{j}}^{*,j}(\vec{f})(\mu)-A_{j}\right|^\delta  d\mu\right)^{\frac1\delta} \leq C \| b_{j} \|_{BMO_{\theta}(\varphi)}\left(M_{\varepsilon,\varphi,\eta}^\triangle \big(g_{\lambda}^{*}(\vec f)\big)(x)+\mathcal{M}_{l,\varphi,\eta}(\vec f)(x)\right),
	\end{align*}
	then
	\begin{align*}
	&\inf_{c}\left(\frac1{|Q|}\int_Q \Big|\big|g_{\lambda, \sum 
	\vec{b}}^{*}(\vec{f})(\mu)\big|^\delta-c \Big| d\mu\right)^{\frac1\delta}
	\leq \left(\frac1{|Q|}\int_Q \Big|\big|g_{\lambda, \sum \vec{b}}^{*}(\vec{f})(\mu)\big|^\delta-\big|\sum_{j=1}^{m}A_{j}\big|^{\delta} \Big| d\mu\right)^{\frac1\delta}\\
	&\leq C\left(\frac1{|Q|}\int_Q \bigg|\sum_{j=1}^{m}g_{\lambda, b_{j}}^{*,j}(\vec{f})(\mu)-\sum_{j=1}^{m}A_{j}\bigg|^\delta  d\mu\right)^{\frac1\delta}
	\leq C\sum_{j=1}^{m}\left(\frac1{|Q|}\int_Q \left|g_{\lambda, b_{j}}^{*,j}(\vec{f})(\mu)-A_{j}\right|^\delta  d\mu\right)^{\frac1\delta}\\
	&\leq  C \sum_{j=1}^{m}\| b_{j} \|_{BMO_{\theta}(\varphi)}	\left(M_{\varepsilon,\varphi,\eta}^\triangle \big(g_{\lambda}^{*}(\vec f)\big)(x)+\mathcal{M}_{l,\varphi,\eta}(\vec f)(x)\right).
	\end{align*}

	And if $r \geq 1$, we have
	\begin{align*}
	&\left(\frac1{\varphi(Q)^{\eta}|Q|}\int_Q \left|g_{\lambda, b_{j}}^{*,j}(\vec{f})(\mu)\right|^\delta d\mu\right)^{\frac1\delta} \leq C \| b_{j} \|_{BMO_{\theta}(\varphi)}\left(M_{\varepsilon,\varphi,\eta}^\triangle (g_{\lambda}^{*}(\vec f))(x)+\mathcal{M}_{l,\varphi,\eta}(\vec f)(x)\right),
	\end{align*}
	then
	\begin{align*}
	&\left(\frac1{\varphi(Q)^{\eta}|Q|}\int_Q \left|g_{\lambda, \sum \vec{b}}^{*}(\vec{f})(\mu)\right|^\delta d\mu\right)^{\frac1\delta}\\
	&\leq C\sum_{j=1}^{m}\left(\frac1{\varphi(Q)^{\eta}|Q|}\int_Q \left|g_{\lambda, b_{j}}^{*,j}(\vec{f})(\mu)\right|^\delta d\mu\right)^{\frac1\delta}\\
	&\leq  C \sum_{j=1}^{m}\| b_{j} \|_{BMO_{\theta}(\varphi)}
	\left(M_{\varepsilon,\varphi,\eta}^\triangle \big(g_{\lambda}^{*}(\vec f)\big)(x)+\mathcal{M}_{l,\varphi,\eta}(\vec f)(x)\right).
	\end{align*}
											
	According to the above two cases, we obtain
	\begin{align*} 
	&M_{\delta,\varphi,\eta}^{\sharp,\triangle}\big(g_{\lambda,\sum \vec b}^{*}(\vec f)\big)(x)\simeq \Bigg(\mathop{\sup}\limits_{{\rm Q}\ni x,r<1}\mathop{\inf}\limits_{ c}\frac{1}{|Q|}\int_{Q}\left|\big|g_{\lambda,\sum \vec b}^{*}(\vec f)(\mu)\big|^{\delta}-c\right|d\mu\\
	&\ \ \ \ \ \ \ \ \ \ \ \ \ \ \ \ \ \ \ \ \ \ \ \ \ \ \  \quad +\mathop{\sup}\limits_{{\rm Q}\ni x,r\geq1}\frac{1}{\varphi(Q)^{\eta}|Q|}\int_{Q}\left|g_{\lambda,\sum \vec b}^{*}(\vec f)(\mu)\right|^{\delta}d\mu\Bigg)^{\frac{1}{\delta}}\\	
	%&\leq \mathop{\sup}\limits_{{\rm Q}\ni x,r<1}\mathop{\inf}\limits_{\rm C}\left(\frac{1}{|Q|}\int_{Q}\left|\big(g_{\lambda,\sum \vec b}^{*}(\vec f)\big)(\mu)-(A_{1}+A_{2})\right|^{\delta}d\mu\right)^{\frac{1}{\delta}}\\
	%& \quad +\mathop{\sup}\limits_{{\rm Q}\ni x,r\geq1}\left(\frac{1}{\varphi(Q)^{\eta}|Q|}\int_{Q}\left|\big(g_{\lambda,\sum \vec b}^{*}(\vec f)\big)(\mu)\right|^{\delta}d\mu\right)^{\frac{1}{\delta}}\\	
	&\ \ \ \ \ \ \ \ \ \ \ \ \ \ \ \ \ \ \ \ \ \ \ \ \ \ \ 
	\leq C \sum_{j=1}^{m}\| b_{j} \|_{BMO_{\theta}(\varphi)}
	\left(M_{\varepsilon,\varphi,\eta}^\triangle (g_{\lambda}^{*}(\vec f))(x)+\mathcal{M}_{l,\varphi,\eta}(\vec f)(x)\right).
	\end{align*}
											
	The proof of the Theorem \ref{Theorem3.1} is finished.
											
	\end{proof}
	\end{theorem}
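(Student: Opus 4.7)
The plan is to mirror the strategy of the proof of Theorem \ref{Theorem 2.1}, but with the commutator trick built in. By symmetry it suffices to treat the case $\theta_1=\cdots=\theta_m=\theta$, and since $g_{\lambda,\sum\vec b}^{*}=\sum_{j=1}^m g_{\lambda,b_j}^{*,j}$, after using the elementary inequality $||A|^{\delta}-|B|^{\delta}|\le |A-B|^{\delta}$ for $0<\delta<1$ it is enough to control, for each fixed $j$, the averages
\begin{align*}
\Big(\tfrac{1}{|Q|}\!\int_Q |g_{\lambda,b_j}^{*,j}(\vec f)(\mu)-A_j|^{\delta}d\mu\Big)^{1/\delta}\quad (r<1),\qquad \Big(\tfrac{1}{\varphi(Q)^{\eta}|Q|}\!\int_Q |g_{\lambda,b_j}^{*,j}(\vec f)(\mu)|^{\delta}d\mu\Big)^{1/\delta}\quad (r\ge 1)
\end{align*}
for an appropriate constant $A_j$. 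As in Theorem \ref{Theorem 2.1}, I would split $f_j=f_j^{0}+f_j^{\infty}$ relative to $Q^{*}:=14n\sqrt{mn}Q$ in the first case and $\widetilde Q^{*}:=8Q$ in the second, expand $\prod_j f_j$ into the ``all local'' term plus the family $\mathscr{L}$ of terms containing at least one $f_i^{\infty}$, and choose $A_j$ to absorb the pointwise-in-$\mu_0$ value of the non-local contributions (with $\mu_0\in 4Q\setminus 3Q$).

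The decisive step is the commutator decomposition: writing $b_j(\mu)-b_j(y_j)=(b_j(\mu)-\lambda_j)+(\lambda_j-b_j(y_j))$ with $\lambda_j:=(b_j)_{Q^{*}}$ (respectively $\widetilde\lambda_j:=(b_j)_{\widetilde Q^{*}}$), I split each relevant quantity into three pieces $H_1+H_2+H_3$: a piece in which $b_j(\mu)-\lambda_j$ is pulled out of the integral and the remaining $g_\lambda^{*}(\vec f)$ is controlled by $M_{\varepsilon,\varphi,\eta}^{\triangle}(g_\lambda^{*}(\vec f))$ via H\"older (with exponents $q_1,q_2$ chosen so that $1<\delta q_2<\varepsilon$, which forces the assumption $\eta>\theta/(1/\delta-1/\varepsilon)$ to buy back the BMO growth); a local piece $H_2$ in which the weighted weak endpoint $g_\lambda^{*}:L^{s_1}\times\cdots\times L^{s_m}\to L^{s,\infty}$ from Definition \ref{definition1.1}(ii) is applied to $g_\lambda^{*}(f_1^{0},\ldots,(\lambda_j-b_j)f_j^{0},\ldots,f_m^{0})$ via Lemma \ref{Lemma2.7}, followed by H\"older with exponents $h_i=l/s_i$ to produce the $\mathcal{M}_{l,\varphi,\eta}$ factor and a BMO factor from Lemma \ref{Lemma3.1}(i); and a non-local piece $H_3$ (for $r<1$) or its analogue (for $r\ge 1$).

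For $H_3$ when $r<1$, I would set $\Delta_k:=Q(\mu_0,2^{k}\sqrt{mn}|\mu-\mu_0|)$, note $\Delta_2\subset Q^{*}$ and $\Delta_{k+2}\subset 2^{k}Q^{*}$, apply Minkowski's inequality to pull the $d\vec y$ integral outside, then apply H\"older with $h=l/q'$ on the dyadic annuli. The smoothness condition \eqref{1.4} produces the factor $C_k 2^{-kmn/q'}|\mu-\mu_0|^{-mn/q'}(1+2^{k}r)^{-N}$, and Lemma \ref{Lemma3.1}(ii) produces the factor $k\|b_j\|_{BMO_\theta(\varphi)}\varphi(2^{k}Q^{*})^{\theta}$; choosing $N>m\eta/l+\theta$ makes the sum $\sum_k kC_k$ convergent by hypothesis, yielding the bound $C\|b_j\|_{BMO_\theta(\varphi)}\mathcal{M}_{l,\varphi,\eta}(\vec f)(x)$. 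For $r\ge 1$, the analogous piece $\widetilde H_3$ is handled identically but using only the size condition \eqref{1.1} on the annuli $(2^{k+3}Q)^{m}\setminus(2^{k+2}Q)^{m}$, where $\sum_j|\mu-y_j|\sim 2^{k}r$, and the extra factor $\varphi(Q)^{-\eta/\delta}$ is absorbed by $\varphi(2^{k+3}Q)^{m\eta/l+\theta}(1+2^{k}r)^{-N}$ thanks to the choice $\eta>\theta/(1/\delta-m/l)$.

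The main obstacle I anticipate is bookkeeping: carefully checking that the various H\"older exponents ($q_1,q_2$, $h=l/q'$, $h_j=l/s_j$, and the pair $l,l'$ in the $r\ge 1$ case) are simultaneously admissible, and that the conditions $0<\delta<\varepsilon<1/m$, $q'<l<\infty$, $\eta>(\max_j\theta_j)/(1/\delta-1/\varepsilon)$, together with $N>m\eta/l+\theta$ and $\sum_k kC_k<\infty$, are exactly what is needed to close every estimate. Once this is verified, combining the two cases $r<1$ and $r\ge 1$ in the definition of $M_{\delta,\varphi,\eta}^{\sharp,\triangle}$ yields the claimed pointwise bound.
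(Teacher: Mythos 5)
Your proposal follows essentially the same route as the paper's proof: the same splitting $f_j=f_j^0+f_j^\infty$ with respect to $Q^*=14n\sqrt{mn}Q$ (resp.\ $\widetilde Q^*=8Q$), the same choice of $A_j$ at $\mu_0\in 4Q\setminus 3Q$, the same commutator decomposition into $H_1+H_2+H_3$ handled respectively by H\"older plus Lemma \ref{Lemma3.1}, the weak endpoint via Lemma \ref{Lemma2.7}, and the generalized kernel condition \eqref{1.4} (size condition \eqref{1.1} when $r\ge 1$) with $N>m\eta/l+\theta$ and $\sum_k kC_k<\infty$. The only slip is the stated requirement ``$1<\delta q_2<\varepsilon$'', which is impossible since $\varepsilon<1/m\le 1$; what is needed, and what the paper takes, is $1<q_2<\min\{\varepsilon/\delta,\,1/(1-\delta)\}$, giving $\delta q_2<\varepsilon$ and $\delta q_1>1$.
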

	\subsection{Boundedness of Multilinear Commutator on weighted Lebesgue Spaces}
\quad\quad In this subsection, with the help of Theorem \ref{Theorem3.1}, we obtain the boundedness of multilinear commutator on weighted Lebesgue spaces.
	\begin{theorem}\label{Theorem3.2} 
	Suppose that $m\geq2$, $g_{\lambda}^{*}$ is the new multilinear Littlewood--Paley function with generalized kernel as in Definition \ref {definition1.1}. Let ${\textstyle \sum_{k=1}^{\infty }}  k C_k<\infty$, $\vec{\theta}=(\theta_1,\ldots,\theta_m) $, $ \theta_j\geq 0 $, $ j=1,\ldots,m $, $\vec{p}=(p_1,\ldots,p_m)$, $1/p=1/p_1+\cdots+1/p_m$, $\vec \omega=(\omega_1,\ldots,\omega_m)\in A_{\vec{p}/q'}^\infty(\varphi)$, $v_{\vec\omega}= {\textstyle \prod_{j=1}^{m}} \omega_j^{p/p_j}$ and $\vec{b}\in BMO_{\vec\theta}^m(\varphi)$. If $q'<p_j<\infty$, $j=1,\ldots,m$, then there exists a constant $C>0$ such that
	\begin{align*}
	\| g_{\lambda,\sum \vec b}^{*}(\vec f)\|_{L^{p}(v_{\vec\omega})}\le C \sum_{j=1}^{m}\|b_{j} \|_{BMO_{\theta_{j}}(\varphi)}\prod_{j=1}^{m}\| f_j \|_{L^{p_j}(\omega_j)}.\nonumber
	\end{align*}								
	\end {theorem}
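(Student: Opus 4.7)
The plan is to reduce the weighted $L^p$ estimate to the dyadic sharp maximal function via Lemma \ref{Lemma2.3}, substitute the pointwise bound of Theorem \ref{Theorem3.1}, and then control each of the two resulting terms using the machinery already developed---the sharp maximal estimate of Theorem \ref{Theorem 2.1} together with Lemma \ref{Lemma2.5}.

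First I would select the auxiliary exponents consistently. Since $\vec\omega\in A_{\vec p/q'}^\infty(\varphi)$ with each $p_j/q'>1$, Lemma \ref{Lemma3.2}(ii) furnishes some $r>1$ with $\vec\omega\in A_{\vec p/(q'r)}^\infty(\varphi)$; set $l:=q'r$, so that $q'<l<\min_j p_j$ and each $p_j/l>1$. Next pick $0<\delta<\varepsilon<1/m$. By Lemma \ref{Lemma2.2} applied to $\vec p/q'$ one gets $v_{\vec\omega}\in A_{mp/q'}^\infty(\varphi)$, and since $\delta,\varepsilon<1/m<q'/m$ Lemma \ref{Lemma2.1}(i) upgrades this to $v_{\vec\omega}\in A_{p/\delta}^\infty(\varphi)\cap A_{p/\varepsilon}^\infty(\varphi)$. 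Finally choose $\eta$ larger than both the $\eta_0$ from Lemma \ref{Lemma2.5} and the threshold $(\max_j\theta_j)/(1/\delta-1/\varepsilon)$ required by Theorem \ref{Theorem3.1}; since $\mathcal{M}_{\varphi,\eta}$ decreases in $\eta$, enlarging $\eta$ costs nothing.

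Working first with $\vec f$ bounded with compact support (extending by density at the end) so that the left-hand side is a priori finite, Lemma \ref{Lemma2.3} applied to $|g_{\lambda,\sum\vec b}^{*}(\vec f)|^\delta$ in $L^{p/\delta}(v_{\vec\omega})$ yields
\[
\|g_{\lambda,\sum\vec b}^{*}(\vec f)\|_{L^p(v_{\vec\omega})}\le C\|M_{\delta,\varphi,\eta}^{\sharp,\triangle}(g_{\lambda,\sum\vec b}^{*}(\vec f))\|_{L^p(v_{\vec\omega})}.
\]
Inserting Theorem \ref{Theorem3.1} pointwise reduces the problem to bounding $\|M_{\varepsilon,\varphi,\eta}^\triangle(g_\lambda^{*}(\vec f))\|_{L^p(v_{\vec\omega})}$ and $\|\mathcal{M}_{l,\varphi,\eta}(\vec f)\|_{L^p(v_{\vec\omega})}$. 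For the first term, apply Lemma \ref{Lemma2.3} a second time to $|g_\lambda^{*}(\vec f)|^\varepsilon$ in $L^{p/\varepsilon}(v_{\vec\omega})$ and then Theorem \ref{Theorem 2.1} (permissible because $\varepsilon<1/m$) to dominate it by $C\|\mathcal{M}_{q',\varphi,\eta}(\vec f)\|_{L^p(v_{\vec\omega})}$; since $\varphi(Q)\ge 1$ and $q'<l$, the $L^{q'}$--$L^{l}$ averaging inequality gives $\mathcal{M}_{q',\varphi,\eta}(\vec f)\le\mathcal{M}_{l,\varphi,\eta}(\vec f)$. Thus both pieces are controlled by $\|\mathcal{M}_{l,\varphi,\eta}(\vec f)\|_{L^p(v_{\vec\omega})}=\|\mathcal{M}_{\varphi,\eta}(|\vec f|^{l})\|_{L^{p/l}(v_{\vec\omega})}^{1/l}$, which Lemma \ref{Lemma2.5} (applied with $\vec p/l$ in place of $\vec p$, using $\vec\omega\in A_{\vec p/l}^\infty(\varphi)$ and $p_j/l>1$) bounds by $C\prod_j\|f_j\|_{L^{p_j}(\omega_j)}$. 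Combining everything with the $BMO$ factors from Theorem \ref{Theorem3.1} gives the asserted inequality.

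The main obstacle is the coordinated parameter selection, since five interlocking constraints (on $\delta$, $\varepsilon$, $l$, $\eta$, and the weight-class membership of $v_{\vec\omega}$) must all be met simultaneously. In particular, producing $l\in(q',\min_j p_j)$ with $\vec\omega$ still in $A_{\vec p/l}^\infty(\varphi)$ depends crucially on the openness property of Lemma \ref{Lemma3.2}(ii), and one must verify that a single $\eta$ satisfies both the $BMO$-dependent lower bound forced by Theorem \ref{Theorem3.1} and the threshold supplied by Lemma \ref{Lemma2.5}. The a priori finiteness step (approximation by bounded, compactly supported $\vec f$ before invoking Lemma \ref{Lemma2.3}, followed by a limit argument) is routine but deserves a brief justification.
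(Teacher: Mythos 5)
Your proposal is correct and follows essentially the same route as the paper: Lemma \ref{Lemma3.2}(ii) to get $l=q'r$ with $\vec\omega\in A_{\vec p/l}^\infty(\varphi)$, Lemma \ref{Lemma2.3} to pass to $M_{\delta,\varphi,\eta}^{\sharp,\triangle}$, Theorem \ref{Theorem3.1} pointwise, a second sharp-function step via Theorem \ref{Theorem 2.1} for the $M_{\varepsilon,\varphi,\eta}^\triangle(g_\lambda^*(\vec f))$ term, and finally Lemma \ref{Lemma2.5}. The only (harmless) deviation is the order of parameter selection: the paper fixes $\eta=\eta_0$ from Lemma \ref{Lemma2.5} and then shrinks $\delta$ so that $\eta_0>(\max_j\theta_j)/(1/\delta-1/\varepsilon)$, whereas you fix $\delta<\varepsilon<1/m$ and enlarge $\eta$ beyond $\eta_0$, which is justified by the monotonicity of $\mathcal{M}_{\varphi,\eta}$ in $\eta$.
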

\begin{proof}By Lemma \ref{Lemma3.2} for $\vec \omega \in A_{\vec p/q'}^\infty(\varphi)$, there exists a $k>1$, such that $\vec \omega \in A_{\vec 	p/(q'k)}^\infty(\varphi)$ and $p_{j}/(q'k)>1$, $j=1,\ldots,m$. 
	Denote $ l=q'k$, then $\vec \omega \in A_{\vec p/l}^\infty(\varphi)$, $l>q'$ and $ 
	p_{j}/l>1 $, $ j=1,\dots ,m $.  
	Take $\eta=\eta_0$ in Lemma \ref{Lemma2.5} for $\vec \omega \in A_{\vec	p/l}^\infty(\varphi)$. It concludes from Lemma \ref{Lemma2.2} and Lemma \ref{Lemma2.1} that $v_{\vec \omega} \in A_{mp/l}^\infty (\varphi) \subset A_{p/\delta }^\infty (\varphi)$, where $\varepsilon$ and $\delta$ are taken by
	\begin{align*}
		0<\varepsilon<\frac1m, 0<\delta<\frac 1{(\frac 1 \varepsilon +\frac{\max_{1\leq j \leq m}\theta_{j}}{\eta_0})}.
	\end{align*}
	
	It follows from	Lemma \ref{Lemma2.3}, Theorem \ref{Theorem3.1} and Theorem \ref{Theorem 2.1} that						
	\begin{align*}
		\| g_{\lambda,\sum \vec b}^{*}(\vec f)\|_{L^{p}(v_{\vec\omega})}
		&=\left \| |g_{\lambda,\sum \vec b}^{*}(\vec f)|^\delta \right \| _{L^{p/ \delta}(v_{\vec\omega})}^{1/\delta }
		\leq \| M_{\varphi,\eta_0}^{\triangle}(|g_{\lambda,\sum \vec b}^{*}(\vec f)|^\delta) \|_{L^{p/ \delta}(v_{\vec\omega})}^{1/\delta }\\
		&\leq C\| M_{\delta,\varphi,\eta_0}^{\sharp,\triangle}(g_{\lambda,\sum \vec b}^{*}(\vec f)) \|_{L^{p}(v_{\vec\omega})}\\
		%& \leq C\| b \|_{BMO_{\theta}(\varphi)} \| M_{\varepsilon,\varphi,\eta_0}^\triangle (g_{\lambda}^{*}(\vec f))+\mathcal{M}_{t,\varphi,\eta_0}(\vec f)\|_{L^{p}(v_{\vec\omega})}\\
		& \leq  C\sum_{j=1}^{m}\|b_{j} \|_{BMO_{\theta_{j}}(\varphi)}\left ( \| 
		M_{\varepsilon,\varphi,\eta_0}^\triangle (g_{\lambda}^{*}(\vec 	f))\|_{L^{p}(v_{\vec\omega})}+ \|\mathcal{M}_{l,\varphi,\eta_0}(\vec 	f)\|_{L^{p}(v_{\vec\omega})} \right ) \\
		&\leq C \sum_{j=1}^{m}\|b_{j} \|_{BMO_{\theta_{j}}(\varphi)}
		\|\mathcal{M}_{l,\varphi,\eta_0}(\vec f)\|_{L^{p}(v_{\vec\omega})},
	\end{align*}
	where
\begin{align*}
	\| M_{\varepsilon,\varphi,\eta_0}^\triangle (g_{\lambda}^{*}(\vec 
	f))\|_{L^{p}(v_{\vec\omega})}
	&\leq  C\| M_{\varepsilon, \varphi,\eta_0}^{\sharp, \triangle} (g_{\lambda}^{*}(\vec f)) 	\|_{L^{p}(v_{\vec\omega})}	\leq C \| \mathcal{M}_{q',\varphi,\eta_0}(\vec f) 	\|_{L^{p}(v_{\vec\omega})}\\
	&\leq C \| \mathcal{M}_{l,\varphi,\eta_0}(\vec f) \|_{L^{p}(v_{\vec\omega})}.
\end{align*}

Then by Lemma \ref{Lemma2.5},
\begin{align*}
	\| g_{\lambda,\sum \vec b}^{*}(\vec f)\|_{L^{p}(v_{\vec\omega})}
	&\leq C \sum_{j=1}^{m}\|b_{j} \|_{BMO_{\theta_{j}}(\varphi)}	\|\mathcal{M}_{\varphi,\eta_0}(|\vec f|)^l\|^{1/l}_{L^{p/l}(v_{\vec\omega})}\\
	&\leq C  \sum_{j=1}^{m}\|b_{j} \|_{BMO_{\theta_{j}}(\varphi)}
	\prod_{j=1}^m\| |f_j|^l \|^{1/l}_{L^{p_j/l}(\omega_j)}\\
	&= C  \sum_{j=1}^{m}\|b_{j} \|_{BMO_{\theta_{j}}(\varphi)}
	\prod_{j=1}^m\| f_j \|_{L^{p_j}(\omega_j)}.
\end{align*}
\end{proof}
\section{The Boundedness of Multilinear Iterative Commutator on weighted Lebesgue Spaces}\label{sec4}
\quad\quad In Subsection 4.1, we give some definition which will be used later. In Subsection 4.2, we establish the sharp maximal pointwise estimates. In Subsection 4.3, we prove the boundedness of multilinear iterative commutator on weighted Lebesgue spaces.
\subsection{Definitions}
\begin{definition}\label {Definition4.1}
	Let $C_{j}^{m}$ be a family of the finite subsets $\xi = \left\{\xi(1),\ldots , \xi(j)\right\}$ consisting of $j$ distinct elements from the set $\left\{1,\ldots,m\right\}$ for $j, m \in  \mathbb{N}^{+}$ and $1 \leq j \leq m$. In addition, if $a < b$, then $\xi(a) <\xi (b)$. For any $\xi \in C_{j}^{m}$, the complementary sequence of $\xi$, denoted as $\xi'$, is defined as $\left\{1, ..., m\right\} \backslash \xi$. In particular, $C_{0}^{m} =\phi$. For any m-fold sequence $\vec{b}$ and $\xi \in C_{j}^{m}$, the j-fold sequence $\vec b_{\xi} = \left\{\vec{b}_{\xi(1)}, \ldots, \vec{b}_{\xi(j)}\right\}$ is a finite subset of $\vec b = \left(b_{1}, \ldots, b_{m}\right)$.
\end{definition}
Let $\xi \in C_{j}^{m}$ and $\vec b_{\xi} = \left\{{b}_{\xi(1)}, \ldots, {b}_{\xi(j)}\right\}$, the alternative integral form of the aforementioned equation would be to state
\begin{align*}
	&g_{\lambda,\prod \vec b_{\xi}}\left(f_{1},\ldots,f_{m}\right)(x)=\Bigg(\iint_{\mathbb{R}_{+}^{n+1}}\left(\frac{t}{|z|+t}\right)^{n\lambda}\big|\int_{({\mathbb{R}^n})^m}\prod_{i=1}^{j}\left(b_{\xi(i)}(x)-b_{\xi(i)}(y_{\xi(i)})\right)\\
	&\quad \quad \quad \quad \quad \quad \quad \quad \quad \quad \quad\times K_{t}(z,y_{1},\ldots,y_{m})f_{1}(y_{1})\cdots f_{m}(y_{m})\big|^{2}\frac{dzdt}{t^{n+1}}\Bigg)^{\frac{1}{2}}.
\end{align*} 
\subsection{Sharp Maximal Pointwise Estimates}
\quad\quad   In this subsection, the pointwise estimates for the sharp maximal function of the multilinear iterated commutator is established. The following is the main result of this subsection.
\begin{theorem}\label{Theorem4.1} 
	Suppose that $g_{\lambda}^{*}$ is the new multilinear Littlewood--Paley function with generalized kernel as in Definition \ref{definition1.1}. Suppose $\vec{b}\in BMO_{\vec\theta}^m(\varphi)$, $\vec{\theta}=\left(\theta_{1},\ldots,\theta_{m}\right)$ with $\theta_{j}\ge 0$, $j=1, \ldots,m$. Let $\sum_{k=1}^{\infty}k^{m}C_{k}<\infty$, $0<\delta<\varepsilon<1/m$, $q'<l<\infty$ and $\eta >(\sum_{j=1}^{m}\theta_{j})/\left(1/\delta-1/\varepsilon\right)$. There exists a constant $C>0$ such that,
	\begin{align*} 
		&M_{\delta,\varphi,\eta}^{\sharp,\triangle}(g_{\lambda,\prod \vec b}^{*}(\vec f))(x)\leq C \prod_{j=1}^{m}\| b_j \|_{BMO_{\theta_{j}}(\varphi)} \left(M_{\varepsilon,\varphi,\eta}^\triangle (g_{\lambda}^{*}(\vec f))(x)+\mathcal{M}_{l,\varphi,\eta}(\vec f)(x)\right)\\
		&\quad \quad \quad \quad \quad \quad \quad \quad \quad
		+C\sum_{j=1}^{m-1}\sum_{\xi \in C_{j}^{m}}\prod_{i=1}^{m}\| b_{\xi (i)}\|_{BMO_{\theta_{\xi(i)}}(\varphi)}M_{\varepsilon,\varphi,\eta}^\triangle\Big(g_{\lambda,\prod \vec b_{\xi'}}^{*}(\vec f)\Big)(x)
	\end{align*}
	for all $\vec f=(f_1,\ldots,f_m)$ of bounded measurable functions with compact suppport.
\end{theorem}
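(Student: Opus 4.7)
My plan is to mimic the structure of the proof of Theorem \ref{Theorem3.1}, but to replace the single commutator $(b_j(\mu)-b_j(y_j))$ with the full product $\prod_{j=1}^{m}(b_j(\mu)-b_j(y_j))$, which I expand via the identity
\begin{align*}
\prod_{j=1}^m\bigl(b_j(\mu)-b_j(y_j)\bigr)=\sum_{S\subseteq\{1,\dots,m\}}\prod_{j\in S}\bigl(b_j(\mu)-\lambda_j\bigr)\prod_{j\in S'}\bigl(\lambda_j-b_j(y_j)\bigr),
\end{align*}
where $\lambda_j=(b_j)_{Q^*}$ in Case 1 ($r<1$) and $\lambda_j=(b_j)_{\widetilde Q^*}$ in Case 2 ($r\geq 1$). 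Fix $x\in\mathbb{R}^n$ and a dyadic cube $Q=Q(x_0,r)\ni x$, split $f_j=f_j^{0}+f_j^{\infty}$ with the same $Q^*=14n\sqrt{mn}Q$ (respectively $\widetilde Q^*=8Q$), pick $\mu_0\in 4Q\setminus 3Q$, and define the subtraction constant $A$ in exact analogy with Theorem \ref{Theorem3.1} so that $K_t(\mu_0-z,\vec y)$ replaces $K_t(\mu-z,\vec y)$ in the tail sum indexed by $\mathscr{L}$. Applying Minkowski's inequality to the $L^2(dz\,dt/t^{n+1})$ norm turns $|g_{\lambda,\prod\vec b}^{*}(\vec f)(\mu)-A|$ into a sum of $2^m$ manageable pieces indexed by $S$.

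The $2^m$ pieces fall into three families. For $S=\{1,\dots,m\}$ the inside integral collapses to $g_\lambda^{*}(\vec f)(\mu)$ multiplied by $\prod_{j=1}^{m}|b_j(\mu)-\lambda_j|$; a single Hölder step with exponents $\delta q_1$, $\delta q_2$ (choosing $q_2$ just below $\varepsilon/\delta$) together with $m$ applications of Lemma \ref{Lemma3.1} gives the leading term $\prod_{j=1}^m\|b_j\|_{BMO_{\theta_j}(\varphi)}M_{\varepsilon,\varphi,\eta}^{\triangle}(g_\lambda^{*}(\vec f))(x)$; here the hypothesis $\eta>(\sum_j\theta_j)/(1/\delta-1/\varepsilon)$ is exactly what absorbs the accumulated $\varphi(Q)^{\sum_j\theta_j}$ factor. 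For a proper nonempty $S\in C_j^{m}$ with $1\leq j\leq m-1$, the outside factor $\prod_{i\in S}(b_i(\mu)-\lambda_i)$ comes out, and the inside is precisely $g_{\lambda,\prod\vec b_{S'}}^{*}(\vec f)(\mu)$ up to a sign, so the same Hölder/Lemma \ref{Lemma3.1} argument produces the intermediate sum in the claim. For $S=\emptyset$ the outside factor is trivial and the inside becomes $g_\lambda^{*}((\lambda_1-b_1)f_1,\dots,(\lambda_m-b_m)f_m)(\mu)$, which I would handle by the $f_j=f_j^{0}+f_j^{\infty}$ decomposition just as in $H_2,H_3$ (and $\widetilde H_2,\widetilde H_3$) of Theorem \ref{Theorem3.1}.

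The main technical obstacle is this last family. The local part uses $g_\lambda^{*}:L^{s_1}\times\cdots\times L^{s_m}\to L^{s,\infty}$, Lemma \ref{Lemma2.7} and Hölder with $h=l/q'$ to distribute one $BMO$ factor onto each $(\lambda_j-b_j)f_j^{0}$. The tail pieces with at least one $f_j^\infty$ rest on Minkowski together with the generalized smoothness \eqref{1.4} in Case 1 and the size condition \eqref{1.1} in Case 2, decomposing along the shells $\Delta_k$ (respectively $2^{k+3}Q$). On the $k$-th shell Lemma \ref{Lemma3.1}(ii) produces a factor $k\|b_j\|_{BMO_{\theta_j}(\varphi)}\varphi(\text{shell})^{\theta_j}$ for \emph{each} of the $m$ $BMO$ functions, so the series to be controlled has the form $\sum_{k=1}^\infty k^{m}C_k(1+2^kr)^{m\eta/l+\sum_j\theta_j-N}$; this is precisely why the statement requires $\sum_k k^{m}C_k<\infty$ (versus $\sum_k kC_k$ in Theorem \ref{Theorem3.1}) and why one chooses $N>m\eta/l+\sum_j\theta_j$. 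Finally, recognizing the quantities $\inf_c\tfrac{1}{|Q|}\int_Q\bigl||g_{\lambda,\prod\vec b}^{*}(\vec f)|^\delta-c\bigr|d\mu$ (for $r<1$) and $\tfrac{1}{\varphi(Q)^\eta|Q|}\int_Q|g_{\lambda,\prod\vec b}^{*}(\vec f)|^\delta d\mu$ (for $r\geq 1$) as the two halves of $M_{\delta,\varphi,\eta}^{\sharp,\triangle}$ assembles the estimates from both cases into the stated pointwise bound.
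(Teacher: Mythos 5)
Your overall architecture matches the paper's proof: the same splitting $f_j=f_j^{0}+f_j^{\infty}$ with $Q^*$ (resp. $\widetilde Q^*$), the subtraction constant $A$ built from $K_t(\mu_0-z,\vec y)$ for the tail sum, H\"older plus Lemma \ref{Lemma3.1} for the local blocks, the shell estimates that force $\sum_k k^m C_k<\infty$ and $N>m\eta/l+\sum_j\theta_j$, and the assembly of the two cases into $M^{\sharp,\triangle}_{\delta,\varphi,\eta}$. The genuine gap is in your treatment of the proper nonempty subsets $S$. Your expansion puts the factors $\prod_{j\in S'}\bigl(\lambda_j-b_j(y_j)\bigr)$ inside the integral, so the resulting object is $\prod_{i\in S}|b_i(\mu)-\lambda_i|\cdot g_{\lambda}^{*}\bigl(\ldots,(\lambda_j-b_j)f_j,\ldots\bigr)(\mu)$, with the \emph{constants} $\lambda_j$; this is \emph{not} $g_{\lambda,\prod \vec b_{S'}}^{*}(\vec f)(\mu)$, which carries the differences $b_j(\mu)-b_j(y_j)$ evaluated at the point $\mu$. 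Hence the middle terms $M_{\varepsilon,\varphi,\eta}^{\triangle}\bigl(g_{\lambda,\prod\vec b_{\xi'}}^{*}(\vec f)\bigr)(x)$ of the claimed bound are not obtained "up to a sign." Moreover the terms you do produce cannot be salvaged by your $H_2/H_3$-type argument: they contain the full $f_j$ (tails included) with no kernel difference $K_t(\mu-z,\vec y)-K_t(\mu_0-z,\vec y)$ available — the constant $A$ can only serve the block $S=\emptyset$, because for $S\neq\emptyset$ the $\mu$-dependent prefactor $\prod_{i\in S}(b_i(\mu)-\lambda_i)$ forbids subtracting a constant — and in Case 1 ($r<1$) the size condition \eqref{1.1} alone gives shell contributions of order $(1+2^kr)^{-N}$ that do not sum.

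The paper avoids this by a different algebraic identity that keeps the genuine commutator differences on the complementary indices; for $m=2$,
\begin{align*}
\bigl(b_1(\mu)-b_1(y_1)\bigr)\bigl(b_2(\mu)-b_2(y_2)\bigr)
&=-\bigl(b_1(\mu)-\lambda_1\bigr)\bigl(b_2(\mu)-\lambda_2\bigr)
+\bigl(b_1(\mu)-\lambda_1\bigr)\bigl(b_2(\mu)-b_2(y_2)\bigr)\\
&\quad+\bigl(b_2(\mu)-\lambda_2\bigr)\bigl(b_1(\mu)-b_1(y_1)\bigr)
+\bigl(b_1(y_1)-\lambda_1\bigr)\bigl(b_2(y_2)-\lambda_2\bigr).
\end{align*}
With this decomposition the cross terms are literally $|b_1(\mu)-\lambda_1|\,g_{\lambda,b_2}^{*,2}(\vec f)(\mu)$ and $|b_2(\mu)-\lambda_2|\,g_{\lambda,b_1}^{*,1}(\vec f)(\mu)$ (in general $\prod_{i\in\xi}|b_i(\mu)-\lambda_i|\,g_{\lambda,\prod\vec b_{\xi'}}^{*}(\vec f)(\mu)$), so a single H\"older step and Lemma \ref{Lemma3.1} yield exactly the intermediate sum in the statement, while only the last block, with all factors $b_j(y_j)-\lambda_j$, is paired with $A$ and handled through the local weak-type bound and the shell estimates via \eqref{1.4} (Case 1) and \eqref{1.1} (Case 2), as you described. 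If you rewrite your subset expansion by substituting $\lambda_j-b_j(y_j)=\bigl(\lambda_j-b_j(\mu)\bigr)+\bigl(b_j(\mu)-b_j(y_j)\bigr)$ and regrouping, you recover precisely this identity, and the rest of your argument then goes through.
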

\begin{proof}
	To simplify the proof, we only consider the case when $m=2$ and  $\theta_1=\theta_2=\theta$. In fact, the same approach applies to all other cases.
	
	Fix a point $x\in \mathbb{R}^n$ and for any dyadic cube $Q:=Q(x_0,r)\ni x$, we consider two cases about the sidelength $r$ : $r<1$ and $r\geq1$.
	
	\noindent{\it \rm \textbf{Case 1}:} $r<1$. Let $Q^{*}=14n\sqrt{2n}Q$. Splitting each $f_j$ as $f_j=f_j^0+f_j^\infty=f_j \chi_{Q^{*}}+f_j \chi_{(Q^{*})^{c}}$. There is
	\begin{align*}
		\prod_{j=1}^2 
		f_j(y_j)=\prod_{j=1}^2 f_j^{0}(y_j)+\sum_{(\alpha_1,\alpha_2)\in 
			\mathscr{L}}f_1^{\alpha_1}(y_1) f_2^{\alpha_2}(y_2),
		\end {align*}
		where  $\mathscr{L}:=\{(\alpha_1,\alpha_2)$: there is at least one  
		$\alpha_j=\infty\}$.
		
		For $\mu\in Q$ and $\lambda_j:=(b_j)_{Q^{*}}$, $j=1,\ldots,m$. Take $\mu_{0}\in 4Q\setminus 3Q$ and
		\begin{align*}
			&A:=\left(\iint_{\mathbb{R}_{+}^{n+1}}\left(\frac{t}{|z|+t}\right)^{n\lambda}\bigg|\int_{({\mathbb{R}^n})^2} \sum_{(\alpha_1,\alpha_2)\in\mathscr{L}}K_{t}(\mu _{0}-z,\vec{y})\prod_{i=1}^{2}(b_{i}(y_{i})-\lambda_i)f_{i}^{\alpha_{i}}(y_{i})d\vec{y}\bigg| ^{2} \frac{dzdt}{t^{n+1}}\right)^{\frac{1}{2}}.
		\end {align*} 
			We then have
	\begin{align*}
		&\left|g_{\lambda,\prod \vec b}^{*}(\vec f)(\mu)-A\right|\\
		&\leq \Bigg(\iint_{\mathbb{R}_{+}^{n+1}}\left(\frac{t}{|z|+t}\right)^{n\lambda}\Big|\int_{({\mathbb{R}^n})^2}\Big(K_{t}(\mu-z,y_{1},y_{2})\left(b_{1}(\mu)-b_{1}(y_{1})\right)\left(b_{2}(\mu)-b_{2}(y_{2})\right)f_{1}(y_{1})f_{2}(y_{2})\\
		&\quad -\sum_{(\alpha_1, \alpha_2)\in\mathscr{L}}K_{t}(\mu_{0}-z,\vec{y})\left(b_{1}(y_{1})-\lambda_{1}\right)\left(b_{2}(y_{2})-\lambda_{2}\right)f_{1}^{\alpha_1}(y_{1})f_{2}^{\alpha_2}(y_{2})\Big)dy_{1}dy_{2}\Big|^{2}\frac{dzdt}{t^{n+1}}\Bigg)^{\frac{1}{2}}\\
		%&=\Bigg(\iint_{\mathbb{R}_{+}^{n+1}}\left(\frac{t}{|z|+t}\right)^{n\lambda}\Big|\int_{({\mathbb{R}^n})^m}K_{t}(\mu-z,\vec{y})\Big[\left(b_{1}(\mu)-\lambda_1\right)\left(b_{2}(\mu)-\lambda_2\right)\\
		%&\quad +\left(\lambda_1-b_{1}(\mu)\right)\left(b_{2}-\lambda_2\right)+ \left(\lambda_2-b_{2}(\mu)\right)\left(b_{1}-\lambda_1\right)+\left(b_{1}-\lambda_{1}\right)\left(b_{2}-\lambda_{2}\right) \Big]f_{1}(y_{1})f_{2}(y_{2})\\
		%&\quad -\sum_{(\alpha_1,\ldots,\alpha_m)\in\mathscr{L}}K_{t}(\mu_{0}-z,\vec{y})\left(b_{1}-\lambda_{1}\right)\left(b_{2}-\lambda_{2}\right)f_{1}^{\alpha_1}(y_{1})f_{2}^{\alpha_2}(y_{2})dy_{1}dy_{2}\Big|^{2}\frac{dzdt}{t^{n+1}}\Bigg)^{\frac{1}{2}}\\
		&=\Bigg(\iint_{\mathbb{R}_{+}^{n+1}}\left(\frac{t}{|z|+t}\right)^{n\lambda}\Big|\int_{({\mathbb{R}^n})^2}\bigg(K_{t}(\mu-z,y_{1},y_{2})\left(b_{1}(\mu)-\lambda_1\right)\left(\lambda_2-b_{2}(\mu)\right)f_{1}(y_{1})f_{2}(y_{2})  \\
		&\quad +K_{t}(\mu-z,y_{1},y_{2})\left(b_{1}(\mu)-\lambda_1
		\right)\left(b_{2}(
		\mu)-b_2(y_{2})\right)f_{1}(y_{1})f_{2}(y_{2})  \\
		&\quad +K_{t}(\mu-z,y_{1},y_{2})\left(b_{2}(\mu)-\lambda_2\right)\left(b_{1}(\mu)-b_1(y_{1})\right)f_{1}(y_{1})f_{2}(y_{2})  \\
		&\quad + K_{t}(\mu-z,y_{1},y_{2})
		\left(b_{1}(y_{1})-\lambda_{1}\right)\left(b_{2}(y_{2})-\lambda_{2}\right)f_{1}^{0}(y_{1})f_{2}^{0}(y_{2})\\
		&\quad +\sum_{(\alpha_1,\alpha_2)\in\mathscr{L}}K_{t}(\mu-z,y_{1},y_{2})
		\left(b_{1}(y_{1})-\lambda_{1}\right)\left(b_{2}(y_{2})-\lambda_{2}\right)f_{1}^{\alpha_1}(y_{1})f_{2}^{\alpha_2}(y_{2})\\
		&\quad -\sum_{(\alpha_1,\alpha_2)\in\mathscr{L}}K_{t}(\mu_{0}-z,y_{1},y_{2})
		\left(b_{1}(y_{1})-\lambda_{1}\right)\left(b_{2}(y_{2})-\lambda_{2}\right)f_{1}^{\alpha_1}(y_{1})f_{2}^{\alpha_2}(y_{2})\bigg)dy_{1}dy_{2}\Big|^{2}\frac{dzdt}{t^{n+1}}\Bigg)^{\frac{1}{2}}\\
		&\leq \Bigg(\iint_{\mathbb{R}_{+}^{n+1}}\left(\frac{t}{|z|+t}\right)^{n\lambda} \Big| \int_{({\mathbb{R}^n})^2}K_{t}(\mu-z,y_{1},y_{2})\left(b_{1}(\mu)-\lambda_1\right)\left(\lambda_2-b_{2}(\mu)\right)\\
		&\quad
		\times f_{1}(y_{1})f_{2}(y_{2})dy_{1}dy_{2} \Big|^{2} \frac{dzdt}{t^{n+1}}\Bigg)^{\frac{1}{2}}\\
		&\quad +\Bigg(\iint_{\mathbb{R}_{+}^{n+1}}\left(\frac{t}{|z|+t}\right)^{n\lambda} \Big| \int_{({\mathbb{R}^n})^2}K_{t}(\mu-z,y_{1},y_{2})\left(b_{1}(\mu)-\lambda_1\right)\left(b_2(\mu)-b_{2}(y_{2})\right)\\
		&\quad
		\times f_{1}(y_{1})f_{2}(y_{2})dy_{1}dy_{2} \Big|^{2} \frac{dzdt}{t^{n+1}}\Bigg)^{\frac{1}{2}}\\
		&\quad +\Bigg(\iint_{\mathbb{R}_{+}^{n+1}}\left(\frac{t}{|z|+t}\right)^{n\lambda} \Big| \int_{({\mathbb{R}^n})^2}K_{t}(\mu-z,y_{1},y_{2})\left(b_{2}(\mu)-\lambda_2\right)\left(b_1(\mu)-b_{1}(y_{1})\right) \\
		&\quad
		\times f_{1}(y_{1})f_{2}(y_{2})dy_{1}dy_{2} \Big|^{2} \frac{dzdt}{t^{n+1}}\Bigg)^{\frac{1}{2}}\\
		&\quad +\Bigg(\iint_{\mathbb{R}_{+}^{n+1}}\left(\frac{t}{|z|+t}\right)^{n\lambda} \Big| \int_{({\mathbb{R}^n})^2}K_{t}(\mu-z,y_{1},y_{2})\left(b_{1}(y_{1})-\lambda_1\right)\left(b_{2}(y_{2})-\lambda_2\right)\\
		&\quad
		\times f_{1}^{0}(y_{1})f_{2}^{0}(y_{2})dy_{1}dy_{2} \Big|^{2} \frac{dzdt}{t^{n+1}}\Bigg)^{\frac{1}{2}}\\
		&\quad +\sum_{(\alpha_1,\alpha_2)\in\mathscr{L}}\Bigg(\iint_{\mathbb{R}_{+}^{n+1}}\left(\frac{t}{|z|+t}\right)^{n\lambda} \Big| \int_{({\mathbb{R}^n})^2}\bigg(K_{t}(\mu-z,y_{1},y_{2})-K_{t}(\mu_{0}-z,y_{1},y_{2})\bigg)\\
		&\quad
		\times \left(b_{1}(y_{1})-\lambda_1\right)\left(b_{2}(y_{2})-\lambda_2\right)f_{1}^{\alpha_1}(y_{1})f_{2}^{\alpha_2}(y_{2})dy_{1}dy_{2} \Big|^{2} \frac{dzdt}{t^{n+1}}\Bigg)^{\frac{1}{2}}\\
		&=\left|b_1(\mu)-\lambda_1\right|\left|b_2(\mu)-\lambda_2\right|g_{\lambda}^{*}(f_1,f_2)(\mu)+\left|b_1(\mu)-\lambda_1\right|g_{\lambda,b_{2}}^{*,2}(f_1,f_2)(\mu)\\
		&\quad +\left|b_2(\mu)-\lambda_2\right|g_{\lambda,b_{1}}^{*,1}(f_1,f_2)(\mu) +g_{\lambda}^{*}((b_1-\lambda_1)f_1^{0},(b_2-\lambda_2)f_2^{0})(\mu)\\
		&\quad +\sum_{(\alpha_1,\alpha_2)\in\mathscr{L}}\Bigg(\iint_{\mathbb{R}_{+}^{n+1}}\left(\frac{t}{|z|+t}\right)^{n\lambda} \Big| \int_{({\mathbb{R}^n})^2}\bigg(K_{t}(\mu-z,\vec{y})-K_{t}(\mu_{0}-z,\vec{y})\bigg)\\
		&\quad
		\times \left(b_{1}(y_{1})-\lambda_1\right)\left(b_{2}(y_{2})-\lambda_2\right)f_{1}^{\alpha_1}(y_{1})f_{2}^{\alpha_2}(y_{2})dy_{1}dy_{2} \Big|^{2} \frac{dzdt}{t^{n+1}}\Bigg)^{\frac{1}{2}}.
	\end{align*}

	Thus
	\begin{align*}
		&\left(\frac1{|Q|}\int_Q \Big||g_{\lambda,\prod \vec b}^{*}(\vec 
		f)(\mu)|^\delta-|A|^\delta \Big| d\mu\right)^{\frac1\delta}\\
		&\leq \left(\frac1{|Q|}\int_Q \Big|g_{\lambda,\prod \vec b}^{*}(\vec 
		f)(\mu)-A \Big|^\delta d\mu\right)^{\frac1\delta}\\
		&\leq C \left(\frac{1}{|Q|}\int_{ Q}\Big|\left(b_1(\mu)-\lambda_1\right)\left(b_2(\mu)-\lambda_2\right)g_{\lambda}^{*}(f_1,f_2)(\mu)\Big|^{\delta}d\mu\right)^{\frac{1}{\delta}}\\
		&\quad +C \left(\frac{1}{|Q|}\int_{ Q}\Big|\left(b_1(\mu)-\lambda_1\right)g_{\lambda,b_{2}}^{*,2}(f_1,f_2)(x)(\mu)\Big|^{\delta}d\mu\right)^{\frac{1}{\delta}}\\
		&\quad +C \left(\frac{1}{|Q|}\int_{ Q}\Big|\left(b_2(\mu)-\lambda_2\right)g_{\lambda,b_{1}}^{*,1}(f_1,f_2)(x)(\mu)\Big|^{\delta}d\mu\right)^{\frac{1}{\delta}}\\
		&\quad +C \left(\frac{1}{|Q|}\int_{ Q}\Big|g_{\lambda}^{*}(\left(b_1-\lambda_1\right)f_1^{0},\left(b_2-\lambda_2\right)f_2^{0})(\mu)\Big|^{\delta}d\mu\right)^{\frac{1}{\delta}}\\
		&\quad +C \sum_{(\alpha_1,\alpha_2)\in\mathscr{L}}\Bigg(\frac{1}{|Q|}\int_{Q}\bigg(\iint_{\mathbb{R}_{+}^{n+1}}\left(\frac{t}{|z|+t}\right)^{n\lambda} \Big| \int_{({\mathbb{R}^n})^2}\big(K_{t}(\mu-z,\vec{y})-K_{t}(\mu_{0}-z,\vec{y})\big)\\
		&\quad \times \left(b_{1}(y_{1})-\lambda_1\right)\left(b_{2}(y_{2})-\lambda_2\right)f_{1}^{\alpha_1}(y_{1})f_{2}^{\alpha_2}(y_{2})dy_{1}dy_{2}\Big|^{2}\frac{dzdt}{t^{n+1}}\bigg)^{\frac{\delta}{2}}d\mu\Bigg)^{\frac{1}{\delta}}\\
		&:=M_{1}+M_{2}+M_{3}+M_{4}+M_{5}\\
		&:=M_{1}+M_{2}+M_{3}+M_{4}+\sum_{(\alpha_1,\alpha_2)\in\mathscr{L}}M_{5 \alpha_1,\ldots,\alpha_m}.
		\end {align*}
		
		Since $0<\delta<\varepsilon<\infty$, choose $1<q_{3}<\min\left\{\varepsilon/
		\delta, 1/(1-\delta)\right\}$	so that $\delta q_{3}<\varepsilon$ and  $\delta q_{3}'>1$. Take $1<q_1,q_2<\infty$ such that $1/q_1+1/q_2+1/q_3=1$, so $\delta q_1>1 $ and  $\delta q_2>1 $. By H\"{o}lder's inequality, we obtain that
		\begin{align*}
			M_{1}&\leq C\left( \frac1{|Q|}\int_Q 
			|b_1(\mu)-\lambda_1|^{\delta q_1}d\mu\right)^{\frac1{\delta q_1}}
			\left( \frac1{|Q|}\int_Q |b_2(\mu)-\lambda_2|^{\delta 
				q_2}d\mu\right)^{\frac1{\delta q_2}}\\
			&  \quad \times \left( \frac1{|Q|}\int_Q |g_{\lambda}^{*}(f_1,f_2)(\mu)|^{\delta 
				q_3}d\mu\right)^{\frac1{\delta q_3}}\\
			&\leq C\|b_1\|_{BMO_\theta(\varphi)} \|b_2\|_{BMO_\theta(\varphi)}\left( 
			\frac1{\varphi(Q)^\eta|Q|}\int_Q |g_{\lambda}^{*}(f_1,f_2)(\mu)|^\varepsilon 
			d\mu\right)^{\frac{1}{\varepsilon} }\\
			&\leq C\|b_1\|_{BMO_\theta(\varphi)} 
			\|b_2\|_{BMO_\theta(\varphi)}M_{\varepsilon,\varphi,\eta}^\triangle 
			(g_{\lambda}^{*}(f_1,f_2))(x).
			\end {align*}
			
		Now we estimate $M_{2}$. Using H\"{o}lder's inequality and Lemma \ref{Lemma3.1}, we have
		\begin{align*}
		M_{2}
		&\leq C\left( \frac1{|Q|}\int_Q 
		|b_1(\mu)-\lambda_1|^{\delta q_3'}d\mu\right)^{\frac1{\delta q_3'}} \left( \frac1{|Q|}\int_Q |g_{\lambda, b_{2}}^{*,2}(f_1,f_2)(\mu)|^{\delta q_3}d\mu\right)^{\frac1{\delta q_3}}\\
		&\leq C\|b_1\|_{BMO_\theta(\varphi)} M_{\varepsilon,\varphi,\eta}^\triangle (g_{\lambda,b_{2}}^{*,2}(f_1,f_2))(x).
		\end {align*}
				
		For the term $M_{3}$, similar to the estimate of $M_{2}$, we have
		\begin{align*}
		M_{3} \leq C \|b_2\|_{BMO_\theta(\varphi)} M_{\varepsilon,\varphi,\eta}^\triangle (g_{\lambda, b_{1}}^{*,1}(f_1,f_2))(x).
		\end{align*}
				
		For the term $M_{4}$, similar to the estimates of $H_2$, we can get
		\begin{align*}
		M_{4} \leq C \| b_1\|_{BMO_\theta(\varphi)}\|b_2\|_{BMO_\theta(\varphi)}\mathcal{M}_{l,\varphi,\eta}(\vec f)(x).
		\end {align*}
		
	Let $\Delta_k:=Q(\mu_{0}, 2^{k}\sqrt{2n}|\mu-\mu_{0}|)$, $ \mu\in Q $, $k\in \mathbb{N}_+$, $h=l/q'$ and $1/h+1/h'=1$. Since $q'<l$, then it can be inferred that $h>1$. Since $\mu\in Q$, $\mu_{0}\in 4Q\setminus 3Q$ and $\Delta_2 
	\subset Q^*$, we conclude $(\mathbb{R}^n)^2\setminus (Q^*)^2\subset 
	(\mathbb{R}^n)^2\setminus (\Delta_2)^2$, $|\mu-\mu_{0}|\sim r$ and $\Delta_{k+2} \subset 2^kQ^*$. Taking $N>2\eta/l+2\theta$, it follows from  H\"{o}lder's inquality, Minkowski's inequality, the smoothness condition \eqref{1.3} and Lemma \ref{Lemma3.1} that
	\begin{align*}
		M_{5 \alpha_1,\ldots,\alpha_m}
		&\leq\frac{C}{|Q|}\int_{ Q}\Bigg(\iint_{\mathbb{R}_{+}^{n+1}}\left(\frac{t}{|z|+t}\right)^{n\lambda}\Big(\int_{{({\mathbb{R}^n})^2}\backslash\left(Q^{*}\right)^{2}}|K_{t}(\mu-z,\vec{y})-K_{t}(\mu_{0}-z,\vec{y})|\\
		& \quad \times \prod_{i=1}^{2}|b_{i}(y_{i})-\lambda_{i}||f_{i}(y_{i})| dy_{1}dy_{2} \Big)^{2}  \frac{dzdt}{t^{n+1}}\Bigg)^{\frac{1}{2}}d\mu\\
		%&\leq\frac{1}{|Q|}\int_{ Q}\Bigg(\iint_{\mathbb{R}_{+}^{n+1}}\left(\frac{t}{|z|+t}\right)^{n\lambda}\Big(\int_{{({\mathbb{R}^n})^2}\backslash\left(\Delta_{2}\right)^{2}}|K_{t}(\mu-z,\vec{y})-K_{t}(\mu_{0}-z,\vec{y})|\\
		%& \quad \times \prod_{i=1}^{2}|b_{i}-\lambda_{i}||f_{i}(y_{i})| dy_{1}dy_{2} \Big)^{2}  \frac{dzdt}{t^{n+1}}\Bigg)^{\frac{1}{2}}d\mu\\
		& \leq\frac{C}{|Q|}\int_{ Q}\sum_{k=1}^{\infty}\Bigg( \iint_{\mathbb{R}_{+}^{n+1}}\left(\frac{t}{|z|+t}\right)^{n\lambda}\Bigg(\int_{\left(\Delta_{k+2}\right)^{2}\backslash\left(\Delta_{k+1}\right)^{2}}|K_{t}(\mu-z,\vec{y})-K_{t}(\mu_{0}-z,\vec{y})|\\
		& \quad 	\times \prod_{i=1}^{2}|b_{i}(y_{i})-\lambda_{i}||f_{i}(y_{i})|dy_{1}dy_{2}\Bigg)^{2}\frac{dzdt}{t^{n+1}}    \Bigg)^{\frac{1}{2}}d\mu\\     
		& \leq\frac{C}{|Q|}\int_{ Q}\sum_{k=1}^{\infty}\Bigg( \iint_{\mathbb{R}_{+}^{n+1}}\left(\frac{t}{|z|+t}\right)^{n\lambda}\Big(\int_{\left(\Delta_{k+2}\right)^{2}\backslash\left(\Delta_{k+1}\right)^{2}}|K_{t}(\mu-z,\vec{y})\\
		& \quad 	-K_{t}(\mu_{0}-z,\vec{y})|^{q}d\vec{y}\Big)^{\frac{2}{q}}\Big(\int_{(2^kQ^*)^{2}}\big(\prod_{i=1}^{2}|b_{i}(y_{i})-\lambda_{i}||f_{i}(y_{i})|\big)^{q'}d\vec{y}\Big)^{\frac{2}{q'}}\frac{dzdt}{t^{n+1}}    \Bigg)^{\frac{1}{2}}d\mu\\     
		%& \leq\frac{C}{|Q|}\int_{ Q}\sum_{k=1}^{\infty}C_{k}2^{-\frac{2kn}{q'}}|\mu-\mu_{0}|^{-\frac{2n}{q'}}\left(1+2^{k}|\mu-\mu_{0}|\right)^{-N}\\
		%& \quad \times \Big(\int_{(2^kQ^*)^{2}}\big(\prod_{i=1}^{2}|b_{i}-\lambda_{i}||f_{i}(y_{i})|\big)^{q'}d\vec{y}\Big)^{\frac{1}{q'}} d\mu\\
		& \leq\frac{C}{|Q|}\int_{ Q}\sum_{k=1}^{\infty}C_{k}2^{-\frac{2kn}{q'}}|\mu-\mu_{0}|^{-\frac{2n}{q'}}\left(1+2^{k}|\mu-\mu_{0}|\right)^{-N}\\
		& \quad \times \left(\int_{2^kQ^*}\left(|b_{1}(y_{1})-\lambda_{1}||f_{1}(y_{1})|\right)^{q'}dy_{1}\right)^{\frac{1}{q'}}\left(\int_{2^kQ^*}|b_{2}(y_{2})-\lambda_{2}||f_{2}(y_{2})|^{q'}dy_{2}\right)^{\frac{1}{q'}}d\mu\\
		&\leq \frac{C}{|Q|}\int_Q \sum_{k=1}^{\infty} \frac{C_k |2^kQ^*|^{\frac{2}{q'}}}{(2^kr)^{\frac{2n}{q'}}(1+2^kr)^{N}}\left(\frac{1}{|2^kQ^*|}\int_{2^kQ^*}\left(|b_{1}(y_{1})-\lambda_{1}||f_{1}(y_{1})|\right)^{q'}dy_{1}\right)^{\frac{1}{q'}}\\
		& \quad \times \left(\frac{1}{|2^kQ^*|}\int_{2^kQ^*}|b_{2}(y_{2})-\lambda_{2}||f_{2}(y_{2})|^{q'}dy_{2}\right)^{\frac{1}{q'}}d\mu\\
		&\leq \frac{C}{|Q|}\int_Q \sum_{k=1}^{\infty} \frac{C_k |2^kQ^*|^{\frac{2}{q'}} \varphi(2^kQ^*)^{\frac{2\eta}{l}}}{(2^kr)^{\frac{2n}{q'}}(1+2^kr)^{N}}
		\prod_{i=1}^{2}\left(\frac1{\varphi(2^kQ^*)^{\eta}|2^kQ^*|}
		\int_{2^kQ^*}|f_i(y_i)|^{l} dy_i\right)^{\frac1{l}}\\
		& \quad \times 
		\prod_{j=1}^{2}\left( \frac1{|2^kQ^*|} \int_{2^kQ^*}|b_{j}(y_{j})-\lambda_{j}|^{q'h'} 
		dy_j\right)^{\frac1{q'h'}}d\mu \\
		&\leq C \sum_{k=1}^{\infty} \frac{C_k 
			|2^kQ^*|^{\frac{2}{q'}} \varphi(2^kQ^*)^{\frac{2\eta}{l}}}{(2^kr)^{\frac{2n}{q'}}(1+2^kr)^{N}} \mathcal{M}_{l,\varphi,\eta}(\vec f)(x) k^{2}\| b_1 \|_{BMO_\theta (\varphi)} \| b_2 \|_{BMO_\theta (\varphi)}\varphi(2^kQ^*)^{2\theta}  \\
		&\leq C\| b_1 \|_{BMO_\theta (\varphi)}\| b_2 \|_{BMO_\theta (\varphi)} \mathcal{M}_{l,\varphi,\eta}(\vec f)(x).
		\end {align*}
			\noindent{\it \rm \textbf{Case 2}:} $r\geq1$. 
		Let ${\widetilde Q}^*=8Q$. Splitting each $f_j$ as $f_j=f_j^0+f_j^\infty=f_j \chi_{{\widetilde Q}^*}+f_j \chi_{({\widetilde Q}^*)^{c}}$. Then
		\begin {align*}
		\prod_{j=1}^2f_j(y_j)
		&=\prod_{j=1}^2 f_j^{0}(y_j)+\sum_{(\alpha_1,\alpha_2)\in 
			\mathscr{L}}f_1^{\alpha_1}(y_1) f_2^{\alpha_2}(y_2),
		\end {align*}
		where  $\mathscr{L}:=\{(\alpha_1,\alpha_2)$: there is at least one $\alpha_j=\infty\}$. 
		
		For $\mu\in Q$ and $\widetilde\lambda_j:=(b_j)_{{\widetilde Q}^*}$, $j=1, 2$, we have
		\begin{align*}
			&\left(\frac1{\varphi(Q)^{\eta}|Q|}\int_Q \Big|g_{\lambda,\prod \vec b}^{*}(\vec 
			f)(\mu)\Big|^\delta  d\mu\right)^{\frac1\delta}\\
			&\leq \frac{C}{\varphi(Q)^{\eta/\delta}} \left(\frac{1}{|Q|}\int_{ Q}\Big|\left(b_1(\mu)-\widetilde\lambda_1\right)\left(b_2(\mu)-\widetilde\lambda_2\right)g_{\lambda}^{*}(f_1,f_2)(\mu)\Big|^{\delta}d\mu\right)^{\frac{1}{\delta}}\\
			&\quad +\frac{C}{\varphi(Q)^{\eta/\delta}} \left(\frac{1}{|Q|}\int_{ Q}\Big|\left(b_1(\mu)-\widetilde\lambda_1\right)g_{\lambda,b_{2}}^{*,2}(f_1,f_2)(\mu)\Big|^{\delta}d\mu\right)^{\frac{1}{\delta}}\\
			&\quad +\frac{C}{\varphi(Q)^{\eta/\delta}} \left(\frac{1}{|Q|}\int_{ Q}\Big|\left(b_2(\mu)-\widetilde\lambda_2\right)g_{\lambda,b_{1}}^{*,1}(f_1,f_2)(\mu)\Big|^{\delta}d\mu\right)^{\frac{1}{\delta}}\\
			&\quad +\frac{C}{\varphi(Q)^{\eta/\delta}} \left(\frac{1}{|Q|}\int_{ Q}\Big|g_{\lambda}^{*}\left((b_1-\widetilde\lambda_1)f_1^{0},(b_2-\widetilde\lambda_2)f_2^{0}\right)(\mu)\Big|^{\delta}d\mu\right)^{\frac{1}{\delta}}\\
			&\quad +\sum_{(\alpha_1,\alpha_2)\in\mathscr{L}}\frac{C}{\varphi(Q)^{\eta/\delta}} \left(\frac{1}{|Q|}\int_{ Q}\Big|g_{\lambda}^{*}\left((b_1-\widetilde\lambda_1)f_1^{\alpha_{1}},(b_2-\widetilde\lambda_2)f_2^{\alpha_2}\right)(\mu)\Big|^{\delta}d\mu\right)^{\frac{1}{\delta}}\\
			&:=\widetilde M_{1}+\widetilde M_{2}+\widetilde M_{3}+\widetilde M_{4}+\widetilde M_{5}\\
			&:=\widetilde M_{1}+\widetilde M_{2}+\widetilde M_{3}+\widetilde M_{4}+\sum_{(\alpha_1,\ldots,\alpha_m)\in\mathscr{L}}\widetilde M_{5 \alpha_1,\alpha_2}.
			\end {align*}
			
			Since $\eta>\frac{2\theta}{\frac{1}{\delta}-\frac{1}{\varepsilon}}$, $\widetilde M_{i}$  have the same estimation method of $ M_{i}$, $i=1,2,3,4$. We only estimate $\widetilde M_{5}$. It is easy to get that $ {\textstyle \sum_{j=1}^{2}} 
			|\mu-y_j|\sim 2^{k}r$ for $\mu\in Q$ and $(y_1,y_2)\in (2^{k+3}Q)^2 \setminus (2^{k+2}Q)^2$. Take $N >2\eta/l+2\theta$, by H\"{o}lder's inqualitiy, Minkowski's inequality and the size condition \eqref{1.1}, so
			\begin {align*}	
			\widetilde M_{5 \alpha_1,\alpha_2}
			%&\leq \frac C{\varphi(Q)^{\eta/\delta}{|Q|}}\int_Q| g_{\lambda}^{*}\left(\left(\widetilde\lambda_{1}-b_{1}\right)f_{1}^{\alpha_1}, f_{2}^{\alpha_2}\right)(\mu)|d\mu\\
			&\leq\frac C{\varphi(Q)^{\eta/\delta}{|Q|}}\int_{ Q}\Bigg(\iint_{\mathbb{R}_{+}^{n+1}}\left(\frac{t}{|\mu-z|+t}\right)^{n\lambda}\Big(\int_{{({\mathbb{R}^n})^2}\backslash\left(\widetilde Q^{*}\right)^{2}}\left|K_{t}(z,y_{1},y_{2})\right|\\
			&\quad \times\prod_{i=1}^{2}|b_{i}(y_{i})-\widetilde\lambda_{i}|\left|f_{i}(y_{i})\right|dy_{1}dy_{2} \Big)^{2} \frac{dzdt}{t^{n+1}}\Bigg)^{\frac{1}{2}}d\mu\\
			&\leq\frac C{\varphi(Q)^{\eta/\delta}{|Q|}}\int_{ Q}\int_{{({\mathbb{R}^n})^2}\backslash\left(\widetilde Q^{*}\right)^{2}}\Bigg(\iint_{\mathbb{R}_{+}^{n+1}}\left(\frac{t}{|\mu-z|+t}\right)^{n\lambda}|K_{t}(z,y_{1},y_{2})|^{2}\frac{dzdt}{t^{n+1}}\Bigg)^{\frac{1}{2}}\\
			&\quad \times\prod_{i=1}^{2}|b_{i}(y_{i})-\widetilde\lambda_{i}|\left|f_{i}(y_{i})\right|dy_{1}dy_{2} d\mu\\  
			&\leq \frac C{\varphi(Q)^{\eta/\delta}{|Q|}}\int_{ Q}\sum_{k=1}^{\infty}\int_{\left(2^{k+3}Q\right)^{2}\backslash\left(2^{k+2}Q\right)^{2}}\frac{\prod_{i=1}^{2}|b_{i}(y_{i})-\widetilde\lambda_{i}|\left|f_{i}(y_{i})\right|}{\left(\sum_{j=1}^{2}|\mu-y_{j}|\right)^{2n}\left(1+\sum_{j=1}^{2}|\mu-y_{j}|\right)^{N}}dy_{1}dy_{2}d\mu\\
			&\leq \frac C{\varphi(Q)^{\eta/\delta}{|Q|}}\int_{ Q}\sum_{k=1}^{\infty}\frac{|2^{k+3}Q|^{2}}{(2^{k}r)^{2n}(1+2^{k}r)^{N}}\prod_{i=1}^{2}\left(\frac{1}{|2^{k+3}Q|}\int_{2^{k+3}Q}|b_{i}(y_{i})-\widetilde\lambda_{i}|\left|f_{i}(y_{i})\right|dy_{i}\right) d\mu\\
			%&\leq \frac C{\varphi(Q)^{\eta/\delta}{|Q|}}\int_{ Q}\sum_{k=1}^{\infty}\frac{|2^{k+3}Q|^{2}}{(2^{k}r)^{mn}(1+2^{k}r)^{N}}\prod_{i=1}^{2}\left(\frac{1}{|2^{k+3}Q|}\int_{2^{k+3}Q }|f_{i}(y_{i})|^{l}dy_{i}\right)^{\frac{1}{l}}\\	
			%&\quad \times\prod_{j=1}^{2}\left(\frac{1}{|2^{k+3}Q|}\int_{2^{k+3}Q}\left|\widetilde\lambda_{j}-b_{j}\right|^{l'}dy_{j}\right)^{\frac{1}{l'}}d\mu\\ 
			&\leq \frac C{\varphi(Q)^{\eta/\delta}}\sum_{k=1}^{\infty}\frac{|2^{k+3}Q|^{2}\varphi(2^{k+3}Q)^{\frac{2\eta}{l}}}{(2^{k}r)^{2n}(1+2^{k}r)^{N}}\prod_{i=1}^{2}\left(\frac{1}{\varphi(2^{k+3}Q)^{\eta}|2^{k+3}Q|}\int_{2^{k+3}Q }|f_{i}(y_{i})|^{l}dy_{i}\right)^{\frac{1}{l}}\\	
			&\quad \times\prod_{j=1}^{2}\left(\frac{1}{|2^{k+3}Q|}\int_{2^{k+3}Q}\left|\widetilde\lambda_{j}-b_{j}(y_{j})\right|^{l'}dy_{j}\right)^{\frac{1}{l'}}\\ 
			&\leq \frac C{\varphi(Q)^{\eta/\delta}}\sum_{k=1}^{\infty}\frac{|2^{k+3}Q|^{2}\varphi(2^{k+3}Q)^{\frac{2\eta}{l}}}{(2^{k}r)^{2n}(1+2^{k}r)^{N}}\mathcal{M}_{l,\varphi,\eta}(\vec f)(x)\\
			&\quad \times k^{2}\| b_1 \|_{BMO_\theta (\varphi)}\| b_2 \|_{BMO_\theta (\varphi)}\varphi(2^{k+3}Q)^{2\theta}\\
			&\leq C\sum_{k=1}^{\infty}k^{2}\left(1+2^{k}r\right)^{\frac{2\eta}{l}-N+2\theta}\| b_1 \|_{BMO_\theta (\varphi)}\| b_2 \|_{BMO_\theta (\varphi)}\mathcal{M}_{l,\varphi,\eta}(\vec 
			f)(x)\\
			&\leq C\sum_{k=1}^{\infty}k^{2}\left(2^{k}\right)^{\frac{2\eta}{l}-N+2\theta}\| b_1 \|_{BMO_\theta (\varphi)}\| b_2 \|_{BMO_\theta (\varphi)}\mathcal{M}_{l,\varphi,\eta}(\vec 
			f)(x)\\
			&\leq C\| b_1 \|_{BMO_\theta (\varphi)}\| b_2 \|_{BMO_\theta (\varphi)}
			\mathcal{M}_{l,\varphi,\eta}(\vec f)(x).
			\end {align*}
 
 In conclusion, we have
 \begin{align*} 
 	&M_{\delta,\varphi,\eta}^{\sharp,\triangle}(g_{\lambda,\prod \vec b}^{*}(\vec f))(x)\\
 	&\leq C \prod_{j=1}^{2}\| b_j \|_{BMO_{\theta_{j}}(\varphi)} \left(M_{\varepsilon,\varphi,\eta}^\triangle (g_{\lambda}^{*}(\vec f))(x)+\mathcal{M}_{l,\varphi,\eta}(\vec f)(x)\right)\\
 	&\quad 
 	+C\| b_{1}\|_{BMO_{\theta}(\varphi)}M_{\varepsilon,\varphi,\eta}^\triangle (g_{\lambda,b_{2}}^{*,2}(f_{1},f_{2}))(x)+C\| b_{2}\|_{BMO_{\theta}(\varphi)}M_{\varepsilon,\varphi,\eta}^\triangle (g_{\lambda,b_{1}}^{*,1}(f_{1},f_{2}))(x).
 \end{align*}
 
 The proof of Theorem \ref{Theorem4.1} is finished.
\end{proof}
\subsection{Boundedness of Multilinear Iterative Commutator on weighted Leb-esgue Spaces}
\quad\quad In this subsection, with the help of Theorem \ref{Theorem4.1}, we obtain the boundedness of multilinear iterative commutator on weighted Lebesgue spaces.
\begin{theorem}\label{Theorem4.2} 
Suppose that $m\geq2$, $g_{\lambda}^{*}$ is the new multilinear Littlewood--Paley function with generalized kernel as in Definition \ref {definition1.1}. Let ${\textstyle \sum_{k=1}^{\infty }}  k^m C_k<\infty$, $\vec{\theta}=(\theta_1,\ldots,\theta_m) $, $ \theta_j\geq 0 $, $ j=1,\ldots,m $, $\vec{p}=(p_1,\ldots,p_m)$, $1/p=1/p_1+\cdots+1/p_m$, $\vec \omega=(\omega_1,\ldots,\omega_m)\in A_{\vec{p}/q'}^\infty(\varphi)$, $v_{\vec\omega}= {\textstyle \prod_{j=1}^{m}} \omega_j^{p/p_j}$ and $\vec{b}\in BMO_{\vec\theta}^m(\varphi)$.
If $q'<p_j<\infty$, $j=1,\ldots,m$, then there exists a constant $C>0$ such that
\begin{align*}
	\| g_{\lambda,\prod \vec b}^{*}(\vec f)\|_{L^{p}(v_{\vec\omega})}\le C\prod_{j=1}^{m} \|	b_j \|_{BMO_{\theta_j}(\varphi)}\prod_{j=1}^{m}\| f_j \|_{L^{p_j}(\omega_j)}.\nonumber
\end{align*}
\end{theorem}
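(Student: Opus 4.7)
The plan is to mimic the argument of Theorem \ref{Theorem3.2}, but using the sharp maximal pointwise bound from Theorem \ref{Theorem4.1} in place of Theorem \ref{Theorem3.1}, and handling the extra lower-order cross terms via induction on the number of $BMO$ factors.

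First I would choose parameters. By Lemma \ref{Lemma3.2} applied to $\vec\omega \in A_{\vec p/q'}^\infty(\varphi)$ there exists $k>1$ with $\vec\omega \in A_{\vec p/(q'k)}^\infty(\varphi)$ and $p_j/(q'k)>1$ for each $j$; set $l:=q'k>q'$, and take $\eta_0$ from Lemma \ref{Lemma2.5} for this $l$. Lemma \ref{Lemma2.2} and Lemma \ref{Lemma2.1} then give $v_{\vec\omega}\in A_{mp/l}^\infty(\varphi)\subset A_{p/\delta}^\infty(\varphi)$ once $\delta$ is small. I fix $0<\delta<\varepsilon<1/m$ so that, in addition, $\eta_0>\bigl(\sum_{j=1}^m \theta_j\bigr)/(1/\delta-1/\varepsilon)$, which is exactly the range required by Theorem \ref{Theorem4.1}.

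Next, since $p/\delta>1$ and $v_{\vec\omega}\in A_{p/\delta}^\infty(\varphi)$, Lemma \ref{Lemma2.3} passes us to the dyadic sharp maximal function,
\[
\| g_{\lambda,\prod \vec b}^{*}(\vec f)\|_{L^p(v_{\vec\omega})} \le C \bigl\| M_{\delta,\varphi,\eta_0}^{\sharp,\triangle}(g_{\lambda,\prod \vec b}^{*}(\vec f)) \bigr\|_{L^p(v_{\vec\omega})},
\]
and Theorem \ref{Theorem4.1} then dominates the integrand pointwise by the ``leading'' term $\prod_{j=1}^m \|b_j\|_{BMO_{\theta_j}(\varphi)}\bigl(M_{\varepsilon,\varphi,\eta_0}^\triangle(g_\lambda^*(\vec f))+\mathcal{M}_{l,\varphi,\eta_0}(\vec f)\bigr)$ plus cross terms of the form $\prod_{i=1}^{j}\|b_{\xi(i)}\|_{BMO_{\theta_{\xi(i)}}(\varphi)}\,M_{\varepsilon,\varphi,\eta_0}^\triangle(g_{\lambda,\prod \vec b_{\xi'}}^{*}(\vec f))$ for $\xi \in C_j^m$, $1\le j\le m-1$.

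Each piece is now estimated in $L^p(v_{\vec\omega})$. The leading bracket is treated exactly as in Theorem \ref{Theorem3.2}: a second use of Lemma \ref{Lemma2.3} (with $\varepsilon$ in place of $\delta$) together with Theorem \ref{Theorem 2.1} passes the $M_{\varepsilon,\varphi,\eta_0}^\triangle(g_\lambda^*(\vec f))$ term to $\mathcal{M}_{q',\varphi,\eta_0}(\vec f)\le \mathcal{M}_{l,\varphi,\eta_0}(\vec f)$, and Lemma \ref{Lemma2.5} bounds $\|\mathcal{M}_{l,\varphi,\eta_0}(\vec f)\|_{L^p(v_{\vec\omega})}$ by $\prod_j \|f_j\|_{L^{p_j}(\omega_j)}$. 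For each cross term, Lemma \ref{Lemma2.3} again reduces the $M_{\varepsilon,\varphi,\eta_0}^\triangle$ factor to the $L^p(v_{\vec\omega})$-norm of the partial iterated commutator $g_{\lambda,\prod \vec b_{\xi'}}^{*}(\vec f)$, which carries strictly fewer than $m$ $BMO$ factors and is therefore controlled by the inductive hypothesis, the base case $|\vec b_{\xi'}|=0$ being Theorem \ref{Theorem 2.2}.

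The main obstacle will be organising the induction cleanly. Theorem \ref{Theorem4.1} is stated only for the full $m$-fold iterated commutator, yet the argument needs to be applied to every partial iterated commutator $g_{\lambda,\prod \vec b_{\xi'}}^{*}$. One therefore has to observe that the proof of Theorem \ref{Theorem4.1} is oblivious to the total number of $BMO$ slots and, with purely notational changes (and with $\sum_{k=1}^{\infty}k^{|\xi'|}C_{k}<\infty$, which is ensured by the hypothesis $\sum_{k=1}^{\infty}k^{m}C_{k}<\infty$), furnishes an analogous sharp maximal estimate for each $|\xi'|$-fold iterated commutator, with its own lower-order tail indexed by subsets of $\xi'$. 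Once this bookkeeping is in place, strong induction on $|\vec b|$ closes the estimate and the constants collect into the advertised $\prod_{j=1}^m \|b_j\|_{BMO_{\theta_j}(\varphi)}\prod_{j=1}^m \|f_j\|_{L^{p_j}(\omega_j)}$.
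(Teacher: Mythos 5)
Your parameter selection, the first passage via Lemma \ref{Lemma2.3} to $M_{\delta,\varphi,\eta_0}^{\sharp,\triangle}$, the application of Theorem \ref{Theorem4.1}, and the observation that the sharp maximal estimate must also be available for each partial iterated commutator $g_{\lambda,\prod \vec b_{\xi'}}^{*}$ all match the paper. The gap is in how you close the recursion on the cross terms. You claim that ``Lemma \ref{Lemma2.3} again reduces the $M_{\varepsilon,\varphi,\eta_0}^\triangle$ factor to the $L^p(v_{\vec\omega})$-norm of the partial iterated commutator,'' but Lemma \ref{Lemma2.3} only gives $\Vert f\Vert_{L^p(\omega)}\leq\Vert M_{\varphi,\eta}^\triangle f\Vert_{L^p(\omega)}\leq C\Vert M_{\varphi,\eta}^{\sharp,\triangle} f\Vert_{L^p(\omega)}$ — it lets you pass \emph{up} from a function to its (sharp) maximal function, never back down. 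No lemma in the paper asserts $\Vert M_{\varepsilon,\varphi,\eta_0}^\triangle h\Vert_{L^p(v_{\vec\omega})}\lesssim\Vert h\Vert_{L^p(v_{\vec\omega})}$, and such a bound is false in this generality: $v_{\vec\omega}$ is only in $A_{mp/l}^\infty(\varphi)$, a class strictly larger than the Muckenhoupt class, and weights like $1+|x|^\gamma$ with large $\gamma$ lie in $A_p^\infty(\varphi)\setminus A_p$, for which the (dyadic) Hardy--Littlewood maximal operator is not $L^p(\omega)$-bounded. So your induction on the number of $BMO$ factors, with base case Theorem \ref{Theorem 2.2}, cannot be closed by this step.

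The paper avoids this by never descending to the bare partial commutator: after Lemma \ref{Lemma2.3} turns $M_{\beta_1,\varphi,\eta_0}^\triangle\bigl(g_{\lambda,\prod\vec b_{\xi'}}^{*}(\vec f)\bigr)$ into $M_{\beta_1,\varphi,\eta_0}^{\sharp,\triangle}\bigl(g_{\lambda,\prod\vec b_{\xi'}}^{*}(\vec f)\bigr)$, it applies the Theorem \ref{Theorem4.1}-type estimate \emph{again} to this sharp maximal function, and keeps iterating until only $\mathcal{M}_{l,\varphi,\eta_0}(\vec f)$ and $M_{\beta_m,\varphi,\eta_0}^\triangle(g_{\lambda}^{*}(\vec f))$ remain, the latter handled by Theorem \ref{Theorem 2.1} and Lemma \ref{Lemma2.5}. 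This repetition is exactly why the paper constructs a whole chain of exponents $0<\delta<\beta_1<\cdots<\beta_m<1/m$ with $\eta_0>\bigl(\sum_{j=1}^m\theta_j\bigr)/(1/\beta_j-1/\beta_{j+1})$ at every stage: each application of the sharp maximal theorem consumes one exponent pair, so your single pair $(\delta,\varepsilon)$ is not enough even if you switch to this route. With the exponent chain and the repeated sharp-maximal applications in place (plus the notational extension of Theorem \ref{Theorem4.1} to $|\xi'|$-fold commutators that you already flagged), the argument closes as in the paper.
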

\begin{proof}
Take $\eta_0$ and $l$ the same as in the proof of Theorem \ref{Lemma3.2}. Select $\delta,\beta_{1},\ldots,\beta_{m}$ such that $0<\beta_{m}<1/m$, $0<\beta_{j}<\frac{1}{\frac{\sum_{j=1}^{m}\theta_{j}}{\eta_0}+\frac{1}{\beta_{j+1}}}$, $j=1,\ldots,m-1$ and $0<\delta<\frac{1}{\frac{\sum_{j=1}^{m}\theta_{j}}{\eta_0}+\frac{1}{\beta_{1}}}$, then $\eta_0>\frac{\sum_{j=1}^{m}\theta_{j}}{\frac{1}{\delta}-\frac{1}{\beta_{1}}}$ and $\eta_0>\frac{\sum_{j=1}^{m}\theta_{j}}{\frac{1}{\beta_{j}}-\frac{1}{\beta_{j+1}}}$, $j=1,\ldots,m-1$. Applying Theorem \ref{Theorem4.1}, we have
\begin{align*} 
	&\|M_{\delta,\varphi,\eta_{0}}^{\sharp,\triangle}(g_{\lambda,\prod \vec b}^{*}(\vec f))\|_{L^{p}(v_{\vec{\omega}})}\\
	&\leq C \prod_{j=1}^{m}\| b_j \|_{BMO_{\theta_{j}}(\varphi)} \left(\|M_{\beta_{1},\varphi,\eta_{0}}^\triangle (g_{\lambda}^{*}(\vec f))\|_{L^{p}(v_{\vec{\omega}})}+\|\mathcal{M}_{l,\varphi,\eta_{0}}(\vec f)\|_{L^{p}(v_{\vec{\omega}})}\right)\\
	&\quad
	+C\sum_{j=1}^{m-1}\sum_{\xi \in C_{j}^{m}}\prod_{i=1}^{m}\| b_{\xi (i)}\|_{BMO_{\theta_{\xi(i)}}(\varphi)}\|M_{\beta_{1},\varphi,\eta_{0}}^\triangle (g_{\lambda,\prod \vec b_{\xi'}}^{*}(\vec f))\|_{L^{p}(v_{\vec{\omega}})}.
\end{align*}

Acoording to Definition \ref{Definition4.1}, $\xi=\left\{\xi(1),\ldots,\xi(j)\right\}$, $\xi'=\left\{\xi(j+1),\ldots,\xi(m)\right\}$ and $A_{h}=\left\{\xi_{1}:\xi_{1}\subset\xi'\right\}$, where $\xi_{1}$ is any finite subset of $\xi'$ composed of different elements. Denote $\xi_{1}'=\xi'-\xi_{1}$.

Repeated application of the Theorem \ref{Theorem4.1} can obtain
\begin{align*}
	&\|M_{\beta_{1},\varphi,\eta_{0}}^{\sharp,\triangle}(g_{\lambda,\prod \vec b_{\xi'}}^{*}(\vec f))\|_{L^{p}(v_{\vec{\omega}})}\\
	&\leq C \prod_{k=j+1}^{m}\| b_{\xi_{(k)}} \|_{BMO_{\theta_{\xi_{(k)}}}(\varphi)} \left(\|M_{\beta_{2},\varphi,\eta_{0}}^\triangle (g_{\lambda}^{*}(\vec f))\|_{L^{p}(v_{\vec{\omega}})}+\|\mathcal{M}_{l,\varphi,\eta_{0}}(\vec f)\|_{L^{p}(v_{\vec{\omega}})}\right)\\
	&\quad
	+C\sum_{h=1}^{m-j-1}\sum_{\xi_{1} \in A_{h}}\prod_{i=1}^{h}\| b_{\xi_{1} (i)}\|_{BMO_{\theta_{\xi_{1}(i)}}(\varphi)}\|M_{\beta_{2},\varphi,\eta_{0}}^\triangle (g_{\lambda,\prod \vec b_{\xi_{1}'}}^{*}(\vec f))\|_{L^{p}(v_{\vec{\omega}})}.
\end{align*}

By Lemma \ref{Lemma2.3} and Theorem \ref{Theorem3.1}, we continue in this way to obtain	
\begin{align*} 
	&\|M_{\delta,\varphi,\eta_{0}}^{\sharp,\triangle}(g_{\lambda,\prod \vec b}^{*}(\vec f))\|_{L^{p}(v_{\vec{\omega}})}\\
	&\leq C \prod_{j=1}^{m}\| b_j \|_{BMO_{\theta_{j}}(\varphi)} \Big(C_{m+1}(m,n)\|\mathcal{M}_{l,\varphi,\eta_{0}}(\vec f)\|_{L^{p}(v_{\vec{\omega}})}+C_{1}(m,n)\|M_{\beta_{1},\varphi,\eta_{0}}^\triangle (g_{\lambda}^{*}(\vec f))\|_{L^{p}(v_{\vec{\omega}})}\\
	&\quad +C_{2}(m,n)\|M_{\beta_{2},\varphi,\eta_{0}}^\triangle (g_{\lambda}^{*}(\vec f))\|_{L^{p}(v_{\vec{\omega}})}+\ldots+C_{m}(m,n)\|M_{\beta_{m},\varphi,\eta_{0}}^\triangle (g_{\lambda}^{*}(\vec f))\|_{L^{p}(v_{\vec{\omega}})}\Big),
\end{align*}
where $C_{1}(m,n), C_{2}(m,n),\ldots, C_{m}(m,n), C_{m+1}(m,n)$ are all finite real numbers associated with $m$ and $n$.

It follows froms Lemma \ref{Lemma2.3}, Theorem \ref{Theorem 2.1} and Lemma \ref{Lemma2.5}, we have
\begin{align*} 
	&\| g_{\lambda,\prod \vec b}^{*}(\vec f)\|_{L^{p}(v_{\vec\omega})}\leq \|M_{\delta,\varphi,\eta_{0}}^{\triangle}(g_{\lambda,\prod \vec b}^{*}(\vec f))\|_{L^{p}(v_{\vec{\omega}})}\leq C\|M_{\delta,\varphi,\eta_{0}}^{\sharp,\triangle}(g_{\lambda,\prod \vec b}^{*}(\vec f))\|_{L^{p}(v_{\vec{\omega}})}\\
	&\leq C \prod_{j=1}^{m}\| b_j \|_{BMO_{\theta_{j}}(\varphi)} \Big(C_{m+1}(m,n)\|\mathcal{M}_{l,\varphi,\eta_{0}}(\vec f)\|_{L^{p}(v_{\vec{\omega}})}+C_{1}(m,n)\|M_{\beta_{1},\varphi,\eta_{0}}^\triangle (g_{\lambda}^{*}(\vec f))\|_{L^{p}(v_{\vec{\omega}})}\\
	&\quad +C_{2}(m,n)\|M_{\beta_{2},\varphi,\eta_{0}}^\triangle (g_{\lambda}^{*}(\vec f))\|_{L^{p}(v_{\vec{\omega}})}+\ldots+C_{m}(m,n)\|M_{\beta_{m},\varphi,\eta_{0}}^\triangle (g_{\lambda}^{*}(\vec f))\|_{L^{p}(v_{\vec{\omega}})}\Big)\\
	&\leq  C \prod_{j=1}^{m}\| b_j \|_{BMO_{\theta_{j}}(\varphi)}\Big(C_{m+1}(m,n)\|\mathcal{M}_{l,\varphi,\eta_{0}}(\vec f)\|_{L^{p}(v_{\vec{\omega}})}+C_{m+2}(m,n)\|M_{\beta_{m},\varphi,\eta_{0}}^\triangle (g_{\lambda}^{*}(\vec f))\|_{L^{p}(v_{\vec{\omega}})}\Big)\\
	&\leq  C \prod_{j=1}^{m}\| b_j \|_{BMO_{\theta_{j}}(\varphi)}\Big(C_{m+1}(m,n)\|\mathcal{M}_{l,\varphi,\eta_{0}}(\vec f)\|_{L^{p}(v_{\vec{\omega}})}+C_{m+2}(m,n)\|M_{\beta_{m},\varphi,\eta_{0}}^{\sharp,\triangle} (g_{\lambda}^{*}(\vec f))\|_{L^{p}(v_{\vec{\omega}})}\Big)\\
	&\leq  C \prod_{j=1}^{m}\| b_j \|_{BMO_{\theta_{j}}(\varphi)}\Big(C_{m+1}(m,n)\|\mathcal{M}_{l,\varphi,\eta_{0}}(\vec f)\|_{L^{p}(v_{\vec{\omega}})}+C_{m+2}(m,n)\|M_{q',\varphi,\eta_{0}} (\vec f)\|_{L^{p}(v_{\vec{\omega}})}\Big)\\
	&\leq  C \prod_{j=1}^{m}\| b_j \|_{BMO_{\theta_{j}}(\varphi)}\|\mathcal{M}_{l,\varphi,\eta_{0}}(\vec f)\|_{L^{p}(v_{\vec{\omega}})}= C \prod_{j=1}^{m}\|_{BMO_{\theta_{j}}(\varphi)}\|\mathcal{M}_{\varphi,\eta_{0}}(|\vec f|)^{l}\|_{L^{p/l}(v_{\vec{\omega}})}^{1/l}\\
	&\leq C\prod_{j=1}^{m} \|	b_j \|_{BMO_{\theta_j}(\varphi)}\prod_{j=1}^{m}\| f_j \|_{L^{p_j}(\omega_j)}.
\end{align*}

The proof of Theorem \ref{Theorem4.2} is finished.
\end{proof}			
\section*{Statements and Declarations}
\textbf{Confict of interest} No potential confict of interests was reported by the authors.\\
\textbf{Data availability} The manuscript has no associated data.\\
\textbf{Compliance with Ethical Standards }No research involving human participants and
animals.
\section*{Author contributions}
Huimin Sun and Shuhui Yang were responsible for the proof of manuscript Theorems \ref{Theorem 2.1}, \ref{Theorem 2.2}, \ref{Theorem3.1}, \ref{Theorem3.2} and the drafting of the article. Yan Lin was responsible for the proof of the manuscript Theorems \ref{Theorem4.1}, \ref{Theorem4.2}, and made changes to the grammar of the article. All authors reviewed the manuscript.
\section*{References}
\begin{enumerate}
	\setlength{\itemsep}{-2pt}
	\bibitem[1]{BHS2011}B. Bongioanni, E. Harboure and O. Salinas, Commutators of {R}iesz transforms related to {S}chr\"{o}dinger
	operators, J. Fourier Anal. Appl. 17 (2011), 115-134.
	\bibitem[2]{B2015}T. Bui, New class of multiple weights and new weighted inequalities for multilinear operators, Forum Math. 27 (2015), 995-1023.
	\bibitem[3]{bCX2021}M. Cao and Q. Xue, Multilinear Littlewood--Paley--Stein operators on non-homogeneous spaces, J. Geom. Anal. 31 (2021), 9295-9337.
	\bibitem[4]{C2023}X. Cen, The multilinear Littlewood--Paley square operators and their commutators on weight-
	ed Morrey spaces. Indian J. Pure Appl. Math. 2023, 1-27.
	\bibitem[5]{CM1975}R. R. Coifman and Y. Meyer, On commutators of singular integrals and bilinear singular integrals, Trans. Amer. Math. Soc.
	212 (1975), 315-331.
	\bibitem[6]{1CM1978}R. R. Coifman and Y. Meyer, Commutateurs dint\'{e}grales singuli\`eres et op\'{e}rateurs
	multilin\'{e}aires, Ann. Inst. Fourier (Grenoble). 28 (1978), 177-202.
	\bibitem[7]{2CM1978}R. R. Coifman and Y. Meyer, Au del\`a des op\'{e}rateurs pseudo-diff\'{e}rentiels, Soci\'{e}t\'{e} Math\'{e}mat-
	ique de France, Paris. 57 (1978), 1-185.
	\bibitem[8]{CMP2004}D. Cruz-Uribe, J. M. Martell and C. P\'{e}rez,   Extrapolation from $A_{\infty}$ weights and applications, J. Funct. Anal. 213 (2004), 412-439.
	\bibitem[9]{DJ1984}G. David and J. L. Journ\'{e}, A boundedness criterion for generalized {C}alder\'{o}n--{Z}ygmund operators. Ann. of Math. 120 (1984), 371-397.
	\bibitem[10]{DLY2002}Y. Ding, S. Lu and K. Yabuta, A problem on rough parametric Marcinkiewicz functions. J. Aust. Math. Soc. 72 (2002), 13-22.
	\bibitem[11]{D2000}J. Duoandikoetxea, Fourier analysis, in: Graduate Studies in Mathematics, Vol. 29, American Mathematical Society, Providence, RI, 2000.
	\bibitem[12]{F1970}C. Fefferman, Inequalities for strongly singular convolution operators, Acta Math. 124 (1970), 9-36.
	\bibitem[13]{G1983}J. Garc\'{\i}a-Cuerva, An extrapolation theorem in the theory of {$A\sb{p}$} weights, Proc. Amer. Math. Soc. 87(1983), 422-426.
	\bibitem[14]{GR1985}J. Garc\'{\i}a--Cuerva and J. Rubio de Francia , Weighted norm inequalities and related topics. North-Holland Math. Stud. 116 (1985).
	\bibitem[15]{1GT2002}L. Grafakos and R. H. Torres, On multilinear singular integrals of {C}alder\'{o}n--{Z}ygmund type, Publ. Mat. (2002), 57-91.
	\bibitem[16]{2GT2002}L. Grafakos and R. H. Torres, Multilinear {C}alder\'{o}n--{Z}ygmund theory, Adv. Math. 165 (2002),  124-164.
	\bibitem[17]{3GT2002}L. Grafakos and R. H. Torres, Maximal operator and weighted norm inequalities for multilinear singular integrals. Indiana Univ. Math. J. 51 (2002), 1261-1276. 
	\bibitem[18]{GZ2019}Q. Guo and J. Zhou, Compactness of commutators of pseudo-differential operators with smooth symbols on weighted Lebesgue spaces, J. Pseudo-Differ. Oper. Appl. 10 (2019), 557-569.
	\bibitem[19]{HZ2023}L. Hang and J. Zhou, Weighted norm inequalities for {C}alder\'{o}n-{Z}ygmund operators of {$\phi$}-type and their commutators, J. Contemp. Math. Anal. 58 (2023), 152-166. 
	\bibitem[20]{HZ2018}X. Hu and J. Zhou,  Pseudodifferential operators with smooth symbols and their commutators on weighted Morrey spaces, J. Pseudo-Differ. Oper. Appl. 9 (2018), 215-227.
	\bibitem[21]{J1986}B. Jawerth, Weighted inequalities for maximal operators: linearization, localization and factorization, Amer. J. Math. 108 (1986) 361-414.
	\bibitem[22]{LOP2009}A. Lerner, S. Ombrosi and C. P\'erez, et al, New maximal functions and multiple weights for the multilinear {C}alder\'{o}n--{Z}ygmund theory, Adv. Math.  220(2009), 1222-1264.
	\bibitem[23]{LYL}C. Li, S. Yang and Y. Lin, Multilinear square operators meet new weight functions. arXiv: 2303.11704.
	\bibitem[24]{YLL}C. Li, S. Yang and Y. Lin, Multilinear square operators associated with new $BMO$ functions and new weight functions. arXiv: 2402.15766.
	\bibitem[25]{LHY2020}X. Li, Q. He and D. Yan, Boundedness of multilinear Littlewood--Paley operators on Amalgam-Campanato spaces, Acta Math. Sci. Ser. B (Engl. Ed.). 40 (2020), 272-292.
	\bibitem[26]{LX2018}Z. Li and Q. Xue, Boundedness of bi-parameter Littlewood--Paley operators on product Hardy space, Rev. Mat. Complut. 31 (2018), 713-745.
	\bibitem[27]{L2011}H. Lin, E. Nakai and D. Yang, Boundedness of Lusin-area and $g_{\lambda}^{*}$ functions on localized $BMO$ spaces over doubling metric measure spaces. Bull. Sci. Math. 135 (2011), 59-88.
	\bibitem[28]{LZCZ2016}Y. Lin, Z. Liu, C. Xu and Z. Ren, Weighted estimates for Toeplitz operators related to pseudodifferential operators, J. Funct. Spaces. Art. ID 1084859, 19 pp. (2016).
	\bibitem[29]{LZ2014}G. Lu and P. Zhang, Multilinear {C}alder\'{o}n--{Z}ygmund operators with kernels of Dini's type and applications. Nonlinear Anal. 107 (2014), 92-117.
	\bibitem[30]{M1972}B. Muckenhoupt, Weighted norm inequalities for the Hardy maximal function, Trans. Amer. Math. Soc. 165 (1972), 207-226.
	\bibitem[31]{PT2015}G. Pan and L. Tang, New weighted norm inequalities for certain classes of multilinear operators and their iterated commutators, Potential Anal. 43 (2015), 371-398.
	\bibitem[32]{R1984}J. Rubio de Francia, Factorization theory and $A_{p}$ weights, Amer. J. Math. 106 (1984), 533-547.
	\bibitem[33]{SXY2014}S. Shi, Q. Xue and K. Yabuta, On the boundedness of multilinear Littlewood--Paley $g_{\lambda}^{*}$ function, J. Math. Pures Appl. 101 (2014), 394-413.
	\bibitem[34]{S1961}E. M. Stein, On some function of Littlewood--Paley and Zygmund, Bull. Amer. Math. Soc. 67 (1961), 99-101.
	\bibitem[35]{T2012}L. Tang, Weighted norm inequalities for pseudo-differential operators with smooth symbols and their commutators, J. Funct. Anal. 262 (2012),  1603-1629.
	\bibitem[36]{T2014}L. Tang, Extrapolation from $A_{\infty}^{\rho,\infty}$, vector-valued inequalities and applications in the {S}chr\"{o}din-ger settings, Ark. Mat. 52 (2014), 175-202.
   \bibitem[37]{XDY2007}Q. Xue, Y. Ding and K. Yabuta, Weighted estimate for a class of Littlewood--Paley operators. Taiwanese J. Math. 11 (2007), 339-365.
   	\bibitem[38]{XY2015}Q. Xue and J. Yan, On multilinear square function and its applications to multilinear Littlewood--Paley operators with non-convolution type kernels, J. Math. Anal. Appl. 422 (2015), 1342-1362.
	\bibitem[39]{Y1985}K. Yabuta, Generalizations of {C}alder\'{o}n--{Z}ygmund operators. Stud. Math. 82 (1985), 17-31. 
	\bibitem[40]{ZS2019}P. Zhang and J. Sun, Commutators of multilinear {C}alder\'{o}n--{Z}ygmund operators with kernels of Dini’s
	type and applications. J. Math. Inequal. 13 (2019), 1071-1093.
	\bibitem[41]{1ZZ2021}N. Zhao and J. Zhou, New weighted norm inequalities for certain classes of multilinear operators on Morrey-type spaces, Acta Math. Sin. (Engl. Ser.). 37 (2021), 911-925.
	\bibitem[42]{2ZZ2021}Y. Zhao and J. Zhou, New weighted norm inequalities for multilinear {C}alder\'{o}n--{Z}ygmund operators with kernels of Dini’s type and their commutators, J. Inequal. Appl. 29 (2021), 1-23.

\end{enumerate}
\bigskip

\noindent  

\smallskip

\noindent School of Science, China University of Mining and
Technology, Beijing 100083,  People's Republic of China

\smallskip

\noindent{\it E-mails:} \texttt{sunhuimin@student.cumtb.edu.cn} (H. Sun)

\noindent\phantom{{\it E-mails:} }\texttt{yangshuhui@student.cumtb.edu.cn} (S. Yang)

\noindent\phantom{{\it E-mails:} }\texttt{linyan@cumtb.edu.cn} (Y. Lin)
\end{document}